\documentclass[11pt]{article}

\usepackage[left=1in,top=1in,right=1in,bottom=1in,head=.1in]{geometry}

\usepackage{pratik}

\newcommand{\titletext}{Optimal Ridge Regularization for Out-of-Distribution Prediction}

\setcounter{footnote}{1}

\author{
    Pratik Patil\footremember{berkeleystats}{Department of Statistics, University of California, Berkeley, CA 94720, USA.} \\ {\small \texttt{pratikpatil@berkeley.edu}} 
    \and
    Jin-Hong Du\footremember{cmustats}{Department of Statistics and Data Science, Carnegie Mellon University, Pittsburgh, PA 15213, USA.}\footremember{cmumld}{Machine Learning Department, Carnegie Mellon University, Pittsburgh, PA 15213, USA.} \\ {\small \texttt{jinhongd@andrew.cmu.edu}} 
    \and
    Ryan {J.} Tibshirani\footrecall{berkeleystats} \\ {\small \texttt{ryantibs@berkeley.edu}}
}

\date{\vspace{-20pt}}

\begin{document}

\maketitle

\begin{abstract}
We study the behavior of optimal ridge regularization and optimal ridge risk for out-of-distribution prediction, where the test distribution deviates arbitrarily from the train distribution. We establish general conditions that determine the sign of the optimal regularization level under covariate and regression shifts. These conditions capture the alignment between the covariance and signal structures in the train and test data and reveal stark differences compared to the in-distribution setting. For example, a negative regularization level can be optimal under covariate shift or regression shift, even when the training features are isotropic or the design is underparameterized. Furthermore, we prove that the optimally tuned risk is monotonic in the data aspect ratio, even in the out-of-distribution setting and when optimizing over negative regularization levels. In general, our results do not make any modeling assumptions for the train or the test distributions, except for moment bounds, and allow for arbitrary shifts and the widest possible range of (negative) regularization levels.
\end{abstract}

\section{Introduction}
\label{sec:introduction}

Regularization plays a crucial role in statistical modeling and is commonly incorporated into optimization-based models through a regularization term. 
Its effectiveness relies on properly scaling the regularization term, which is controlled by a penalty parameter that the data scientist needs to tune.
Recent work in machine learning (precise references given shortly) has shed light on some rather surprising behavior exhibited by the optimal regularization level in overparameterized prediction models, which can be zero or even negative in certain problems with moderate signal-to-noise ratio and high dimensionality. 
This stands in contrast to the ``typical'' behavior from classical low-dimensional learning theory.

With this motivation, our paper focuses on two key questions for high-dimensional ridge regression:

\begin{enumerate}
    \item[(\Qsnoparen{1})] What is the behavior of the \emph{optimal ridge penalty}, as a function of parameters such as signal-to-noise ratio, data aspect ratio, feature correlations, and signal structure?
    \item[(\Qsnoparen{2})] What is the behavior of the \emph{optimally tuned ridge risk}, as a function of these same problem parameters?
\end{enumerate}

To set the notation, let $(\bx_i, y_i)$ for $i \in [n]$ be i.i.d.\ observations in $\RR^{p} \times \RR$ representing the feature vector and response. 
Denote the data matrix as $\bX=[\bx_1,\dots,\bx_n]^{\top} \in \RR^{n \times p}$ and the associated response vector as $\by=[y_1,\dots,y_n]^{\top} \in \RR^n$.
Given a ridge penalty $\lambda > 0$, recall the ridge regression fits:
\begin{equation}
    \label{eq:ridge_optprob}
    \hat{\bbeta}^\lambda = \argmin_{\bb \in \RR^p} \,
    \|\by - \bX \bb\|_2^2 / n + \lambda \|\bb\|_2^2.
\end{equation}
Ridge regression \citep{hoerl_kennard_1970_1,hoerl_kennard_1970_2} has received considerable recent attention, particularly in settings involving overparameterization, such as double descent \citep[see, e.g.,][and references therein]{belkin_hsu_xu_2020, hastie2022surprises, muthukumar_vodrahalli_subramanian_sahai_2020} and benign overfitting \citep{bartlett_long_lugosi_tsigler_2020,koehler2021uniform,mallinar2022benign}.
This interest in ridge regression, especially its ``ridgeless'' limit, where $\lambda \to 0^+$, owes to its peculiar double/multiple descent risk behavior in overparameterized regimes, which (on the surface) challenges the conventional understanding of the role of regularization \citep{hastie2020ridge}.
By defining the ridge estimator as:
\begin{equation}
    \label{eq:ridge}
    \hat{\bbeta}^\lambda 
    = (\bX^\top \bX / n + \lambda \bI_p)^{\dagger} \bX^\top \by / n
    = \bX^\top (\bX \bX^\top / n + \lambda \bI_n)^{\dagger} \by / n, 
\end{equation}
where $\bA^{\dagger}$ denotes the Moore-Penrose pseudoinverse of $\bA$, we simultaneously accommodate the case of $\lambda > 0$, in which case \eqref{eq:ridge} reduces to the unique ridge predictor obtained using \eqref{eq:ridge_optprob}, and the case of $\lambda= 0$, in which case \eqref{eq:ridge} becomes the minimum $\ell_2$-norm interpolator among many solutions to problem \eqref{eq:ridge_optprob} when $p \ge \rank({\bX}) = n$.
Note that the above definition \eqref{eq:ridge} is well defined even when $\lambda < 0$.
For more background on the formulation of ridge regression with $\lambda < 0$, see \Cref{sec:introduction-details}.

Partial answers to questions \Qs{1} and \Qs{2} are known for the \emph{in-distribution} squared prediction risk, defined as:
\begin{equation}
    \label{eq:ridge_prederr}
    R(\hbeta^\lambda)
    = \EE_{\bx_0, y_0} [ ( \bx_0^\top \hat{\bbeta}^\lambda - y_0 )^2 \mid \bX, \by ],
\end{equation}
where $(\bx_0, y_0)$ is a test point sampled independently from the \emph{same} distribution $P_{\bx, y}$ as the training data.
Note that the prediction risk \eqref{eq:ridge_prederr} is conditional on the training data and is a function of $(\bX, \by)$ and the properties of $P_{\bx,\by}$.
Arguably, the cleanest answers to \Qs{1} and \Qs{2} are obtained under proportional asymptotics where the sample size $n$ and the feature size $p$ diverge proportionally to an \emph{aspect ratio} $\phi\in(0,\infty)$.
Under certain additional assumptions, the risk \eqref{eq:ridge_prederr} then almost surely converges to a limit $\sR(\lambda, \phi)$ that depends only on coarse properties of $P_{\bx, y}$.
Analyzing the behavior of the optimal ridge penalty $\lambda^*$ (which minimizes $\sR(\lambda, \phi)$ over $\lambda$) and the optimal risk $\sR(\lambda^{*}, \phi)$ then consequently allows us to answer questions \Qs{1} and \Qs{2}, respectively.

For \Qs{1}, consider a well-specified linear model $\by = \bX \bbeta + \bvarepsilon$, where the noise $\bepsilon \sim (\bm{0}_n, \sigma^2 \bI)$ is independent of $\bX$, and the signal is random with $\bbeta \sim(\zero_p, (\alpha^2/p) \bI)$. 
Remarkably, despite the lack of a closed-form expression for $\sR(\lambda^{*}, \phi)$, \citet{dobriban_wager_2018} show that $\lambda^{*} = \phi/\SNR > 0$, where $\SNR = \alpha^2/\sigma^2$ is the signal-to-noise ratio.
However, in real-data experiments, it has been observed that a negative ridge penalty can be optimal \citep{kobak_lomond_sanchez_2020}.
Motivated by this, \citet{wu_xu_2020,richards_mourtada_rosasco_2021} analyze the sign behavior of $\lambda^*$ beyond random isotropic signals and establish sufficient conditions for when $\lambda^{*}<0$ or $\lambda^{*}=0$. 

For \Qs{2}, again remarkably, it follows from the results of \citet{dobriban_wager_2018} that for isotropic features and random isotropic signals, the risk $\sR(\lambda, \phi)$ increases monotonically with the data aspect ratio $\phi$.
Recent work by \citet[Theorem 6]{patil2023generalized} extends this result to anisotropic features and deterministic signals (with arbitrary response distributions of bounded moments), demonstrating that optimal ridge regression exhibits a monotonic risk profile and avoids double and multiple descents.
In a certain sense, these results imply the sample-wise monotonicity of the optimal expected conditional risk (over the training data set).

We work to answer \Qs{1}-\Qs{2} more comprehensively across essentially all possible in-distribution ridge prediction problems. 
Furthermore, we will generalize this by also considering the \emph{out-of-distribution} (OOD) setting, where the test point $(\bx_0, y_0)$ in \eqref{eq:ridge_prederr} has a different distribution $P_{x_0,y_0}$ than the train distribution $P_{\bx,y} = P_{\bx} \cdot P_{y \mid \bx}$. 
Distribution shift occurs in many practical machine learning applications and has gained increasing attention in the learning community.
We focus on two common types of distribution shifts:
\begin{itemize}[nosep]
    \item[(i)] 
    \emph{Covariate shift}: where $P_{\bx_0} \neq P_{\bx}$ but $P_{y_0 \mid \bx_0} = P_{y \mid \bx}$.
    \item[(ii)]
    \emph{Regression shift}: where $P_{y_0 \mid \bx_0} \neq P_{y \mid \bx}$ but $P_{\bx_0} = P_{\bx}$.
\end{itemize}
Thus, we also study following generalizations of \Qs{1} and \Qs{2}:

\begin{enumerate}
    \item[(\Qsnoparen{1'})]
    \emph{How does distribution shift alter optimal regularization?}
    Specifically, under what types of shift is the optimal penalty $\lambda^*$ in the OOD setting notably different (typically smaller) compared to the in-distribution setting?
    \item[(\Qsnoparen{2'})]
    \emph{How does distribution shift alter optimal risk behavior?}
    Specifically, is $\sR(\lambda^{*}, \phi)$ still monotonic in $\phi$ when $\lambda^{*}$ changes due to the distribution shift?
    Conversely, is optimal regularization \emph{necessary} to ensure monotonic risk?
\end{enumerate}

\subsection{Summary of Results and Paper Outline}

\textbf{Extended risk characterization.}
In \Cref{sec:preliminaries}, we extend the scope of risk characterization for ridge regression for the out-of-distribution setting (\Cref{prop:ood-risk-asymptotics}) that: 
(i) does not assume any model for the train or the test distribution, apart from certain moment bounds on the train and test response distributions,
(ii) does not assume that the spectrums of feature covariance or signal projections converge to fixed distributions, 
and (iii) allows for the widest possible range of (negative) regularization level $\lambda_{\min}$ (see \Cref{def:lower_bound_negative_regularization}).

\textbf{Properties of optimal regularization.}
In \Cref{sec:optimal_regularization_signs}, we characterize the conditions that determine the sign of $\lambda^*$ under covariate shift (\Cref{thm:stationary-point-cov-shift}) and regression shift (\Cref{thm:stationary-point-sig-shift}). 
These conditions capture the general alignment between the signal and the covariance spectrum, which isolates the cases where the sign of $\lambda^*$ under the OOD setting changes from the in-distribution setting.
Our work subsumes and extends previously known results on optimal ridge regularization to the best of our knowledge (see \Cref{tab:opt-reg} for precise comparisons).

\textbf{Properties of optimal risk.}
In \Cref{sec:optimal_risk_monotonicity}, we show that the OOD risk of the optimally tuned ridge $\sR(\lambda^{*}, \phi)$ is monotonic in the data aspect ratio $\phi$ and $\SNR$ (\Cref{thm:monotonicty}).
Furthermore, we establish a partial converse (\Cref{thm:nonmonotonicity}) that shows risk non-monotonicity under suboptimal regularization.  
To prove our results, we exploit the equivalences between subsampling and ridge equivalences of \cite{patil2023generalized} to the OOD setting and also allow for tuning over negative regularization (\Cref{thm:negative-lam}).

\subsection{Related Work and Comparisons}
\label{sec:related_work}

\begin{table*}[!t]
    \centering
    \caption{
    \textbf{Optimal regularization landscape in ridge regression.}
    Here, `\iso' indicates either an isotropic feature or signal covariance, and `\noniso' indicates anisotropic features or signal covariance.
    For the data aspect ratio $\phi$, `all' indicates $\phi\in(0,\infty)$, `under' indicates $\phi\in(0,1)$ for the underparameterized regime, and `over' indicates $\phi\in(1,\infty)$ for the overparameterized regime.
    For the minimum penalty $\lambda_{\min}$, `neg' and `\negg' respectively indicate the naive (loose) and exact lower bound on the negative values (see \Cref{def:lower_bound_negative_regularization}).
    For the optimal penalty $\lambda^*$, \smash{\setlength{\fboxsep}{0pt}\colorbox{green!10}{green\vphantom{d}{}}} and \smash{\setlength{\fboxsep}{0pt}\colorbox{red!10}{red\vphantom{g}{}}} contrast the cases where the sign changes.
    `Arb. Mod.', `Arb. SNR.', and `Arb. Spec.' indicate allowing for arbitrary response model, signal-to-noise ratio, and feature covariance spectrum, respectively.
    (See \Cref{tab:ref} for the reference abbreviation key.)
    }\scriptsize%
    \label{tab:opt-reg}
    \begin{tabularx}{\textwidth}{C{0.25cm}C{0.25cm}C{0.25cm}C{0.25cm}C{0.75cm}C{0.5cm}C{0.45cm}C{0.45cm}C{0.45cm}L{4.5cm}C{0.18cm}C{2.95cm}}
        \toprule
        $\bSigma$
        & $\bbeta$ & $\bSigma_0$
        & $\bbeta_0$
        & $\phi \lessgtr 1$ & $\lambda_{\min}$ & \textbf{Arb. Mod.} & \textbf{Arb. SNR} & \textbf{Arb. Spec.} & \textbf{Additional Specific Data Geometry Conditions}
        & \textbf{$\lambda^*$}
        &  \textbf{Reference} \\
        \midrule
        \multicolumn{4}{c}{In-distribution} &&&&&&&&\\ 
         \noniso
        & \iso
        & $\bSigma$
        & $\bbeta$
        & all & zero & \xmark & \cmark & \xmark & \cellcolor{lightgray!15} & + &
        \colorrefold[\hyperlink{ref:DW}{DW}, Thm. 2.1]%
        \\ 
        \colorgray\iso & \colorgray\noniso & \colorgray$\bSigma$ & \colorgray$\bbeta$ & \cellcolor{green!10}all & zero & \xmark & \cmark & \xmark &  \cellcolor{lightgray!15} & \cellcolor{green!10}+ &\colorrefold[\hyperlink{ref:HMRT}{HMRT}, Cor. 5] \\
        \arrayrulecolor{black!10} \cmidrule(lr){5-12}
         &&&& \cellcolor{green!10}under & neg & \xmark & \cmark & \xmark & \cellcolor{lightgray!15} & \cellcolor{green!10}+ &
        \colorrefold[\hyperlink{ref:WX}{WX}, Prop. 6] \\
         &  &  &  & over & neg & \xmark & \xmark & \xmark & Strict misalignment of $(\bSigma, \bbeta)$ & + &
        \colorrefold[\hyperlink{ref:WX}{WX}, Thm. 4] \\
         &  &  &  & over & neg& \xmark & \xmark & \xmark & \multirow{2}{4.cm}{Strict alignment of $(\bSigma, \bbeta)$ and/or special feature model} & $-$ & \colorrefold[\hyperlink{ref:WX}{WX}, Thm. 4,\,Prop. 7]
        \\ 
         &  &  &  & over & zero & \xmark & \xmark & \xmark & & $0$ & \colorrefold[\hyperlink{ref:RMR}{RMR}, Cor. 2] 
        \\ 
        \arrayrulecolor{black!10} \cmidrule(lr){5-12}
        &  &  &  & under & \negg & \cmark & \cmark & \cmark & \cellcolor{lightgray!15} & + & \colorrefnew \textbf{\Cref{thm:stationary-point-no-shift}~\eqref{thm:stationary-point-no-shift-item-underparameterized}}\\
        \multirow{-6}{*}{\noniso} & 
        \multirow{-6}{*}{\noniso} & 
        \multirow{-6}{*}{$\bSigma$} & 
        \multirow{-6}{*}{$\bbeta$}
        & over & \negg & \cmark & \cmark & \cmark & General alignment of $(\bSigma,\bbeta,\sigma^2)$ & $-$  & \colorrefnew \textbf{\Cref{thm:stationary-point-no-shift}~\eqref{thm:stationary-point-no-shift-item-overparameterized} }\\
        
        \arrayrulecolor{black!50}\midrule
        \arrayrulecolor{black}\multicolumn{4}{c}{Out-of-distribution} \\
        \noniso & \iso & $\bSigma_0$
        & $\bbeta$
        & all & \negg & \cmark & \cmark & \cmark & \cellcolor{lightgray!15} & + &
        \colorrefnew \textbf{\Cref{thm:stationary-point-cov-shift-iso}} \\
        \arrayrulecolor{black!10} \cmidrule(lr){5-12}
        \colorgray\noniso & \colorgray\noniso & \colorgray$\bSigma_0$ & \colorgray$\bbeta$ & under & \negg & \cmark & \cmark & \cmark &  \cellcolor{lightgray!15} & + & \colorrefnew \textbf{\Cref{thm:stationary-point-cov-shift}~\eqref{thm:stationary-point-cov-shift-item-underparameterized}} \\
        \noniso & \noniso & $\bI$ & $\bbeta$ & over & \negg & \cmark & \cmark & \cmark & \cellcolor{lightgray!15} & $+$ &  \colorrefnew \textbf{\Cref{thm:stationary-point-cov-shift}~\eqref{thm:stationary-point-cov-shift-item-overparameterized-estimation-risk}} \\
        \colorgray\iso & \colorgray\noniso & \colorgray$\bSigma_0$ & \colorgray$\bbeta$ & \cellcolor{red!10}over & \negg & \cmark & \cmark & \cmark & General alignment of ($\bSigma_0,\bbeta,\sigma^2)$ & \cellcolor{red!10}
        $-$ & \colorrefnew \textbf{\Cref{thm:stationary-point-cov-shift}~\eqref{thm:stationary-point-cov-shift-item-overparameterized}} \\
        \arrayrulecolor{black!10} \cmidrule(lr){5-12}
        \multirow{3}{*}{\noniso}
        & \multirow{3}{*}{\noniso}
        & \multirow{3}{*}{$\bSigma$}
        & \multirow{3}{*}{$\bbeta_0$}
        & \cellcolor{red!10}under & \negg & \cmark & \cmark & \cmark & General alignment of ($\bSigma, \bbeta,\bbeta_0$) & \cellcolor{red!10} $-$  & \colorrefnew\textbf{\Cref{thm:stationary-point-sig-shift}~\eqref{thm:stationary-point-sig-shift-item-underparameterized}}, \eqref{eq:sig-shift-gen-cond}\\
        & & & & under & \negg & \cmark & \cmark & \cmark & General misalignment of ($\bSigma, \bbeta,\bbeta_0$)  & $+$ & \colorrefnew\textbf{\Cref{thm:stationary-point-sig-shift}~\eqref{thm:stationary-point-sig-shift-item-underparameterized}}, \eqref{eq:sig-shift-gen-cond}\\
        & & & & over & \negg & \cmark & \cmark & \cmark & General alignment of ($\bSigma, \bbeta,\bbeta_0,\sigma^2$) & $-$ & \colorrefnew\textbf{\Cref{thm:stationary-point-sig-shift}~\eqref{thm:stationary-point-sig-shift-item-overparameterized}}\\
        \arrayrulecolor{black} \bottomrule
    \end{tabularx}
    \label{tab:opt_reg}
\end{table*}

\textbf{Ridge risk characterization.}
The asymptotic risk of ridge regression has been extensively studied in the literature under proportional asymptotics when $p/n \to \phi \in (0, \infty)$, as $n,p \to \infty$ using tools from random matrix theory and statistical physics.
For well-specified linear models, expressions of risk asymptotics for the in-distribution setting are obtained by \citet{dobriban_wager_2018,hastie2022surprises}, among others. 
Historically, heuristic derivations of these expressions have also been derived for Gaussian process regression by \citet{sollich2001gaussian}.
Additionally, several works have explored risk asymptotics and its implications in different variants of ridge regression \citep{wei_hu_steinhardt,mel21arbitrary,loureiro21learning,jacot2020kernel,simon2021eigenlearning, zhou2023optimistic,bach2023high,pesce2023gaussian}.
The risk asymptotics for the OOD setting are obtained by \citet{canatar2021out}, \citet[Section E.5]{d2022underspecification}, \citet[Section S.6.5]{patil2022mitigating}, \citet{tripuraneni2021covariate}.
However, these works assume either random Gaussian features or a well-specified linear model or restrict to only the positive range of regularization.
Our work extends this literature by allowing for general response models and the widest possible range of negative regularization.

\textbf{Behavior of optimal regularization.}
Under random signals with isotropic covariance, \citet{dobriban_wager_2018} show that the asymptotic risk over the positive range of regularization $\lambda>0$ is minimized at $\lambda^\star = \phi /\SNR$.
Here $\SNR = \alpha^2/\sigma^2$, where $\sigma^2$ is the noise energy and $\alpha^2$ is the signal energy.
Remarkably, this result is invariant of the feature covariance.
Similar results under Gaussian assumptions are derived by \citet{dicker2016ridge}, and \citet{han2023distribution} extend the result to most signals in the unit ball with high probability.
However, \citet{kobak_lomond_sanchez_2020} demonstrate that optimal regularization can be negative for certain signal and covariance structures in real datasets.
Motivated by these curious experiments, \citet{wu_xu_2020} provide sufficient conditions for optimal regularization of the Bayes risk under anisotropic feature covariance and random signal, assuming a limiting spectrum distribution of the covariance matrix $\bSigma$ and alignment conditions between the eigenvalues of $\bSigma$ and the projections of the signal $\bbeta$ onto the eigenspace of $\bSigma$.
Furthermore, \citet{richards_mourtada_rosasco_2021} consider strict alignment conditions for a special feature model but do not explicitly consider negative regularization.
We refer to these conditions as \emph{strict (mis)alignment} conditions in this paper.

Our paper extends the scope of the aforementioned results for the OOD setting with general response models and allows for the widest possible range of negative regularization.
The main differences to \citet{wu_xu_2020} are the assumptions (linear models and limiting spectrum distribution) and the hypothesis of the theorem (aligned/misaligned). 
Their analysis only considers high \SNR regimes and analyzes the behavior of optimal regularization for bias when $\phi > 1$, although they also provide an upper bound for the noise level under special cases.
We provide generic sufficient conditions.
We call these \emph{general (mis)alignment} conditions in the paper.
\Cref{tab:opt-reg} provides a detailed comparison summary.

\textbf{Behavior of optimal risk.}
Under a random isotropic signal, \citet{dobriban_wager_2018} obtain the expression for limiting optimal risk, which can be shown to be monotonic in $\phi$.
\citet{patil2023generalized} extends these theoretical guarantees to features with an arbitrary covariance matrix and a general moment-bounded response. 
However, their analysis is limited to in-distribution prediction risk and positive regularization.
Recent works have also explored other aspects of the monotonicity of optimal risk; see, for example, \citet{nakkiran2021optimal,simon2023better,yang2023dropout}.

We extend the monotonicity result in \citet{patil2023generalized} to the OOD setting and allow for negative regularization.
In addition, we also show the monotonicity of the optimal risk in $\SNR$.
The proof technique for risk monotonicity leverages the equivalences between subsampling and ridge regularization established by \citet{du2023gcv,patil2023generalized}. 
However, the equivalences in these works only consider cases where the ridge penalty is non-negative.
We extend their analyses to accommodate negative regularization, which requires extending the properties of the parameters that appear in certain fixed-point equations under negative regularization (see \Cref{sec:analytic-properties-fp-sols}).

\section{Out-of-Distribution Risk Asymptotics}
\label{sec:preliminaries}

Before describing the properties of $\lambda^*$ in \Cref{sec:optimal_regularization_signs} and the behavior of optimal risk $\sR(\hat{\bbeta}^{\lambda^*})$ in \Cref{sec:optimal_risk_monotonicity}, we provide the risk asymptotics in this section.
For the reader's convenience, we summarize all our notation in \Cref{sec:organization_notation_supp}.
We state assumptions on the train and test distributions below.

\subsection{Data Assumptions}

We first define a general feature and response distribution, which we will use in our subsequent assumption shortly.

\begin{definition}
    [General feature and response distribution]
    \label{def:dist}
    For $\bx \sim P_{\bx}$, it can be decomposed as $\bx = \bSigma^{1/2}\bz$, where $\bz \in \RR^{p}$ contains i.i.d. entries with mean $0$, variance $1$, and $(4+\mu)$-th moment uniformly bounded for some $\mu > 0$.
    Here $\bSigma \in \RR^{p \times p}$ is deterministic and symmetric with eigenvalues uniformly bounded away from $0$ and $+\infty$. 
    For $y \sim P_{y}$, it has mean $0$ and $(4+\nu)$-th moment uniformly bounded for some $\nu > 0$. 
    The $L_2$ linear projection parameter of $y$ onto $\bx$ is denoted by $\bbeta = \EE[\bx \bx^{\top}]^{-1} \EE[\bx y]$, and the variance of the conditional distribution $P_{y \mid \bx}$ is denoted by $\sigma^2$. 
    The joint distribution $P_{\bx,y}$ is parameterized by $(\bSigma, \bbeta, \sigma^2)$.
\end{definition}

\Cref{def:dist} imposes weak moment assumptions on covariates and responses, which are commonly used in random matrix theory and overparameterized risk analysis \citep{hastie2022surprises,bartlett_montanari_rakhlin_2021}. 
These assumptions encode a wide class of distributions over $\RR^{p+1}$.
By decomposing $\rd P_y = \rd P_{\bx} \cdot \rd P_{y\mid\bx}$, we can express the response as $y=\bx^{\top}\bbeta + \varepsilon$, where $\varepsilon$ is uncorrelated with $\bx$ and $\EE[\varepsilon^2]=\sigma^2$.
Note that the $L_2$ projection parameter $\bbeta$ minimizes the linear regression error \citep{gyorfi_kohler_krzyzak_walk_2006,buja2019models_1,buja2019models_2}\footnote{Technically, our results can accommodate the conditional variance $\sigma^2$ depending on $\bx$ with suitable regularity conditions, but for simplicity, we do not consider this variation in the current paper.}.
Also note that this formulation does not impose any specific structure on the conditional distribution $P_{y \mid \bx}$ and does not imply that $(\bx,y)$ follows a linear model, as $\varepsilon$ is also a function of $\bx$.
It is possible to further relax the assumption on the feature vector $x$ to only require an appropriate convex concentration (that proves versions of the Marchenko-Pastur law) \citep{louart2022sharp,cheng2022dimension} or even certain infinitesimal asymptotic freeness between the population covariance matrix $\bSigma$ and the sample covariance matrix $\bX^\top \bX / n$ \citep{lejeune2022asymptotics,patil2023asymptotically}.
We do not consider such relaxations here.

Under \Cref{def:dist}, the joint distribution $P_{\bx,y}=P_{\bx,y}(\bSigma, \bbeta, \sigma^2)$ is parameterized by $(\bSigma, \bbeta, \sigma^2)$.
We next state assumptions on the train and test distributions in terms of these distributions, allowing for different sets of parameters.

\begin{assumption}[Train and test distributions]\label{asm:train-test}     
    Assume that $P_{x, y}$ and $P_{x_0,y_0}$ are distributed according to \Cref{def:dist}, parameterized by $(\bSigma, \bbeta, \sigma^2)$ and $(\bSigma_0, \bbeta_0, \sigma_0^2)$, respectively.
\end{assumption}

In this paper, we consider the following types of shifts:
\begin{itemize}[nosep]
    \item[(i)] 
    \emph{Covariate shift}: where $\bSigma \neq \bSigma_0$ but $(\bbeta,\sigma)=(\bbeta_0,\sigma_0)$.
    \item[(ii)]
    \emph{Regression shift}: where $\bSigma = \bSigma_0$ but $(\bbeta,\sigma) \neq (\bbeta_0,\sigma_0)$.
    \item[(iii)] 
    \emph{Joint shift}: where $\bSigma \neq \bSigma_0$ and $(\bbeta,\sigma) \neq (\bbeta_0,\sigma_0)$.
\end{itemize}
Observe that this framework also encompasses various risk notions (even for the in-distribution setting), including the \emph{estimation risk}, which arises when $(\bSigma_0,\bbeta_0,\sigma_0^2)=(\bI,\bbeta,0)$.

\subsection{Out-of-Distribution Risk Asymptotics}

In this section, we obtain the asymptotic risk of ridge regression in the OOD setting.
Most of the papers on ridge regression consider the range of regularization $\lambda \geq 0$.
Motivated by empirical findings in \citet{kobak_lomond_sanchez_2020} that negative regularization can be optimal in real datasets, some recent works consider negative regularization; see, e.g., \citet{wu_xu_2020,patil2021uniform,patil2022mitigating}, using a naive lower bound of $- r_{\min} (1 - \sqrt{\phi})^2$ for $\lambda$.
A tighter lower bound can be obtained from Theorem 3.1 of \citet{lejeune2022asymptotics}, which provides an explicit characterization of the smallest nonzero eigenvalue of Wishart-type matrices.
This bound is derived by explicitly identifying the analytic continuation to the real line of a unique solution to a certain fixed-point equation over the (upper) complex half-plane \citep{silverstein1995analysis,dobriban2015efficient}.
The new bound can significantly outperform the previous naive lower bound (see Figure 1 of \cite{lejeune2022asymptotics}).

\begin{definition}[Lower bound on negative regularization]
    \label{def:lower_bound_negative_regularization}
  Let $\mu_{\min} \in \RR$ be the unique solution, satisfying $\mu_{\min} > -r_{\min}$, to the equation:
\begin{equation}
    1 = \phi \otr[\bSigma^2 (\bSigma + \mu_{\min} \bI)^{-2}],
\end{equation}
and let $\lambda_{\min}(\phi)$ be given by:
\begin{equation}\label{eq:lam_min}
    \lambda_{\min}(\phi) 
    = \mu_{\min} - \phi \otr[\bSigma (\bSigma + \mu_{\min} \bI)^{-1}].
\end{equation}
  
\end{definition}
This enables feasible risk estimation over $\lambda\in (\lambda_{\min}(\phi),\infty)$.
Here $\otr[\bA]$ denotes the average trace $\tr[\bA] / p$ of a matrix $\bA \in \RR^{p \times p}$.
To reiterate, the difference between the bound \eqref{eq:lam_min} and the naive bound used in \citet{wu_xu_2020,patil2021uniform} can be significant, as seen in \Cref{fig:negative_optimal_our_condition} as well.

To characterize the asymptotic out-of-distribution (OOD) risk in \Cref{prop:ood-risk-asymptotics}, we first define the non-negative constants $\mu=\mu(\lambda, \phi)$ and $\tv=\tv(\lambda, \phi; \bSigma_0)$ as solutions of the following fixed-point equations:
{
\begin{align*}
    \mu
    = \lambda + \phi\otr[\mu\bSigma (\bSigma + \mu\bI)^{-1}],
    \quad
    \text{and}
    \quad
    \tv
    = \frac{\phi \otr[\bSigma_0\bSigma(\bSigma+\mu\bI)^{-2}]}{1-\phi \otr[\bSigma^2(\bSigma + \mu\bI)^{-2}]}.
\end{align*}
}
One can interpret $\mu$ as the level of implicit regularization ``self-induced'' by the data \citep{bartlett_montanari_rakhlin_2021,misiakiewicz2023six}.
Alternatively, it is also common to parameterize the equations using its inverse $v(\lambda, \phi) = \mu^{-1}(\lambda, \phi)$, which corresponds to the Stieltjes transform of the sample Gram matrix in the limit. With this notation in place, we can now extend the result in Eq. (11) of \citet{patil2023generalized} to the OOD setting as formalized below.

\begin{restatable}
    [Deterministic equivalents for OOD risk]
    {proposition}
    {PropOodRiskAsymptotics}
    \label{prop:ood-risk-asymptotics}
    Under \Cref{asm:train-test}, as $n,p\rightarrow\infty$ such that $p/n\rightarrow\phi\in(0,\infty)$ and $\lambda\in(\lambda_{\min}(\phi),\infty)$, the prediction risk $R(\hbeta^\lambda)$ defined in \eqref{eq:ridge_prederr} admits a deterministic equivalent $R(\hat{\bbeta}^{\lambda}) \asympequi \sR(\lambda, \phi)$, where the equivalent additively decomposes into:
    \begin{align}
         \sR(\lambda, \phi) :=\sB(\lambda, \phi) + \sV(\lambda, \phi)  + \sE(\lambda, \phi) + \kappa^2, \label{eq:risk-det-equiv}
    \end{align}
    with the following deterministic equivalents for the bias, variance, regression shift bias, and irreducible error:
    \begin{align*}
        \sB &= \mu^2\cdot \bbeta^{\top}(\bSigma+\mu\bI)^{-1}(\tv\bSigma+\bSigma_0) (\bSigma+\mu\bI)^{-1}\bbeta,
        \\
        \sV &= \sigma^2\tv, 
        \\
        \sS &= 2\mu \cdot \bbeta^\top (\bSigma + \mu\bI)^{-1} \bSigma_0 ( {\bbeta}_0 - \bbeta ), 
        \\
        \kappa^2 &= (\bbeta_0 - \bbeta)^\top \bSigma_0 (\bbeta_0 - {\bbeta}) + \sigma_0^2. 
    \end{align*}
\end{restatable}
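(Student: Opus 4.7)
The plan is to follow the standard bias-variance-shift decomposition of $R(\hat{\bbeta}^\lambda)$, reduce each piece to a functional of the training resolvent, invoke anisotropic Marchenko-Pastur-type deterministic equivalents weighted by $\bSigma_0$, and finally collect terms using the fixed-point equations for $\mu$ and $\tv$. Applying \Cref{def:dist} to the test pair, write $y_0 = \bx_0^\top \bbeta_0 + \varepsilon_0$ with $\varepsilon_0$ uncorrelated with $\bx_0$ and $\EE[\varepsilon_0^2] = \sigma_0^2$, so that integrating out $(\bx_0, y_0)$ in \eqref{eq:ridge_prederr} gives $R(\hat{\bbeta}^\lambda) = (\hat{\bbeta}^\lambda - \bbeta_0)^\top \bSigma_0 (\hat{\bbeta}^\lambda - \bbeta_0) + \sigma_0^2$. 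Similarly write $\by = \bX\bbeta + \bvarepsilon$ on the training side, and, setting $\mathbf{M}(\lambda) := (\hat{\bSigma} + \lambda \bI)^{-1}$ with $\hat{\bSigma} := \bX^\top \bX/n$, use the identity $\mathbf{M}\hat{\bSigma} = \bI - \lambda \mathbf{M}$ to write $\hat{\bbeta}^\lambda - \bbeta_0 = (\bbeta - \bbeta_0) - \lambda \mathbf{M}\bbeta + \mathbf{M}\bX^\top \bvarepsilon/n$. Expanding the $\bSigma_0$-weighted quadratic form produces six pieces: the deterministic piece $(\bbeta - \bbeta_0)^\top\bSigma_0(\bbeta - \bbeta_0)$ (contributing to $\kappa^2$); the shift cross term $2\lambda \bbeta^\top \mathbf{M}\bSigma_0(\bbeta_0 - \bbeta)$ (giving $\sS$ in the limit); the pure bias $\lambda^2\bbeta^\top \mathbf{M}\bSigma_0 \mathbf{M}\bbeta$ (giving $\sB$); two $\bvarepsilon$-linear cross terms; and the variance quadratic $\bvarepsilon^\top \bX \mathbf{M}\bSigma_0 \mathbf{M}\bX^\top \bvarepsilon/n^2$.

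The two $\bvarepsilon$-linear cross terms have mean zero and vanish almost surely by a Borel-Cantelli argument using the $(4+\nu)$-th moment bound on $y$ from \Cref{def:dist}, and the variance quadratic concentrates around its trace $\sigma^2\tr[\bSigma_0 \mathbf{M}\hat{\bSigma}\mathbf{M}]/n$ by a Hanson-Wright-type lemma for uncorrelated (not necessarily independent) entries with bounded higher moments, in the spirit of the quadratic-form lemmas of \citet{hastie2022surprises,patil2023generalized}. This reduces the problem to three anisotropic, $\bSigma_0$-weighted resolvent equivalents: a first-order one of the form $\lambda \bbeta^\top \mathbf{M}\bSigma_0 \mathbf{u} \asympequi \mu \bbeta^\top (\bSigma+\mu\bI)^{-1}\bSigma_0 \mathbf{u}$ for bounded deterministic $\mathbf{u}$; and two second-order ones matching $\lambda^2\bbeta^\top \mathbf{M}\bSigma_0 \mathbf{M}\bbeta$ and $\sigma^2\tr[\bSigma_0 \mathbf{M}\hat{\bSigma}\mathbf{M}]/n$ to expressions in which $\tv$ appears through a derivative-of-resolvent identity (or, equivalently, through a leave-one-out calculation). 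These are the natural $\bSigma_0$-weighted generalizations of the in-distribution equivalents of \citet{patil2023generalized}; since $\bSigma_0$ enters only as a bounded linear weighting inside traces, no additional fixed-point equation is needed beyond the one for $\mu$, and the closed form for $\tv$ given in the statement falls out of the second-order identity. Summing the four surviving terms and simplifying with the defining equations for $\mu$ and $\tv$ reproduces $\sB + \sV + \sS + \kappa^2$ exactly.

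The main technical hurdle, and the only real novelty over the in-distribution derivation, is extending the above uniformly on compact subintervals of the full range $\lambda \in (\lambda_{\min}(\phi), \infty)$ of \Cref{def:lower_bound_negative_regularization}, including negative $\lambda$ close to the edge. There $\mathbf{M}(\lambda)$ is indefinite, and its operator norm diverges as $\lambda \downarrow \lambda_{\min}(\phi)$; the sharp edge characterization of \citet{lejeune2022asymptotics} that underlies \Cref{def:lower_bound_negative_regularization} is exactly what is needed to keep $\|\mathbf{M}(\lambda)\|_{\mathrm{op}} = O(1)$ on any compact subinterval of $(\lambda_{\min}(\phi), \infty)$ and to guarantee that $\mu(\lambda, \phi)$ extends analytically onto this real interval with $\mu > -r_{\min}$. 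With this control in hand, both the quadratic-form concentration of the previous step and the three resolvent equivalents extend from the upper complex half-plane by analytic continuation, following the uniform-in-$\lambda$ strategy of \citet{patil2021uniform,lejeune2022asymptotics}. This uniform extension to the sharp negative-regularization range is where I would concentrate the bulk of the detailed work in a full proof.
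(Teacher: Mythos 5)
Your proposal is correct and follows essentially the same route as the paper: the same bias--variance--shift decomposition of $R(\hat{\bbeta}^\lambda)$ around $\bbeta$, the same reduction to $\bSigma_0$-weighted resolvent functionals with $\mu$ and $\tv$ arising from the first- and second-order anisotropic equivalents, and the same emphasis that the real technical content for negative $\lambda$ is the analytic continuation of the fixed-point solution onto $(\lambda_{\min}(\phi),\infty)$ via the edge characterization of LeJeune et al.

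The one structural difference is modularity rather than substance. The paper does not re-derive the $\bSigma_0$-weighted second-order equivalents from scratch: it isolates the piece $(\hat{\bbeta}^\lambda-\bbeta)^\top\bSigma_0(\hat{\bbeta}^\lambda-\bbeta)$, feeds it directly into an already-general deterministic-equivalent result (Eq.~(11) of \citet{patil2023generalized}, extended to the OOD functional and to negative $\lambda$ in a helper lemma, by noting the fixed-point equation for $v$ is the same), and then handles only the cross term $(\hat{\bbeta}^\lambda-\bbeta)^\top\bSigma_0(\bbeta-\bbeta_0)$ by the first-order equivalent $\lambda(\hat{\bSigma}+\lambda\bI)^{-1}\asympequi\mu(\bSigma+\mu\bI)^{-1}$. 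Your six-term expansion reconstructs the same pieces directly; the paper's factoring saves work and makes the dependence on Patil--Du explicit. One small imprecision worth fixing: the $\bvarepsilon$-linear cross terms are not exactly mean zero at finite $n$, because $\bvarepsilon=\bfNL$ is a function of $\bX$ and so is not independent of $\mathbf{M}\bX^\top$; the vanishing instead follows from uncorrelation of $\bfNL$ with $\bx$ combined with a leave-one-out/truncation argument (this is exactly the content of Lemma~D.2 of \citet{patil2023generalized}, which the paper invokes). This does not change your conclusion, but a careful write-up should not lean on ``mean zero.''
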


Note that the deterministic equivalents presented in \Cref{prop:ood-risk-asymptotics} depend not only on the regularization parameters $(\lambda, \phi)$, but also on the problem parameters $(\bSigma, \bbeta, \sigma^2)$ and $(\bSigma_0, \bbeta_0, \sigma_0^2)$, which we have omitted for notational brevity.
Since the risk depends additively on $\sigma_0^2$, we focus mainly on the effect of $(\bSigma_0, \bbeta_0)$ in our analysis.
Extending this result to finite samples is possible by imposing additional distributional assumptions on the features and response.
Techniques in \citet{knowles2017anisotropic,cheng2022dimension,louart2022sharp}, among others, can be used to obtain non-asymptotic analogs of \Cref{prop:ood-risk-asymptotics}.
In this paper, we will focus only on the deterministic equivalents, which capture the first-order information (akin to expectation) of interest for our goals.

\section{Properties of Optimal Regularization}
\label{sec:optimal_regularization_signs}

In this section, we focus on the optimal ridge penalty $\lambda^*$ for the asymptotic out-of-distribution (OOD) risk, defined as\footnote{
Over the extended reals, there is at least one solution to \eqref{eq:opt-lam}.
In case there are multiple solutions $\lambda^*$ to the problem \eqref{eq:opt-lam}, the subsequent guarantees stated in the paper hold for any solution $\lambda^*$.}:
\begin{align}
    \lambda^* &\in \argmin_{\lambda\geq \lambda_{\min}(\phi)}\sR(\lambda,\phi) \label{eq:opt-lam}.
\end{align}
As discussed in \Cref{sec:related_work}, previous studies have explored the properties of $\lambda^*$ for ridge regression summarized in \Cref{tab:opt-reg}. 
However, these studies predominantly focus on specific scenarios, such as isotropic signals or features, and do not consider the full range of negative penalty values.
Furthermore, their investigations are restricted mainly to the in-distribution setting when $(\bSigma_0,\bbeta_0)=(\bSigma,\bbeta)$.
We broaden the scope of these results, considering more general scenarios, including anisotropic signals, the full range of (negative) regularization, and both in-distribution and OOD settings.

\subsection{In-Distribution Optimal Regularization}

We present our initial result for the in-distribution setting, which encompasses and extends the scope of previous works. 
Based on \Cref{prop:ood-risk-asymptotics}, we can characterize the properties of the optimal ridge penalty $\lambda^*$ defined in \eqref{eq:opt-lam} as follows.

\begin{restatable}
    [Optimal regularization sign for in-distribution risk]
    {theorem}
    {ThmStationaryPointNoShift}
    \label{thm:stationary-point-no-shift}
     Assume the setup of \Cref{prop:ood-risk-asymptotics} with $(\bSigma_0,\bbeta_0)=(\bSigma,\bbeta)$.
    \begin{enumerate}[leftmargin=4mm,itemsep=0pt,font=\normalfont]
        \item
        \label{thm:stationary-point-no-shift-item-underparameterized}
        \emph{(Underparameterized)} When $\phi<1$, we have $\lambda^*\geq 0$.

        \item
        \label{thm:stationary-point-no-shift-item-overparameterized}
        \emph{(Overparameterized)}
        When $\phi>1$, if for all $v < 1/\mu(0, \phi)$, the following general alignment holds:
        \begin{align}
            \frac{\otr[\bB\bSigma (v \bSigma + \bI)^{-2}] + \sigma^2 }{\otr[\bB\bSigma (v \bSigma + \bI)^{-3} ] + \sigma^2}
            &>
            \frac{\otr[\bSigma (v \bSigma + \bI)^{-2}]}{\otr[\bSigma (v \bSigma + \bI)^{-3}]},
            \label{eq:align-cond}
        \end{align}
        where $\bB=\bbeta\bbeta^{\top}$, then we have $\lambda^*< 0$.
    \end{enumerate}
\end{restatable}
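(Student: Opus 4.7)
The plan is to reparameterize the problem in terms of the implicit-regularization variable $\mu(\lambda,\phi)$, which cleanly separates the dependence on $\lambda$. Specializing \Cref{prop:ood-risk-asymptotics} to $(\bSigma_0,\bbeta_0) = (\bSigma,\bbeta)$ kills the regression-shift cross-term ($\sS \equiv 0$) and makes the irreducible error $\kappa^2 = \sigma_0^2$ independent of $\lambda$; since $1 + \tv = 1/(1 - \phi\,\otr[\bSigma^2(\bSigma+\mu\bI)^{-2}])$, the remaining bias-plus-variance contribution collapses to
\[
f(\mu) \;:=\; \frac{\mu^2\,\bbeta^\top \bSigma(\bSigma+\mu\bI)^{-2}\bbeta \;+\; \sigma^2\phi\, \otr[\bSigma^2(\bSigma+\mu\bI)^{-2}]}{1 - \phi\, \otr[\bSigma^2(\bSigma+\mu\bI)^{-2}]}.
\]
Implicit differentiation of the fixed-point equation for $\mu$ yields $d\lambda/d\mu = 1 - \phi\,\otr[\bSigma^2(\bSigma+\mu\bI)^{-2}]$, which is strictly positive on $(\mu_{\min},\infty)$, so $\lambda \mapsto \mu$ is a smooth strictly increasing bijection onto $(\mu_{\min},\infty)$ and the sign of $d\sR/d\lambda$ matches that of $df/d\mu$. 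It therefore suffices to locate the minimizer of $f$.

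For the underparameterized case ($\phi < 1$), the fixed-point equation at $\lambda=0$ forces $\mu = 0$, giving $f(0) = \sigma^2\phi/(1-\phi)$. A short derivative computation shows that both $\otr[\bSigma^2(\bSigma+\mu\bI)^{-2}]$ (starting from $1$ at $\mu=0$) and $\mu^2\,\bbeta^\top \bSigma(\bSigma+\mu\bI)^{-2}\bbeta$ (starting from $0$) are strictly decreasing in $\mu$ on $(\mu_{\min},0)$. Hence as $\mu$ decreases from $0$ toward $\mu_{\min}$ the numerator of $f$ strictly increases while the denominator $1 - \phi\,\otr[\bSigma^2(\bSigma+\mu\bI)^{-2}]$ strictly decreases (but remains positive, since $\phi A(\mu) < 1$ up to the boundary). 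Therefore $f(\mu) > f(0)$ on $(\mu_{\min},0)$, and the infimum of $f$ is attained at some $\mu \geq 0$, i.e., $\lambda^* \geq 0$.

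For the overparameterized case ($\phi > 1$), $\mu_0 := \mu(0,\phi) > 0$ satisfies $\otr[\bSigma(\bSigma+\mu_0\bI)^{-1}] = 1/\phi$. The plan is to compute $f'(\mu)$ by the quotient rule, apply the resolvent identity $\bSigma(\bSigma+\mu\bI)^{-2} - \mu\bSigma(\bSigma+\mu\bI)^{-3} = \bSigma^2(\bSigma+\mu\bI)^{-3}$ to simplify the numerator, then change variables to $v = 1/\mu$ via $(\bSigma+\mu\bI)^{-1} = v(v\bSigma+\bI)^{-1}$, and use the $v$-form of the fixed point $1 - \phi v\,\otr[\bSigma(v\bSigma+\bI)^{-1}] = \lambda v$ to replace the resolvent factor in the denominator. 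After simplification (and accounting for the $1/p$ normalization in $\otr$), the sign condition $f'(\mu) > 0$ reduces to
\[
\lambda\,(b_2 - b_3) \;+\; \phi\,\bigl[\, s_3 b_2 - s_2 b_3 \;-\; \sigma^2 (s_2 - s_3)\,\bigr] \;>\; 0,
\]
with $b_k := \otr[\bB\bSigma(v\bSigma+\bI)^{-k}]$ and $s_k := \otr[\bSigma(v\bSigma+\bI)^{-k}]$. For $\lambda > 0$ (equivalently $v < 1/\mu_0$), the first term is positive (since $b_2 > b_3$ whenever $v > 0$) and the bracket is exactly the hypothesized alignment condition \eqref{eq:align-cond}; hence $f'(\mu) > 0$ on $(\mu_0,\infty)$, so $\sR$ strictly increases on $(0,\infty)$, ruling out $\lambda^* \in (0,\infty)$. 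Passing to the limit $v \uparrow 1/\mu_0$ by continuity of the analytic map $v \mapsto \mathrm{LHS}-\mathrm{RHS}$ and invoking the strict inequality in \eqref{eq:align-cond} upgrades to $f'(\mu_0) > 0$, so $\sR'(0) > 0$; a first-order expansion then gives $\sR(\lambda) < \sR(0)$ for $\lambda < 0$ near $0$, yielding $\lambda^* < 0$.

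The main obstacle is the algebraic reduction in the overparameterized case: matching the raw quotient-rule derivative $f'(\mu)$ to the clean ratio form \eqref{eq:align-cond}. This requires the $\sigma^2$ contribution from the variance $\sV = \sigma^2\,\tv$ to be combined with the bias derivative so that it enters symmetrically as an additive term in both numerator and denominator of the left-hand ratio; using the fixed-point identity $1 - \phi v\,\otr[\bSigma(v\bSigma+\bI)^{-1}] = \lambda v$ at precisely the right moment is what isolates the hypothesized bracket. A secondary delicate point is upgrading $\lambda^* \leq 0$ to strict $\lambda^* < 0$ from an open-set hypothesis $v < 1/\mu(0,\phi)$, which is handled by continuity at the boundary $v = 1/\mu_0$ combined with the strictness of \eqref{eq:align-cond}.
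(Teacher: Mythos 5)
Your approach matches the paper's proof in both structure and algebra: reparameterize by $\mu$, collapse bias plus variance into the single ratio $f(\mu)$, and after the resolvent and fixed-point identities reduce the sign of $f'(\mu)$ to that of $\lambda(b_2-b_3) + \phi\big[s_3 b_2 - s_2 b_3 - \sigma^2(s_2-s_3)\big]$. The underparameterized part is sound.

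The overparameterized endpoint step has a genuine gap. You assert that passing $v \uparrow 1/\mu(0,\phi)$ ``by continuity of the analytic map $v \mapsto \mathrm{LHS}-\mathrm{RHS}$ and invoking the strict inequality in \eqref{eq:align-cond} upgrades to $f'(\mu_0) > 0$.'' Continuity cannot do this: a function strictly positive on an open set has only a non-strict sign at the closure's boundary. Writing $g(v) := s_3(v)\big(b_2(v)+\sigma^2\big) - s_2(v)\big(b_3(v)+\sigma^2\big)$, the hypothesis gives $g(v) > 0$ for $v < 1/\mu(0,\phi)$, hence only $g\big(1/\mu(0,\phi)\big) \ge 0$ by continuity (e.g., $g(v) = 1/\mu(0,\phi) - v$ is positive on that open set yet vanishes at the boundary). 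Since the $\lambda(b_2-b_3)$ term vanishes at $\lambda = 0$, $\sR'(0)$ is proportional to $g\big(1/\mu(0,\phi)\big)$ alone, so all you obtain is $\sR'(0) \ge 0$, which does not rule out $\lambda^* = 0$. The paper's proof invokes the alignment inequality pointwise at every $\lambda \ge 0$, including $\lambda = 0$, to get $\sR'(\lambda) > 0$ there, and then the first-order expansion at $\lambda = 0$ gives $\lambda^* < 0$. The fix is to apply \eqref{eq:align-cond} at $v = 1/\mu(0,\phi)$ as well (equivalently for all $\lambda \ge 0$), after which $\sR'(0) > 0$ follows directly from your bracket computation; a continuity argument cannot bridge the endpoint.
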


It is worth mentioning that although we state our results for general deterministic signals, our analysis can also incorporate random signals. 
In such cases, when $\bbeta$ is random, one can simply replace $\bB$ in the conclusion with its expectation $\EE[\bB]$.
Next, we highlight some special cases of \Cref{thm:stationary-point-no-shift} and compare them with previously known results.

When $\bSigma = \bI$ or $\EE[\bB] = (\alpha^2/p) \bI$, it is easy to verify that the general alignment condition \eqref{eq:align-cond} does not hold (see \Cref{rmk:thm:stationary-point-no-shift-iso}).
This corresponds to the special cases studied by \citet{dicker2016ridge,dobriban_wager_2018}, where $\lambda^{*} \ge 0$.

The general alignment condition \eqref{eq:align-cond} in \Cref{thm:stationary-point-no-shift} encompasses the strict alignment conditions in \citet{wu_xu_2020,richards_mourtada_rosasco_2021}.
Under strict alignment conditions, \citet{wu_xu_2020} demonstrate that the optimal ridge penalty is negative in the overparameterized and noiseless setting.
They assume perfect alignment or misalignment between the signal distribution and the spectrum distribution of the covariance.
This also includes the strong and weak features models considered in \citet{richards_mourtada_rosasco_2021}.
When the signal is strictly aligned with the spectrum of $\bSigma$ and $\sigma^2=0$, it can be shown that \eqref{eq:align-cond} holds for all $\mu>0$ (see \Cref{prop:thm:stationary-point-no-shift-strict}).

The general alignment condition \eqref{eq:align-cond} allows for a broader range of signal and covariance structures.
For example, in scenarios where the signal is the average of the largest and smallest eigenvectors of $\Sigma$, the strict alignment condition does not hold.
However, these scenarios can still satisfy the general alignment conditions, as we will illustrate shortly.

In the noiseless setting, when $\sigma^2 = 0$, the alignment condition \eqref{eq:align-cond} can be expressed succinctly as: 
\begin{equation}
    \frac{\partial h(\mu, \bSigma_{\bbeta})}{\partial \mu} < \frac{h(\mu, \bSigma)}{\partial \mu}, \label{eq:align-cond-zero-noise}
\end{equation}
by defining the function $h(\cdot, \bSigma_{\bbeta}) \colon \mu \mapsto \log \otr[\bB \bSigma (\bSigma+\mu\bI)^{-2}]$ and $\bSigma_{\bbeta}=\bSigma\bbeta\bbeta^{\top}$.
At a high level, these alignment conditions capture how aligned the signal vector is with the feature covariance matrix. 
When the alignment is strong, it indicates that the problem is effectively low-dimensional. 
In such cases, less regularization is needed if the signal energy in this effective direction is sufficiently large. 

In the noisy setting, when $\sigma^2 \neq 0$, the condition \eqref{eq:align-cond} explicitly trades off the alignment of the signal and the noise level.
While \citet[Proposition 5]{wu_xu_2020} and \citet[Corollary 2]{richards_mourtada_rosasco_2021} provide upper bounds on $\sigma^2$ for optimal negative regularization under restricted data models, \eqref{eq:align-cond} applies to a wider class of data models.
In this sense, \Cref{thm:stationary-point-no-shift} extends the previous results on the occurrence of optimal negative regularization to a more general setting.

\begin{figure}[!t]
    \centering
    \includegraphics[width=0.99\textwidth]{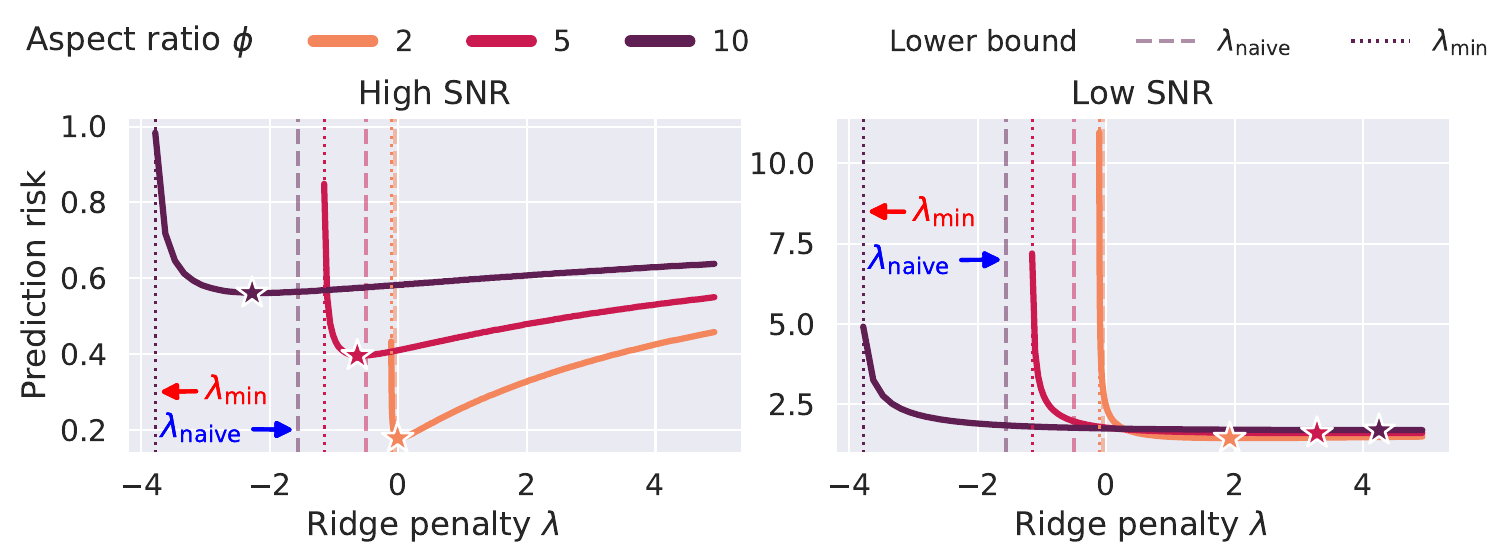}
    \caption{
    \textbf{Illustration of negative or positive optimal regularization under general alignment.}
    We plot the in-distribution risk of ridge regression against the penalty $\lambda$ for varying data aspect ratios $\phi$ in the overparameterized regime.
    The left and right panels correspond to scenarios when \SNR is high ($\sigma^2=0.01$) and low ($\sigma^2=1$), respectively.
    The data model has a covariance matrix $(\bSigma_{\mathrm{ar1}})_{ij}:= \rhoar^{|i-j|}$ with parameter $\rhoar=0.5$, and a coefficient $\bbeta:=\frac{1}{2}(\bw_{(1)} + \bw_{(p)})$, where $\bw_{(j)}$ is the $j$th eigenvector of $\bSigma_{\mathrm{ar1}}$.
    }
    \label{fig:negative_optimal_our_condition}
\end{figure}

In \Cref{fig:negative_optimal_our_condition}, we illustrate our theoretical result of \Cref{thm:stationary-point-no-shift}.
When \SNR is high, we observe that the optimal ridge penalty can be negative in the overparameterized regime.
In particular, for $\phi=10$, the negative optimal ridge penalty exceeds the bound used in \citet{wu_xu_2020}.
It is worth noting that the scenario depicted in \Cref{fig:negative_optimal_our_condition} does not satisfy the strict alignment condition, but is incorporated by our characterization in \Cref{thm:stationary-point-no-shift}.

\subsection{Out-of-Distribution Optimal Regularization}

We now investigate the behavior of the optimal ridge penalty under covariate shift or regression shift.
In the in-distribution setting, when $\bSigma_0=\bSigma$ and the signals are isotropic ($\bbeta\bbeta^{\top}\asympequi (\alpha^2/p) \bI$), previous works \citep{dobriban_wager_2018,han2023distribution} show that the optimal ridge penalty is $\lambda^*=\phi/\SNR\geq 0$.
Interestingly, even when allowing for negative regularization and covariate shift ($\bSigma_0\neq\bSigma$), it is easy to check that the optimal $\lambda$ remains positive in data models with isotropic signals for any $\phi \in (0, \infty)$.

\begin{restatable}
    [Optimal regularization under covariate shift and random signal]
    {proposition}
    {ThmStationaryPointCovShiftIso}
    \label{thm:stationary-point-cov-shift-iso}    
    When $\bSigma_0\neq\bSigma$ and $\bbeta_0=\bbeta$, assuming isotropic signals $\EE[\bbeta\bbeta^{\top}]= (\alpha^2 / p) \bI$, we have $\lambda^*(\phi) = \phi /\SNR=\argmin_{\lambda}\EE_{\bbeta}[\sR(\lambda,\phi)]$.
    Furthermore, for $\lambda_{\min}<0$ such that $\bX^{\top}\bX/n \succ \lambda_{\min}\bI$, even the non-asymptotic OOD risk \eqref{eq:ridge_prederr} is minimized at $\lambda^*_p = \phi_p /\SNR$, where $\phi_p = p/n$.
\end{restatable}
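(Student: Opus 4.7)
The plan is to prove the non-asymptotic claim first by a direct diagonalization of the sample Gram matrix, and then obtain the asymptotic claim by passing to the deterministic equivalent of Proposition~\ref{prop:ood-risk-asymptotics} as $\phi_p \to \phi$.

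Since $\bbeta_0 = \bbeta$ and (by Definition~\ref{def:dist}) $y_0 = \bx_0^\top\bbeta + \varepsilon_0$ with $\EE[\varepsilon_0\bx_0]=\bzero$ and $\EE[\varepsilon_0^2]=\sigma_0^2$, the cross term in \eqref{eq:ridge_prederr} vanishes and
\[
R(\hat{\bbeta}^\lambda) \;=\; \sigma_0^2 + (\bbeta-\hat{\bbeta}^\lambda)^\top \bSigma_0 (\bbeta-\hat{\bbeta}^\lambda).
\]
Setting $\hat{\bSigma}=\bX^\top\bX/n$ and $\bM_\lambda=(\hat{\bSigma}+\lambda\bI)^\dagger \bX^\top/n$, and using the decomposition $\by=\bX\bbeta+\bepsilon$ from Definition~\ref{def:dist}, we get $\hat{\bbeta}^\lambda=\bM_\lambda\bX\bbeta+\bM_\lambda\bepsilon$. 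Taking the Bayes expectation over $\bbeta$ with $\EE[\bbeta\bbeta^\top]=(\alpha^2/p)\bI$ and over the independent $\bepsilon$ with $\EE[\bepsilon\bepsilon^\top]=\sigma^2\bI_n$, the $\bbeta$--$\bepsilon$ cross term vanishes and
\[
\EE_{\bbeta,\bepsilon}[R(\hat{\bbeta}^\lambda)] - \sigma_0^2 \;=\; \tfrac{\alpha^2}{p}\tr[\bSigma_0(\bI-\bM_\lambda\bX)^2] + \sigma^2 \tr[\bSigma_0 \bM_\lambda\bM_\lambda^\top].
\]

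The crucial step is to diagonalize. Writing $\hat{\bSigma}=\bU\bD\bU^\top$ with $\bD=\diag(d_1,\ldots,d_p)$, both $\bM_\lambda\bX=(\hat{\bSigma}+\lambda\bI)^\dagger\hat{\bSigma}$ and $\bM_\lambda\bM_\lambda^\top=(\hat{\bSigma}+\lambda\bI)^\dagger\hat{\bSigma}(\hat{\bSigma}+\lambda\bI)^\dagger/n$ are spectral functions of $\hat{\bSigma}$; they are diagonal in the basis $\bU$ with entries $d_i/(d_i+\lambda)$ and $d_i/[n(d_i+\lambda)^2]$ wherever $d_i+\lambda\neq 0$. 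With $W_{ii}=(\bU^\top\bSigma_0\bU)_{ii}\geq 0$, one obtains
\[
\EE_{\bbeta,\bepsilon}[R(\hat{\bbeta}^\lambda)] - \sigma_0^2 \;=\; \sum_{i:\,d_i>0}\frac{W_{ii}}{(d_i+\lambda)^2}\Bigl[\tfrac{\alpha^2}{p}\lambda^2 + \tfrac{\sigma^2}{n}d_i\Bigr] + C,
\]
where $C$ collects the $d_i=0$ contributions and is constant in $\lambda$. A direct differentiation of the generic summand yields $2d_i[(\alpha^2/p)\lambda-\sigma^2/n]/(d_i+\lambda)^3$; under the hypothesis $\hat{\bSigma}\succ\lambda_{\min}\bI$, one has $d_i+\lambda>0$ for every $\lambda>\lambda_{\min}$, so each summand is uniquely and simultaneously minimized at $\lambda=p\sigma^2/(n\alpha^2)=\phi_p/\SNR$, \emph{irrespective of $\bSigma_0$}. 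Since $W_{ii}\geq 0$, the full sum is minimized at the same $\lambda^*_p=\phi_p/\SNR$, establishing the non-asymptotic claim.

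For the asymptotic claim, one invokes the deterministic equivalent of Proposition~\ref{prop:ood-risk-asymptotics} under Bayes averaging, $\EE_\bbeta[R(\hat{\bbeta}^\lambda)]\asympequi\EE_\bbeta[\sR(\lambda,\phi)]$, and combines continuity of $\lambda\mapsto\EE_\bbeta[\sR(\lambda,\phi)]$ with the convergence $\phi_p/\SNR\to\phi/\SNR$ to conclude $\lambda^*(\phi)=\phi/\SNR$. The main technical obstacle is the careful treatment of the pseudoinverse when $\lambda<0$ and $\hat{\bSigma}$ is singular: one must verify that $\hat{\bSigma}+\lambda\bI$ is invertible on the range of $\hat{\bSigma}$ throughout the admissible range of $\lambda$ under the stated spectrum condition, and that the zero-eigenvalue contributions are genuinely $\lambda$-independent so that they can be absorbed into the constant $C$; both follow from $\hat{\bSigma}\succ\lambda_{\min}\bI$.
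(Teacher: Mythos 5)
Your proof is correct, and the non-asymptotic part is in fact the same argument as the paper's, just written at the eigenvalue level rather than via trace algebra. Starting from the Bayes-averaged risk, the paper writes (with $\hSigma = \bX^\top\bX/n$ and $\phi_p = p/n$)
\[
\EE_{\bbeta,\bepsilon}[R(\hbeta^\lambda)] - \sigma_0^2
= \lambda^2\alpha^2\,\otr\!\big[(\hSigma+\lambda\bI)^{-2}\bSigma_0\big]
+ \sigma^2\phi_p\,\otr\!\big[(\hSigma+\lambda\bI)^{-2}\hSigma\bSigma_0\big],
\]
differentiates directly, and collapses the result to $2(\lambda\alpha^2-\sigma^2\phi_p)\,\otr\!\big[(\hSigma+\lambda\bI)^{-3}\hSigma\bSigma_0\big]$, whose sign is that of $\lambda-\phi_p/\SNR$ because the trace factor is nonnegative. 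Your diagonalization makes this trace factor explicit as $\sum_{i} W_{ii}d_i/(d_i+\lambda)^3$, and in doing so you make transparent the observation that each spectral component is \emph{simultaneously} minimized at $\phi_p/\SNR$ regardless of $\bSigma_0$ --- a point the paper only remarks on afterward. Your explicit bookkeeping of the $d_i=0$ contributions (and the fact that they are $\lambda$-independent) is slightly more careful than what is written in the paper, so no complaint there.

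Where you deviate from the paper is the asymptotic claim. The paper proves it independently (Part 1 of their proof), differentiating the deterministic equivalent $\EE_\bbeta[\sR(\lambda,\phi)]$ via the fixed-point machinery and exhibiting $\sR'(\lambda,\phi) \propto \big(\sigma^2 - \alpha^2\lambda/\phi\big)\cdot\frac{\partial}{\partial\mu}\frac{q_v(\bSigma_0,\bSigma)}{1-q_v(\bSigma,\bSigma)}$, the last factor being strictly negative. This gives the minimizer directly, without any passage to the limit. Your route --- taking the $p\to\infty$ limit of the finite-sample result --- is in principle valid but not as stated: pointwise convergence of the Bayes risk to $\EE_\bbeta[\sR(\lambda,\phi)]$ together with mere continuity of the limit is not enough to conclude that $\argmin$'s converge. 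What rescues you is that the finite-sample Bayes risk is strictly unimodal with unique minimizer $\phi_p/\SNR$ (its derivative has the sign of $\lambda-\phi_p/\SNR$), so for any fixed $\lambda_1<\lambda_2\le\phi/\SNR$ one eventually has the finite-sample ordering $R_p(\lambda_1)\ge R_p(\lambda_2)$, and taking limits gives $\EE_\bbeta[\sR(\lambda_1,\phi)]\ge\EE_\bbeta[\sR(\lambda_2,\phi)]$; the symmetric argument applies on the other side of $\phi/\SNR$. You should make that unimodality-plus-sandwich argument explicit rather than invoking ``continuity.'' Alternatively, simply mimic the finite-sample sign argument directly on the deterministic equivalent as the paper does, which sidesteps the limiting argument entirely.

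One minor caveat: the hypothesis $\hSigma\succ\lambda_{\min}\bI$ as written in the statement is vacuous when $\lambda_{\min}<0$ (since $\hSigma\succeq 0$). What is actually needed, and what your sign argument uses, is that $d_i+\lambda>0$ for all nonzero eigenvalues $d_i$ throughout the $\lambda$-range under consideration, i.e., $\lambda>-\min\{d_i: d_i>0\}$; this keeps every $(d_i+\lambda)^3$ in your derivative positive. You gesture at this but it is worth stating precisely.
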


We remark that \Cref{thm:stationary-point-cov-shift-iso} can also be seen as a result of a Bayes optimality argument.
However, we provide a more direct argument in \Cref{sec:thm:stationary-point-cov-shift-iso-proof}.
A result of this flavor for random features is also obtained by \citet[Proposition 6.1]{tripuraneni2021covariate}.
An interesting observation from \Cref{thm:stationary-point-cov-shift-iso} is that the optimal ridge penalty does not depend on $\bSigma_0$.
This implies that when the signal is isotropic, one does not need to worry about the covariate shift when tuning the penalty.
Therefore, generalized cross-validation for in-distribution risks \citep{patil2021uniform} can still yield optimal penalties even for OOD risks under isotropic signals!

However, it is important to note that this result holds specifically for random isotropic signals, which may not be realistic in practice. 
In scenarios where (near) random isotropic signals are not present, the question of data-dependent regularization tuning in the OOD setting is not as straightforward.
We do not consider data-dependent tuning in the current paper and instead focus on the theoretical properties of the (oracle) optimal regularization (and the corresponding risk).

\Cref{thm:stationary-point-cov-shift-iso} suggests that the optimal penalty $\lambda^*$ remains invariant for OOD risks under isotropic signals. 
Although random isotropic signals make the theory more tractable, they generally are not realistic in practice. 
The following result examines the optimal ridge penalty under deterministic signals, with similar conclusions holding for random anisotropic signals.

\begin{restatable}
    [Optimal regularization under covariate shift and deterministic signal]
    {theorem}
    {ThmStationaryPointCovShift}
    \label{thm:stationary-point-cov-shift}
    Assume the setting of \Cref{prop:ood-risk-asymptotics} with $\bSigma_0\neq\bSigma$, $\bbeta_0=\bbeta$.
    \begin{enumerate}[leftmargin=4mm,itemsep=0pt,font=\normalfont]
        \item
        \label{thm:stationary-point-cov-shift-item-underparameterized}
        \emph{(Underparameterized)} When $\phi<1$, we have $\lambda^*\geq 0$.

        \item
        \label{thm:stationary-point-cov-shift-item-overparameterized-estimation-risk}
        \emph{(Overparameterized)} When $\phi>1$, if $\bSigma_0=\bI$ (corresponding to the estimation risk), then we have $\lambda^* \ge 0$.

        \item
        \label{thm:stationary-point-cov-shift-item-overparameterized}
        \emph{(Overparameterized)} When $\phi>1$, if $\bSigma=\bI$ and
        {
        \begin{align}
            \otr[\bSigma_0\bB] > \otr[\bSigma_0]\left(\otr[\bB] + \frac{(1+\mu(0,\phi))^3}{\mu(0,\phi)^3}\sigma^2\right), \label{eq:align-cov}
        \end{align}
        }
        where $\bB=\bbeta\bbeta^{\top}$, then we have $\lambda^* < 0$.
    \end{enumerate}
\end{restatable}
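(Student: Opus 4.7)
The plan is to reduce the three-case analysis to a single-variable sign-of-derivative problem. By \Cref{prop:ood-risk-asymptotics}, the covariate-shift setting $\bbeta_0 = \bbeta$ gives $\sS \equiv 0$ and $\kappa^2 = \sigma_0^2$ constant in $\lambda$, so $\lambda^{*}$ minimizes $\sB+\sV$. Reparameterize through $\mu$: since $\lambda(\mu) = \mu(1 - \phi\,\otr[\bSigma(\bSigma+\mu\bI)^{-1}])$ has derivative $1 - \phi\,\otr[\bSigma^2(\bSigma+\mu\bI)^{-2}]$, which is strictly positive on $(\mu_{\min},\infty)$ by \Cref{def:lower_bound_negative_regularization}, the map $\mu \mapsto \lambda$ is strictly increasing there. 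Setting $\mu_0 := \mu(0,\phi)$, we have $\mu_0 = 0$ when $\phi < 1$ and $\mu_0$ is the unique positive root of $\phi\,\otr[\bSigma(\bSigma+\mu_0\bI)^{-1}] = 1$ when $\phi > 1$. Thus $\mathrm{sign}(\lambda^{*}) = \mathrm{sign}(\mu^{*} - \mu_0)$, and after verifying uniqueness of the stationary point of $\sB+\sV$ on $(\mu_{\min},\infty)$ (which should follow from convexity arguments analogous to those in \Cref{sec:optimal_risk_monotonicity}), it suffices to determine the sign of $\tfrac{d(\sB+\sV)}{d\mu}\big|_{\mu_0}$.

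For Part~1 ($\phi<1$, so $\mu_0 = 0$): since $\sB(\mu) = \mu^2 F(\mu)$ with $F$ continuous at $0$, one has $\sB(0) = \sB'(0^{+}) = 0$, while $\sV = \sigma^2 \tv$ has $\tv'(0^{+}) \le 0$ by direct inspection of $\tv$'s quotient form (numerator decreasing and denominator increasing in $\mu$). Hence $\tfrac{d(\sB+\sV)}{d\mu}\big|_{0^{+}} \le 0$, giving $\lambda^{*} \ge 0$. For Part~3 ($\bSigma = \bI$, $\phi>1$): everything collapses to scalars with $(\bSigma+\mu\bI)^{-1} = (1+\mu)^{-1}\bI$, $\mu_0 = \phi-1$, and $\tv(\mu) = \phi\,\otr[\bSigma_0]/((1+\mu)^2 - \phi)$. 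Direct differentiation of $\sB = \tfrac{\mu^2}{(1+\mu)^2}(\tv\|\bbeta\|^2 + \bbeta^{\top}\bSigma_0\bbeta)$ and $\sV = \sigma^2\tv$ at $\mu_0$ yields
\begin{align*}
\tfrac{d(\sB+\sV)}{d\mu}\Big|_{\mu_0} = \tfrac{2(\phi-1)}{\phi^3}\bigl(\bbeta^{\top}\bSigma_0\bbeta - \otr[\bSigma_0]\|\bbeta\|^2\bigr) - \tfrac{2\sigma^2\,\otr[\bSigma_0]}{(\phi-1)^2}.
\end{align*}
Using $\phi/(\phi-1) = (1+\mu_0)/\mu_0$ and the identifications $\bbeta^{\top}\bSigma_0\bbeta = p\,\otr[\bSigma_0\bB]$ and $\|\bbeta\|^2 = p\,\otr[\bB]$ (up to the paper's normalization of the noise term), strict positivity of this derivative is exactly condition~\eqref{eq:align-cov}, forcing $\mu^{*} < \mu_0$ and $\lambda^{*} < 0$.

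Part~2 ($\phi>1$, $\bSigma_0 = \bI$, estimation risk) is the technical heart. Here $\sB = \mu^2\bigl(\tv\,\bbeta^{\top}\bSigma(\bSigma+\mu\bI)^{-2}\bbeta + \bbeta^{\top}(\bSigma+\mu\bI)^{-2}\bbeta\bigr)$ and $\sV = \sigma^2\tv$ with $\tv = \phi\,\otr[\bSigma(\bSigma+\mu\bI)^{-2}]/(1 - \phi\,\otr[\bSigma^2(\bSigma+\mu\bI)^{-2}])$. I differentiate via $\tfrac{d}{d\mu}(\bSigma+\mu\bI)^{-1} = -(\bSigma+\mu\bI)^{-2}$ and the quotient rule on $\tv$, then evaluate at $\mu_0$ and invoke the $\lambda=0$ identity $\phi\,\otr[\bSigma(\bSigma+\mu_0\bI)^{-1}] = 1$ together with $\bSigma(\bSigma+\mu\bI)^{-1} = \bI - \mu(\bSigma+\mu\bI)^{-1}$ to cancel the cross-couplings between $\sB'$ and $\sV'$ and obtain a manifestly non-positive expression. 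This is the main obstacle I anticipate: unlike Parts~1 and~3, where one component either vanishes or scalarizes, Part~2's sign comes from a delicate algebraic cancellation enforced by the fixed-point identity, and some care---likely a PSD-ordering argument exploiting the commutativity of $(\bSigma+\mu\bI)^{-1}$ with $\bSigma$---is needed to make the non-positivity structurally transparent.
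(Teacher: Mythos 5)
Your reduction to the sign of $\frac{d(\sB+\sV)}{d\mu}$ at $\mu_0 = \mu(0,\phi)$ captures the right boundary calculation, but as stated it is incomplete: knowing the derivative's sign at $\mu_0$ only pins down $\operatorname{sign}(\lambda^*)$ if the risk has at most one stationary point on $(\lambda_{\min},\infty)$, and you only gesture at that step (``convexity arguments analogous to those in \Cref{sec:optimal_risk_monotonicity}''). Those results concern monotonicity in $\phi$ for the \emph{optimally tuned} risk, not convexity or unimodality in $\lambda$, so they do not close the gap. The paper proves the stronger statement that the sign of $\sR'(\lambda,\phi)$ persists on an entire half-line of $\lambda$, which bypasses unimodality entirely. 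Concretely, for Part~3, the paper decomposes $(1 - q_v(\bI,\bI))^2\sR'(\lambda,\phi)$ into three terms (equations \eqref{eq:thm:stationary-point-cov-shift-eq-1}--\eqref{eq:thm:stationary-point-cov-shift-eq-3}) and shows each is nonnegative for \emph{all} $\lambda \ge 0$: the key observation is that $(1+\mu)^3/\mu^3$ is decreasing in $\mu$ while $\mu(\lambda,\phi)$ is increasing in $\lambda$, so condition \eqref{eq:align-cov} (which constrains the boundary value $\mu(0,\phi)$) propagates to every $\lambda \ge 0$. Your boundary calculation at $\mu_0 = \phi-1$ agrees with this (your displayed derivative recovers condition \eqref{eq:align-cov} up to the $1/p$ normalization quirk of $\otr$ applied to $\bB$), but without the propagation argument you cannot rule out a deeper minimum somewhere in $\lambda > 0$.

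Two further remarks on the easier parts. For Part~1 ($\phi<1$), the paper's argument is simpler and does not need derivatives at all: $\sB$ is a positive-semidefinite quadratic form so $\sB \ge 0 = \sB(0,\phi)$ (since $\mu(0,\phi) = 0$ and $\sB \propto \mu^2$), and $\sV$ is strictly decreasing in $\lambda$, so $\sR(\lambda,\phi) \ge \sR(0,\phi)$ for every $\lambda \le 0$; this is a direct comparison argument immune to multi-modality. For Part~2 ($\bSigma_0 = \bI$, $\phi > 1$), the delicate cancellation you anticipate is not actually needed: the paper computes $\sB'$ explicitly, finds that $\sB'(\lambda,\phi)$ has the same sign as $\lambda$ (so $\sB$ is globally minimized at $\lambda = 0$), and combines this with $\sV' < 0$ everywhere. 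On $\lambda < 0$ both derivatives are negative, giving $\sR' < 0$ there, hence $\lambda^* \ge 0$. The ``PSD-ordering argument'' you propose is replaced by the fact $1 - \phi\,\otr[\bSigma^2(\bSigma+\mu\bI)^{-2}] \ge 0$ from the fixed-point structure, which makes the sign of $\sB'$ manifest.
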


When $\phi<1$, Part \eqref{thm:stationary-point-cov-shift-item-underparameterized} of \Cref{thm:stationary-point-cov-shift} suggests that the optimal ridge penalty $\lambda^*$ remains non-negative even under covariate shift.
Similarly, when $\phi > 1$, Part \eqref{thm:stationary-point-cov-shift-item-overparameterized-estimation-risk} of \Cref{thm:stationary-point-cov-shift} (for the estimation risk) also guarantees a non-negative $\lambda^*$.
However, it is quite surprising that in the overparameterized regime, even with deterministic features, the optimal ridge penalty $\lambda^*$ could be negative in Part \eqref{thm:stationary-point-cov-shift-item-overparameterized}.
In particular, in noiseless setting ($\sigma^2=0$), the condition \eqref{eq:align-cov} reduces to the strict alignment condition on $(\bSigma_0,\bbeta)$; see \Cref{prop:thm:stationary-point-no-shift-strict}.

While \Cref{thm:stationary-point-cov-shift} restricts $\bSigma = \bI$ to simplify the condition when $\phi>1$, we provide the condition with general $\bSigma$ in \Cref{eq:sig-shift-gen-cond}.
However, taking $\bSigma = \bI$ suffices to highlight this rather surprising sign-reversal phenomenon.

\begin{restatable}
    [Optimal regularization under regression shift]
    {theorem}
    {ThmStationaryPointSigShift}
    \label{thm:stationary-point-sig-shift}
    Assume the setup of \Cref{prop:ood-risk-asymptotics} with $\bSigma_0=\bSigma$, $\bbeta_0\neq\bbeta$.
    \begin{enumerate}[leftmargin=4mm,itemsep=0pt,font=\normalfont]
        \item
        \label{thm:stationary-point-sig-shift-item-underparameterized}
        \emph{(Underparameterized)} When $\phi<1$, if $\sigma^2=o(1)$ and for all $\mu\geq 0$, the following general alignment holds:
        \begin{align}
            \label{eq:sig_shift_cond}
            \otr[\bB_0\bSigma^2(\bSigma + \mu\bI)^{-2}] > \otr[\bB\bSigma^2(\bSigma + \mu\bI)^{-2}],
        \end{align}
        where $\bB = \bbeta \bbeta^\top$ and $\bB_0=\bbeta_0\bbeta^{\top}$, then we have $\lambda^* < 0$.        
        \item 
        \label{thm:stationary-point-sig-shift-item-overparameterized}
        \emph{(Overparameterized)} When $\phi>1$, if the general alignment conditions \eqref{eq:align-cond} and \eqref{eq:sig_shift_cond} hold, then we have $\lambda^*< 0$.
    \end{enumerate}
\end{restatable}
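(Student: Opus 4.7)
The plan is to prove $\lambda^{*} < 0$ by showing that $\sR(\lambda, \phi)$ has a strictly positive derivative at $\lambda = 0$; by continuity this yields a neighborhood $(-\delta, 0)$ on which $\sR(\lambda) < \sR(0)$, forcing the minimizer below zero. By \Cref{prop:ood-risk-asymptotics} with $\bSigma_0 = \bSigma$, the deterministic equivalent decomposes additively as $\sR(\lambda, \phi) = \sR^{\mathrm{in}}(\lambda, \phi; \bbeta, \sigma^2) + \sS(\lambda, \phi) + \kappa^2$, where $\sR^{\mathrm{in}}$ is the in-distribution risk of \Cref{thm:stationary-point-no-shift} corresponding to the triple $(\bSigma, \bbeta, \sigma^2)$ and $\kappa^2$ does not depend on $\lambda$. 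A short check from the fixed-point equation for $\mu$ shows that $\mu \mapsto \lambda(\mu)$ is strictly increasing on $(\mu_{\min}(\phi), \infty)$, so it suffices to prove that $\partial_\mu \sR^{\mathrm{in}} + \partial_\mu \sS > 0$ at $\mu_0 := \mu(0, \phi)$.

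For the shift term, I would use $\partial_\mu (\bSigma + \mu \bI)^{-1} = -(\bSigma+\mu\bI)^{-2}$ together with the commuting identity $(\bSigma+\mu\bI)^{-1} - \mu(\bSigma+\mu\bI)^{-2} = \bSigma(\bSigma+\mu\bI)^{-2}$ to differentiate $\sS = 2\mu \bbeta^\top \bSigma(\bSigma+\mu\bI)^{-1}(\bbeta_0 - \bbeta)$, obtaining $\partial_\mu \sS = 2 \bbeta^\top \bSigma^2 (\bSigma + \mu \bI)^{-2}(\bbeta_0 - \bbeta) = 2p\bigl(\otr[\bB_0 \bSigma^2 (\bSigma+\mu\bI)^{-2}] - \otr[\bB \bSigma^2 (\bSigma+\mu\bI)^{-2}]\bigr)$. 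Hypothesis \eqref{eq:sig_shift_cond} thus gives $\partial_\mu \sS > 0$ at every $\mu \geq 0$, in particular at $\mu_0$. For the in-distribution term, I split by regime. When $\phi > 1$ so that $\mu_0 > 0$, hypothesis \eqref{eq:align-cond} is exactly the sufficient condition of \Cref{thm:stationary-point-no-shift}(\ref{thm:stationary-point-no-shift-item-overparameterized}) for the triple $(\bSigma, \bbeta, \sigma^2)$ and yields $\partial_\mu \sR^{\mathrm{in}}(\mu_0) > 0$. When $\phi < 1$ so that $\mu_0 = 0$, the $\mu^2$ prefactor in $\sB$ forces $\partial_\mu \sB|_{\mu=0} = 0$, while differentiating $\tv(\mu) = \phi \otr[\bSigma^2(\bSigma+\mu\bI)^{-2}] / (1 - \phi \otr[\bSigma^2(\bSigma+\mu\bI)^{-2}])$ at $\mu = 0$ produces $\partial_\mu \tv|_{\mu=0} = -2\phi \otr[\bSigma^{-1}]/(1-\phi)^2$, so $\partial_\mu \sV|_{\mu = 0} = O(\sigma^2) = o(1)$ under the hypothesis $\sigma^2 = o(1)$, whereas $\partial_\mu \sS|_{\mu=0} = 2\bbeta^\top(\bbeta_0-\bbeta) > 0$ strictly by \eqref{eq:sig_shift_cond} evaluated at $\mu = 0$.

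Summing the two contributions in each regime gives $\partial_\mu \sR(\mu_0) > 0$, which by the monotone change of variables yields $\partial_\lambda \sR(0) > 0$ and therefore $\lambda^{*} < 0$. The main obstacle I expect is uniform control of the $o(1)$ variance slope in the underparameterized regime, since the factor $(1-\phi)^{-2}$ blows up as $\phi \nearrow 1$, so the qualitative assumption ``$\sigma^2 = o(1)$'' must be interpreted as noise vanishing fast enough relative to this scale to be dominated by the strictly positive shift slope coming from $\partial_\mu \sS$. A subsidiary technical matter is reconciling the $v = 1/\mu$ formulation of \eqref{eq:align-cond} with the $\mu$-based chain rule used here when invoking \Cref{thm:stationary-point-no-shift}, and noting that although \eqref{eq:sig_shift_cond} is stated for all $\mu \geq 0$, only its value at $\mu_0$ (together with strict positivity in a one-sided neighborhood, obtained for free by continuity) is actually used in the pointwise derivative argument.
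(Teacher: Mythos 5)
Your decomposition of the risk into an in-distribution piece plus the regression-shift term $\sS$, the change of variables from $\lambda$ to $\mu$, and the computation $\partial_\mu \sS = 2\,\bbeta^\top \bSigma^2(\bSigma+\mu\bI)^{-2}(\bbeta_0-\bbeta)$ all match the paper's strategy and are correct. However, there is a genuine gap in the logical step from ``positive derivative at $\lambda=0$'' to ``the minimizer is negative.'' A strictly positive one-sided derivative at $\lambda=0$ only produces some $\lambda'<0$ with $\sR(\lambda')<\sR(0,\phi)$; it does \emph{not} exclude the possibility that $\sR$ drops even lower at some $\lambda>0$. To conclude $\lambda^*<0$, one must additionally show that $\sR(\lambda,\phi)\geq\sR(0,\phi)$ for every $\lambda\geq 0$. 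The paper obtains this global comparison by combining (i) $\sB(\lambda,\phi)\geq\sB(0,\phi)=0$ for all $\lambda$, (ii) strict positivity of $\sS'(\mu)+\sV'(\mu)$ \emph{for all} $\mu\geq\mu(0,\phi)$ (so that $\sS+\sV$ is strictly increasing on $\lambda\geq 0$), and, in the overparameterized case, the monotonicity of $\sB+\sV$ on $\lambda\geq 0$ from the proof of \Cref{thm:stationary-point-no-shift}. This is precisely where the ``for all $\mu\geq 0$'' quantifier in \eqref{eq:sig_shift_cond} is used; your closing remark that ``only its value at $\mu_0$ is actually used'' is therefore incorrect, and accepting it would leave the proof incomplete.

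Concretely: once you have $\sR'$ strictly positive at $\mu_0$, you still need, for $\phi<1$, the inequality $\sR(\lambda,\phi)\geq\sR(0,\phi)$ for all $\lambda\geq 0$, which follows from $\sB\geq 0=\sB(0,\phi)$ together with $(\sS+\sV)'(\mu)>0$ for all $\mu\geq 0$; and, for $\phi>1$, you need $\sR^{\mathrm{in}}$ increasing on $\lambda\geq 0$ (which \eqref{eq:align-cond} gives at every $\mu\geq\mu(0,\phi)$, not merely at $\mu_0$) together with $\sS'(\mu)>0$ for all such $\mu$. Your concern about the $(1-\phi)^{-2}$ blow-up in $\partial_\mu\sV|_{\mu=0}$ as $\phi\nearrow 1$ is legitimate but does not affect the theorem as stated, since $\phi$ is fixed and the $\sigma^2=o(1)$ hypothesis is taken along the $n,p\to\infty$ limit at that fixed $\phi$; for general $\sigma^2$, the paper instead invokes the balance condition \eqref{eq:sig-shift-gen-cond}.
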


Similarly to Part \eqref{thm:stationary-point-cov-shift-item-overparameterized} of \Cref{thm:stationary-point-cov-shift}, Part \eqref{thm:stationary-point-sig-shift-item-underparameterized} of \Cref{thm:stationary-point-sig-shift} is rather surprising.
As shown in \Cref{tab:opt-reg}, $\lambda^*$ is always positive for the in-distribution setting when $\phi < 1$.
However, \Cref{thm:stationary-point-sig-shift} suggests that $\lambda^*$ can be negative, even for isotropic signals, when there is some alignment or misalignment between $\bbeta^\top \bSigma$ and $(\bbeta - \bbeta_0)^\top \bSigma$.
In fact, when $\bSigma=\bSigma_0=\bI$ and $\langle \bbeta, \bbeta_0\rangle \geq \|\bbeta\|_2^2$, the alignment condition $\bbeta^\top \bSigma^2(\bSigma + \mu\bI)^{-2}  ({\bbeta}_0-\bbeta) \geq 0$ in \Cref{thm:stationary-point-sig-shift} always holds for all $\mu>0$.
It is worth noting that we assume $\sigma^2 = o(1)$ in \Cref{thm:stationary-point-sig-shift} for simplicity, but a more general balance condition that holds for any $\sigma^2 > 0$ is provided in \eqref{eq:sig-shift-gen-cond}.

\begin{figure}[!t]
    \centering
    \includegraphics[width=0.99\textwidth]{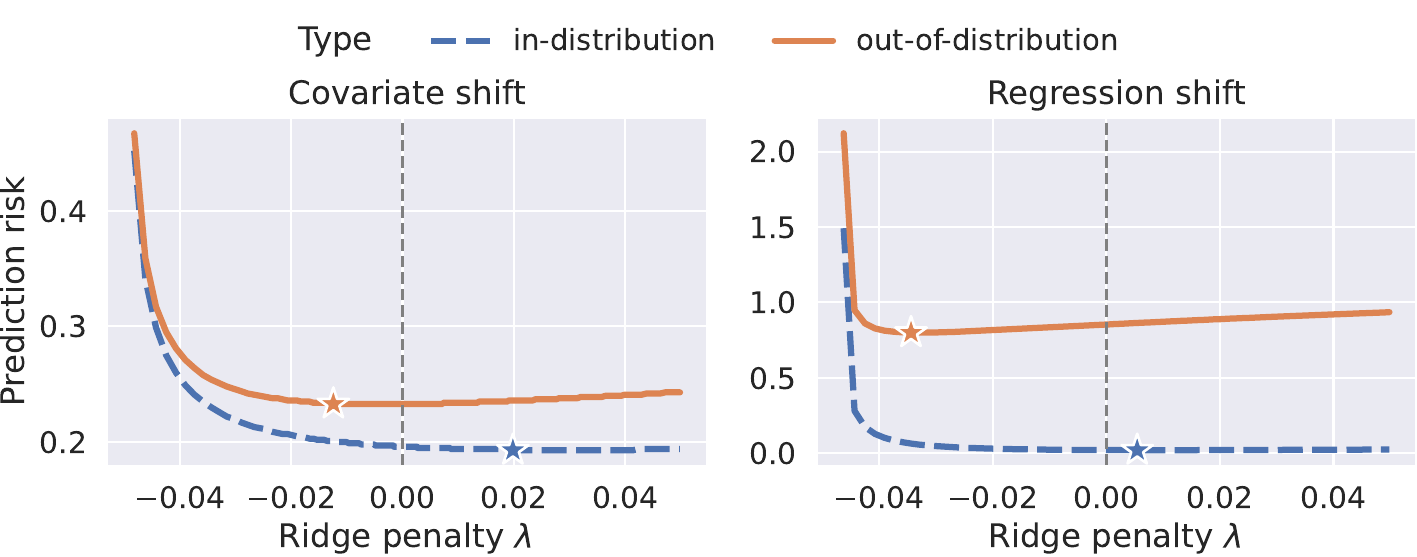}
    \caption{
    \textbf{Covariate and regression shift can lead to negative optimal regularization in both underparameterized and overparameterized regimes.}
    The plot shows the in-distribution and OOD risks against $\lambda$ in the high \SNR setting ($\sigma^2=0.01$ and $\sigma_0^2=0$).
    The left panel shows the overparameterized regime ($\phi=1.5$) where the optimal ridge penalty $\lambda^*$ is negative under covariate shift, when $\bSigma=\bI$, $\bSigma_0=\bSigma_{\mathrm{ar1}}$, and $\bbeta=\bbeta_0=\frac{1}{2}(\bw_{(1)} + \bw_{(p)})$.
    The right panel shows the underparameterized regime ($\phi=0.5$) where the optimal ridge penalty $\lambda^*$ is negative under regression shift, when $\bSigma=\bSigma_0=\bSigma_{\mathrm{ar1}}$, $\bbeta=\frac{1}{2}(\bw_{(1)} + \bw_{(p)})$, and $\bbeta_0=2\bbeta$.    
    }
    \label{fig:negative_optimal_our_condition_ood}
\end{figure}

The numerical illustrations in \Cref{fig:negative_optimal_our_condition_ood} demonstrate the results of \Cref{thm:stationary-point-cov-shift,thm:stationary-point-sig-shift}.
As shown, while the optimal ridge penalties $\lambda^*$ for the in-distribution prediction risks are positive, the OOD prediction risk can be negative and approach its lower limit.
Similar observations also occur in real-world MNIST datasets (see \Cref{tab:MNIST} and \Cref{sec:real_data_illustration} for the experimental details).
This phenomenon arises due to distribution shift, which effectively aligns $\bbeta_0$ and $\bSigma_0$.
In some cases, it may provide a possible explanation for the success of interpolators in practice, as the optimal ridge penalty $\lambda^*$ can become negative under distribution shift, e.g., with random features regression \citep{tripuraneni2021covariate}.

Intuitively, negative regularization may be optimal in certain cases due to the implicit bias of the overparameterized ridge estimator, even when $\lambda = 0$. 
When the signal energy is sufficiently high, it can be beneficial to ``subtract'' some of this bias at the expense of increased variance. 
Negative regularization effectively reduces this bias and can, therefore, be the optimal choice. 
In a broader context, when there is implicit regularization, such as self-regularization resulting from the data structure, and we desire the overall regularization to be smaller than this inherent amount, negative ``external'' regularization can help counterbalance it.

\begin{table}[!t]
    \caption{
    \textbf{Optimal ridge penalty for OOD risks on MNIST gradually becomes more negative with increasing distribution shift.}
    We gradually shift the test distribution by excluding samples with specific labels.
    For more details, please refer to \Cref{sec:real_data_illustration}.
    }
    \label{tab:MNIST}
    \centering
    \begin{tabular}{ccccccc}
        \toprule
         Excl. labels: & $\varnothing$ & $\{4\}$ & $\{3,4\}$ & $\{2,3,4\}$ & $\{1,2,3,4\}$ \\\cmidrule(lr){1-6}
         $\lambda^*$ &1.03& 0.48&0.00&-0.48 & 1.44\\
         \bottomrule
    \end{tabular}
\end{table}

\section{Properties of Optimal Risk}
\label{sec:optimal_risk_monotonicity}

Under the general covariance structure $\bSigma$, Theorem 6 in \citet{patil2023generalized} shows that optimal ridge regression exhibits a monotonic risk profile in the in-distribution setting ($\bSigma_0=\bSigma$) and effectively avoids the phenomena of double and multiple descents. 
This observation motivates a broader investigation into the monotonic behavior of regularization optimization in the out-of-distribution setting. 
In this section, we investigate the monotonicity of the optimal OOD risk and converse for the suboptimal OOD risk.

\subsection{Optimal Risk Monotonicity}

To begin with, we examine the case of isotropic signals under covariate shift.
A direct consequence of \Cref{thm:stationary-point-cov-shift-iso} is the monotonicity property of the risk in this scenario.

\begin{restatable}
    [Optimal risk under isotropic signals]
    {proposition}
    {LemOptimalRiskIsoSignal}
    \label{lem:optimal-risk-iso-signal}
    When $\bSigma_0\neq\bSigma$ and $\bbeta=\bbeta_0$, assuming isotropic signals $\EE[\bbeta\bbeta^{\top}]= (\alpha^2 / p) \bI$ the optimal risk obtained at $\lambda^*(\phi)=\phi/\SNR$ is given by:
    \begin{equation}
        \EE_{\bbeta}[\sR(\lambda^*,\phi)] = \alpha^2 \mu^*\otr[\bSigma_0(\bSigma+\mu^*\bI)^{-1}] + \sigma_0^2,
        \label{eq:optimal-risk-iso-signal}
    \end{equation}
    where $\mu^*=\mu(\lambda^*,\phi)$.
    Furthermore, the left side of \eqref{eq:optimal-risk-iso-signal} is strictly increasing in $\phi$ if $\SNR \in (0, \infty)$ and $\sigma_0^2$ are fixed and strictly increasing in $\SNR$ if $\phi$, $\sigma^2$, and $\sigma_0^2$ are fixed.
\end{restatable}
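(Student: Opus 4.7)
My plan has three parts. First, to derive formula~\eqref{eq:optimal-risk-iso-signal}, I specialize \Cref{prop:ood-risk-asymptotics} to $\bbeta_0=\bbeta$, which collapses $\sS$ to zero and leaves $\kappa^2=\sigma_0^2$. Taking expectation of the bias under $\EE[\bbeta\bbeta^{\top}]=(\alpha^2/p)\bI$ yields
\begin{equation*}
\EE_\bbeta[\sB] = \alpha^2\mu^2\tv\otr[\bSigma(\bSigma+\mu\bI)^{-2}] + \alpha^2\mu^2\otr[\bSigma_0(\bSigma+\mu\bI)^{-2}].
\end{equation*}
Invoking \Cref{thm:stationary-point-cov-shift-iso} to identify the minimizer $\lambda^*=\phi/\SNR$, I substitute $\lambda=\lambda^*$ and use the push-through identity $(\bSigma+\mu^*\bI)^{-1}=\bSigma(\bSigma+\mu^*\bI)^{-2}+\mu^*(\bSigma+\mu^*\bI)^{-2}$ to split the target $\alpha^2\mu^*\otr[\bSigma_0(\bSigma+\mu^*\bI)^{-1}]$; one summand matches the $\alpha^2(\mu^*)^2\otr[\bSigma_0(\bSigma+\mu^*\bI)^{-2}]$ piece of $\EE_\bbeta[\sB]$ exactly, and the remaining scalar identity---after plugging in the definition of $\tv^*$ and cancelling the common factor $\otr[\bSigma_0\bSigma(\bSigma+\mu^*\bI)^{-2}]$---reduces to $\alpha^2\mu^*\phi\otr[\bSigma(\bSigma+\mu^*\bI)^{-1}] = \alpha^2\mu^* - \phi\sigma^2$, which is exactly the fixed-point equation $\mu^* = \lambda^* + \phi\mu^*\otr[\bSigma(\bSigma+\mu^*\bI)^{-1}]$ at $\lambda^*=\phi\sigma^2/\alpha^2$.

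For monotonicity in $\phi$, I write $f(\phi) := \EE_\bbeta[\sR(\lambda^*,\phi)] = \alpha^2 g(\mu^*(\phi)) + \sigma_0^2$ with $g(\mu) := \mu\otr[\bSigma_0(\bSigma+\mu\bI)^{-1}]$. A one-line computation gives $g'(\mu) = \otr[\bSigma_0\bSigma(\bSigma+\mu\bI)^{-2}] > 0$ under positive-definite $\bSigma,\bSigma_0$, so $g$ is strictly increasing, and it suffices to prove that $\mu^*$ is strictly increasing in $\phi$. Writing the fixed-point relation at $\lambda^*=\phi/\SNR$ as $G(\mu,\phi) := \mu - \phi/\SNR - \mu\phi\otr[\bSigma(\bSigma+\mu\bI)^{-1}] = 0$, implicit differentiation yields
\begin{equation*}
\frac{d\mu^*}{d\phi} = \frac{\mu^*\otr[\bSigma(\bSigma+\mu^*\bI)^{-1}] + 1/\SNR}{1 - \phi\otr[\bSigma^2(\bSigma+\mu^*\bI)^{-2}]}.
\end{equation*}
The numerator is strictly positive. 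The denominator is strictly positive because $\lambda^* = \phi/\SNR > 0 > \lambda_{\min}(\phi)$, which by \Cref{def:lower_bound_negative_regularization} forces $\phi\otr[\bSigma^2(\bSigma+\mu^*\bI)^{-2}] < 1$.

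For monotonicity in $\SNR$ with $\phi,\sigma^2,\sigma_0^2$ fixed, I would apply the envelope theorem: since $\lambda^*$ is an interior minimizer, $df/d\SNR = \partial_{\SNR}\EE_\bbeta[\sR(\lambda,\phi)]\big|_{\lambda=\lambda^*}$. Holding $\lambda$ and $\phi$ fixed, the quantities $\mu$ and $\tv$ do not depend on $\SNR$, and the only $\SNR$-dependence enters through $\alpha^2=\sigma^2\,\SNR$ as a linear prefactor on $\EE_\bbeta[\sB]$. Hence
\begin{equation*}
\frac{df}{d\SNR} = \sigma^2(\mu^*)^2\big(\tv^*\otr[\bSigma(\bSigma+\mu^*\bI)^{-2}] + \otr[\bSigma_0(\bSigma+\mu^*\bI)^{-2}]\big) > 0,
\end{equation*}
giving the claim. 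The main technical obstacle I anticipate is Part~1: carefully tracking the chain of algebraic cancellations that collapse the bias-plus-variance expression into the compact single-trace form, which ultimately hinges on recognizing that everything reduces to the fixed-point equation for $\mu^*$; once that identification is made, the two monotonicity statements follow cleanly from implicit differentiation and the envelope theorem, respectively.
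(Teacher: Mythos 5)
Your proof is correct, and it takes a genuinely different route from the paper for all three pieces. For the identity \eqref{eq:optimal-risk-iso-signal}, the paper first re-expresses the bias as $\sB = \alpha^2\mu\,\otr[\bSigma_0(\bSigma+\mu\bI)^{-1}] - (\alpha^2\lambda/\phi)\cdot q_v(\bSigma_0,\bSigma)/(1-q_v(\bSigma,\bSigma))$ using the identity $1-q_v(\bSigma,\bSigma) = \lambda/\mu + \mu q_v(\bSigma,\bI)$, so that at $\lambda^*=\phi\sigma^2/\alpha^2$ the variance term $\sigma^2 q_v(\bSigma_0,\bSigma)/(1-q_v(\bSigma,\bSigma))$ cancels the leftover exactly; you instead expand the target $\alpha^2\mu^*\otr[\bSigma_0(\bSigma+\mu^*\bI)^{-1}]$ via the resolvent push-through, match one summand to the bias, and reduce the remainder (after inserting $\tv^*$ and cancelling the shared $\otr[\bSigma_0\bSigma(\bSigma+\mu^*\bI)^{-2}]$ factor) to the fixed-point equation for $\mu^*$. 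Both are valid algebraic routes to the same cancellation; yours is arguably more transparent as a direct verification. For the $\SNR$-monotonicity, the paper factors the optimal risk as $\phi\sigma^2\cdot(\mu^*/\lambda^*)\cdot\otr[\bSigma_0(\bSigma+\mu^*\bI)^{-1}]+\sigma_0^2$ and appeals to the monotonicity of $\lambda\mapsto\lambda/\mu(\lambda,\phi)$ from \Cref{lem:fixed-point-v-lambda-properties}; you use the envelope theorem, which is cleaner and avoids the auxiliary lemma. For the $\phi$-monotonicity, the paper's proof is actually silent (it only argues the $\SNR$ direction explicitly); your implicit-differentiation argument, with the positivity of $g'(\mu)=\otr[\bSigma_0\bSigma(\bSigma+\mu\bI)^{-2}]$ and the sign of the denominator $1-\phi\otr[\bSigma^2(\bSigma+\mu^*\bI)^{-2}]$ coming from $\lambda^*>0>\lambda_{\min}(\phi)$ and \Cref{def:lower_bound_negative_regularization}, supplies a self-contained argument that the paper omits. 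One small point worth making explicit: the numerator $1/\SNR+\mu^*\otr[\bSigma(\bSigma+\mu^*\bI)^{-1}]$ is strictly positive because $\lambda^*>0$ forces $\mu^*=\mu(\lambda^*,\phi)>0$ by the monotonicity and range properties in \Cref{lem:fixed-point-v-lambda-properties}; you implicitly use this but it should be stated.
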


\Cref{lem:optimal-risk-iso-signal} shows that the optimal OOD risk is a monotonic function of $\phi$ and \SNR. 
This is intuitive because one would expect that having more data (smaller $\phi$) or larger \SNR would result in a lower prediction risk. 
In contrast, the ridge or ridgeless predictor computed on the full data does not exhibit this property \citep[Figure 2]{hastie2022surprises}. 
It is worth noting that the optimal penalty $\lambda^*$ is also monotonically increasing in the data aspect ratio $\phi$ when \SNR is kept fixed. 

However, under anisotropic signals, the optimal regularization penalty generally depends on the specific OOD risk being considered. 
In such cases, it is difficult to obtain analytical formulas for the optimal ridge penalty and the optimal risk.
Nonetheless, we can still show that the optimal ODD risk monotonically increases in $\phi$ and $\SNR$.
We formalize this in the following result, which generalizes the aforementioned in-distribution result in \citet[Theorem 6]{patil2023generalized} to the OOD setting, allowing risk optimization over the possible range of negative regularization (with the lower limit as given in \Cref{def:lower_bound_negative_regularization}).

\begin{restatable}
    [Monotonicity of optimally tuned OOD risk]
    {theorem}
    {ThmMonotonicty}
    \label{thm:monotonicty}
For $\lambda\geq \lambda_{\min}(\phi)$ where $\lambda_{\min}(\phi)$ is as in \eqref{eq:lam_min}, for all $\epsilon> 0$ small enough, the risk of optimal ridge predictor satisfies:
\begin{equation}
    \min_{\lambda\geq \lambda_{\min}(\phi) + \epsilon} R(\hbeta^\lambda) ~~ \asympequi \min_{\lambda\geq \lambda_{\min}(\phi)}\sR(\lambda, \phi), \label{eq:R-lam-phi-opt}
\end{equation}
and right side of \eqref{eq:R-lam-phi-opt} is monotonically increasing in $\phi$ if $\SNR$ and $\sigma_0^2$ are fixed. 
In addition, when $\bbeta=\bbeta_0$ it is monotonically increasing in $\SNR$ if $\phi$, $\sigma^2$, and $\sigma_0^2$ are fixed.
\end{restatable}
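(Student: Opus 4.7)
The plan has three parts: (i) upgrade the pointwise deterministic equivalence of \Cref{prop:ood-risk-asymptotics} to the asymptotic equivalence of the optimized risks in \eqref{eq:R-lam-phi-opt}; (ii) prove monotonicity of the deterministic minimum in $\phi$ by reparametrizing the feasible set via the self-induced regularization $\mu$; (iii) reduce monotonicity in $\SNR$ to a scaling argument in the bias term.

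For (i), I would upgrade the pointwise equivalent to a uniform-in-$\lambda$ equivalent on $[\lambda_{\min}(\phi)+\epsilon,\infty)$. The sample map $\lambda \mapsto R(\hat{\bbeta}^\lambda)$ is smooth with gradient controlled by the resolvent $(\bX^\top\bX/n + \lambda\bI)^{-1}$, whose operator norm is uniformly bounded in probability on this interval by the smallest-eigenvalue bound of \cite{lejeune2022asymptotics} (the same estimate that pins down $\lambda_{\min}(\phi)$ as the tight lower edge, and which forces the $\epsilon$ buffer). The limit $\sR(\lambda,\phi)$ is smooth in $\lambda$, and both the sample and deterministic risks diverge as $\lambda\to\infty$. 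A finite-cover argument on a large compact subinterval, combined with this coercivity for the tail, transfers pointwise convergence to uniform convergence, which in turn allows exchanging $\min_\lambda$ with the limit in probability and gives \eqref{eq:R-lam-phi-opt}.

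For (ii), the key reparametrization is $(\lambda,\phi) \leftrightarrow (\mu,\phi)$. Differentiating the fixed-point relation $\mu = \lambda + \phi\,\otr[\mu\bSigma(\bSigma+\mu\bI)^{-1}]$ in $\mu$ yields $d\lambda/d\mu = 1 - \phi\,\otr[\bSigma^2(\bSigma+\mu\bI)^{-2}]$, whose vanishing is exactly the equation defining $\mu_{\min}(\phi)$ in \Cref{def:lower_bound_negative_regularization}. On the branch $\mu \geq \mu_{\min}(\phi)$, the map $\mu \mapsto \lambda$ is therefore a smooth bijection onto $[\lambda_{\min}(\phi),\infty)$, and
\begin{equation*}
\min_{\lambda \geq \lambda_{\min}(\phi)} \sR(\lambda,\phi) = \min_{\mu \geq \mu_{\min}(\phi)} \tilde{\sR}(\mu,\phi),
\end{equation*}
where $\tilde{\sR}(\mu,\phi) = \sB + \sigma^2\tv + \sS + \kappa^2$ depends on $\phi$ only through $\tv(\mu,\phi) = \phi\,\otr[\bSigma_0\bSigma(\bSigma+\mu\bI)^{-2}] / (1 - \phi\,\otr[\bSigma^2(\bSigma+\mu\bI)^{-2}])$. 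Direct differentiation gives $\partial \tv / \partial \phi = \otr[\bSigma_0\bSigma(\bSigma+\mu\bI)^{-2}] / (1 - \phi\,\otr[\bSigma^2(\bSigma+\mu\bI)^{-2}])^2 > 0$ on the feasible set; since $\sB$ is non-negative and linear in $\tv$ with coefficient $\mu^2\bbeta^\top(\bSigma+\mu\bI)^{-1}\bSigma(\bSigma+\mu\bI)^{-1}\bbeta \geq 0$, and $\sS$ and $\kappa^2$ do not depend on $\phi$ at fixed $\mu$, we get that $\tilde{\sR}(\mu,\phi)$ is non-decreasing in $\phi$ for each admissible $\mu$. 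Separately, monotonicity of $\otr[\bSigma^2(\bSigma+\mu\bI)^{-2}]$ in $\mu$ forces $\mu_{\min}(\phi)$ to be strictly increasing in $\phi$, so the feasible set shrinks. Chaining,
\begin{equation*}
\min_{\mu \geq \mu_{\min}(\phi_2)} \tilde{\sR}(\mu,\phi_2) \;\geq\; \min_{\mu \geq \mu_{\min}(\phi_2)} \tilde{\sR}(\mu,\phi_1) \;\geq\; \min_{\mu \geq \mu_{\min}(\phi_1)} \tilde{\sR}(\mu,\phi_1),
\end{equation*}
for $\phi_1<\phi_2$, which is the desired $\phi$-monotonicity. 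The main obstacle is precisely here: the $(\mu,\phi)$ reparametrization must be shown to be globally valid on the negative-$\lambda$ branch, which is exactly the content of the extension of the subsampling--ridge equivalence flagged in the paper (Theorem \ref{thm:negative-lam}) and requires carrying the analytic-continuation properties of the fixed-point solutions through to the OOD setting.

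For (iii), when $\bbeta = \bbeta_0$ the shift term vanishes and $\kappa^2 = \sigma_0^2$ is independent of $\lambda$ and $\phi$. Writing $\bbeta = \alpha\bu$ with $\bu$ fixed and $\sigma^2$ held fixed, varying $\SNR$ amounts to varying $\alpha^2$. Then $\sR(\lambda,\phi)$ is affine in $\alpha^2$ with slope $\mu^2 \bu^\top (\bSigma+\mu\bI)^{-1}(\tv\bSigma+\bSigma_0)(\bSigma+\mu\bI)^{-1}\bu \geq 0$, strictly positive whenever $\bbeta \neq 0$. Hence $\sR(\lambda,\phi)$ is strictly increasing in $\SNR$ pointwise in $\lambda$, and the envelope principle transfers strict monotonicity to its minimum, completing the proof.
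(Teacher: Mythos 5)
Your proof is correct, and Part~(ii) takes a genuinely different and arguably cleaner route than the paper's. The paper establishes $\phi$-monotonicity by first proving the subsampling--ridge equivalence under negative regularization (\Cref{thm:negative-lam}), then showing $\min_{\psi\geq\phi}\sR(\lambda_{\min}(\phi);\phi,\psi)$ is increasing in $\phi$ via nested minimization over the subsample ratio $\psi$. You instead reparametrize $\lambda\leftrightarrow\mu$ directly and observe that, at fixed $\mu$, the deterministic risk depends on $\phi$ only through $\tv(\mu,\phi)=\phi A/(1-\phi B)$ with $A=\otr[\bSigma_0\bSigma(\bSigma+\mu\bI)^{-2}]$, $B=\otr[\bSigma^2(\bSigma+\mu\bI)^{-2}]$, which is increasing in $\phi$; combined with the feasible set $\{\mu\geq\mu_{\min}(\phi)\}$ shrinking in $\phi$, the two-step chain of inequalities closes the argument without invoking the subsampling machinery at all. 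Both approaches rest on the same underlying invariant (level curves of $\mu$), but yours cuts out the middleman. One small correction: your flagged ``obstacle''---that the $(\mu,\phi)$ reparametrization on the negative-$\lambda$ branch needs \Cref{thm:negative-lam}---is not actually an obstacle. The bijectivity and strict monotonicity of $\lambda\mapsto\mu(\lambda,\phi)$ on $(\lambda_{\min}(\phi),\infty)$ follows from the sign of $d\lambda/d\mu=1-\phi\,\otr[\bSigma^2(\bSigma+\mu\bI)^{-2}]$ for $\mu>\mu_{\min}(\phi)$, which is exactly the content of \Cref{lem:fixed-point-v-lambda-properties} (itself a consequence of the analytic-continuation result of \citet{lejeune2022asymptotics}); no subsampling is required. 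For Part~(i), you sketch a finite-cover/coercivity argument for the uniform-in-$\lambda$ transfer of the deterministic equivalent; the paper instead uses an equicontinuity argument adapted from \citet{bellec2023corrected} together with the path equivalence from \Cref{lem:v-equiv-path}. Either suffices. For Part~(iii), your affine-in-$\alpha^2$ observation matches the paper's argument, and your identification of $\kappa^2=\sigma_0^2$ (independent of $\alpha^2$) when $\bbeta=\bbeta_0$ is actually more precise than the paper's phrasing, which loosely writes that ``$\kappa$'' is increasing in $\alpha^2$.
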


\begin{figure}[!t]
    \centering
    \includegraphics[width=0.99\textwidth]{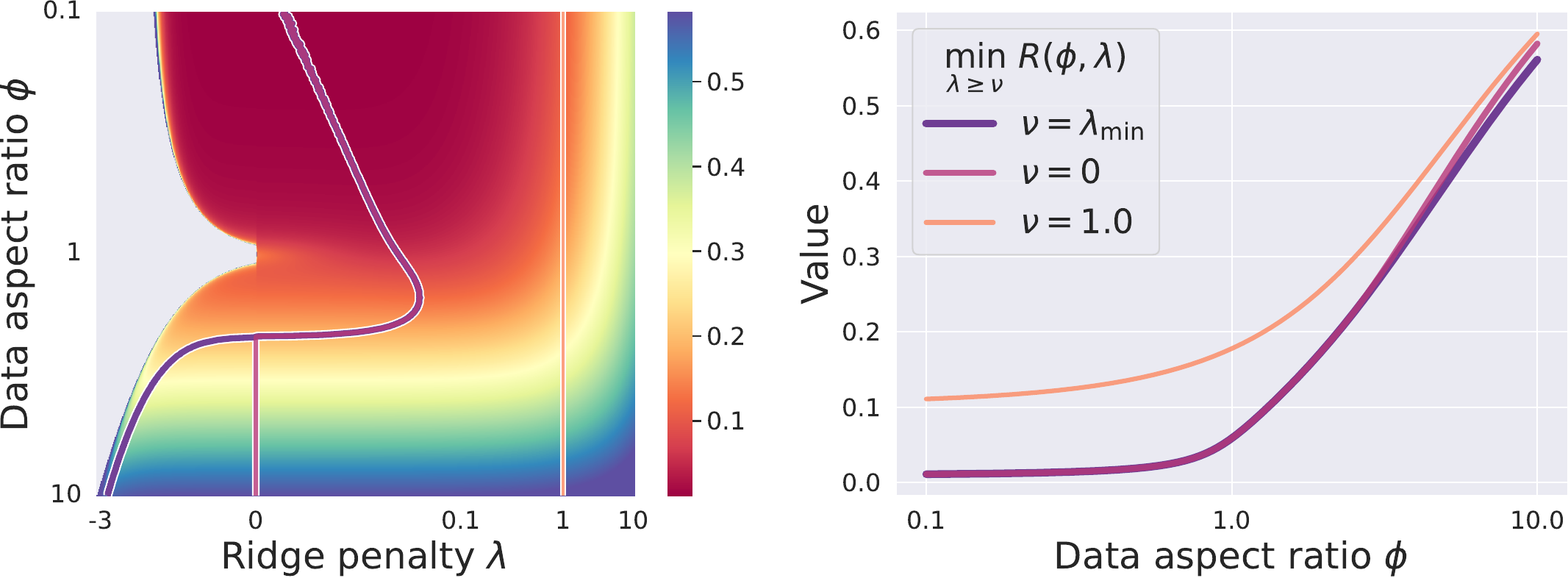}
    \caption{
    \textbf{Ridge regression optimized over $\lambda\geq\nu$ for different thresholds $\nu$ has monotonic risk profile.}
    We showcase the prediction risk of optimal ridge regression under the same data model as in \Cref{fig:negative_optimal_our_condition}, with $\sigma^2=0.01$.
    The left panel shows the heatmap of the risks $\sR(\lambda, \phi)$ of ridge regression for different ridge penalties $\lambda$ and data aspect ratios $\phi$.
    The lines indicate the optimized ridge risks $\min_{\lambda\geq \nu}\sR(\lambda, \phi)$ at different thresholds $\nu$.
    The right panel shows the optimized risk $\min_{\lambda\geq \nu}\sR(\lambda, \phi)$ as a function of $\phi$.
    }
    \label{fig:optimal-risk}
\end{figure}

We find the monotonicity of the optimal risk in $\phi$ remarkable because it even holds under \emph{arbitrary} covariate and regression shift!
The monotonicity in $\SNR$ under regression shift can be similarly analyzed but requires fixing more parameters.
When considering the in-distribution prediction risk, \citet[Theorem 6]{patil2023generalized} demonstrate that the optimally tuned ridge over $\lambda\in(0, \infty)$ exhibits a monotonic risk profile in the data aspect ratio $\phi$.
It is somewhat surprising that optimizing over both $(0, \infty)$ and $(\lambda_{\min}(\phi), \infty)$ yields monotonic behavior, as numerically verified in \Cref{fig:optimal-risk}.

We also illustrate the optimal risk monotonicity behavior on MNIST in \Cref{fig:MNIST_risk_monotoncity}.
Further, the minimum risk over $(\lambda_{\min}(\phi), \infty)$ can be significantly lower than the minimum risk over $(0, \infty)$, particularly in the overparameterized regime.
Although most software packages only consider positive ridge regularization for ridge tuning, \Cref{fig:optimal-risk} suggests that allowing negative regularization can lead to significant improvements.

\begin{figure*}[!ht]
    \centering
    \includegraphics[width=0.65\textwidth]{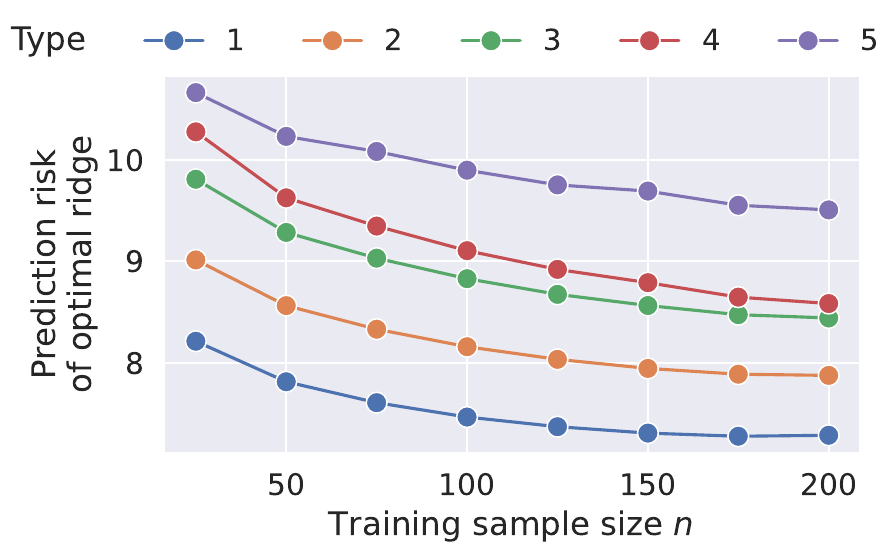}
    \caption{
        \textbf{Effect of distribution shift on the risk monotonicity behavior of optimal ridge on MNIST.}
        The figure illustrates the risk profile (against the training sample size) of optimal ridge regression on the MNIST dataset when subjected to different types of distribution shifts. 
        We follow the same setup as for \Cref{tab:MNIST} (see \Cref{sec:real_data_illustration} for more details) and vary the number of training sample size $n$ from 25 to 200, and inspect the OOD prediction risk of the optimal ridge predictor.
        Different colors represent different types of shift from less severe (\texttt{Type} 1) to more severe (\texttt{Type} 5).
        The y-axis represents the out-of-distribution prediction risk for the task of accurately predicting the digit value for unseen images. 
        The figure shows a clear pattern where the optimal ridge exhibits a monotonically decreasing risk in the training sample size $n$.
        }
    \label{fig:MNIST_risk_monotoncity}
\end{figure*}

Finally, we consider the converse question: is optimal regularization \emph{necessary} for achieving monotonic risk? 
When considering the excess in-distribution prediction risk under isotropic design (i.e., $\bSigma=\bI$), the following theorem demonstrates that the prediction risk of ridge regression is generally non-monotonic in the data aspect ratio $\phi \in(0,\infty)$.

\begin{restatable}
    [Non-monotonicity of suboptimally tuned risk]
    {theorem}
    {ThmNonmonotonicity}
    \label{thm:nonmonotonicity}
    When $(\bSigma_0,\bbeta_0)=(\bSigma,\bbeta)$ and $\bSigma=\bI$, the risk component equivalents defined in \eqref{eq:risk-det-equiv} have the following properties:
    \begin{enumerate}[leftmargin=4mm,itemsep=0pt,font=\normalfont]
        \item \emph{(Bias component)} For all $\lambda> 0$, $\sB(\lambda, \phi)$ is strictly increasing over $\phi\in(0,\lambda+1)$ and strictly decreasing over $\phi\in(\lambda+1,\infty)$.
        
        \item \emph{(Variance component)} For all $\lambda> 0$, $\sV(\lambda, \phi)$ is strictly increasing over $\phi\in(0,\infty)$.

        \item \emph{(Risk)} When $\|\bbeta\|_2^2> 0$, for all $\lambda>0$ and $\epsilon>0$, there exist $\sigma^2,\phi\in(0,\infty)$, such that $\partial \sR(\lambda, \phi)/\partial \phi \leq -\epsilon$, i.e.,
        \begin{equation}
            \max_{\sigma^2,\phi\in(0,\infty)} \min_{\lambda\geq \lambda_{\min}(\phi)} {\partial \sR(\lambda, \phi)}/{\partial \phi} \leq -\epsilon.
            \label{eq:nonmonotonicity-risk}
        \end{equation}
    \end{enumerate}
\end{restatable}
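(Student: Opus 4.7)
The plan is to leverage the dramatic simplification that occurs under $\bSigma=\bSigma_0=\bI$ and $\bbeta_0=\bbeta$, which collapses the matrix expressions of \Cref{prop:ood-risk-asymptotics} to scalar calculations. The fixed-point equation for $\mu$ becomes the quadratic $\mu^2+(1-\lambda-\phi)\mu-\lambda=0$, whose unique positive root gives $\mu(\lambda,\phi)$ explicitly, while $\tv=\phi/((1+\mu)^2-\phi)$. Using the identity $1+\tv=(1+\mu)^2/((1+\mu)^2-\phi)$ to fold the $(1+\tv)$ factor into $\sB$, the two components reduce to
\begin{align*}
    \sB(\lambda,\phi)=\frac{\mu^2\|\bbeta\|_2^2}{(1+\mu)^2-\phi},
    \qquad
    \sV(\lambda,\phi)=\frac{\sigma^2\phi}{(1+\mu)^2-\phi},
\end{align*}
with the denominator strictly positive because $\lambda>0>\lambda_{\min}(\phi)$.

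The central tool for Parts~1 and~2 is implicit differentiation of the quadratic in $\mu$, which yields the clean identity $\partial_\phi\mu=\mu(1+\mu)/((1+\mu)^2-\phi)>0$; hence $\mu$ is strictly increasing in $\phi$, and the fixed point also gives the inverse $\phi=(1+\mu)(\mu-\lambda)/\mu$. Plugging $\partial_\phi\mu$ into the chain-rule expressions for $\partial_\phi\sB$ and $\partial_\phi\sV$, clearing the common denominators, and substituting $\phi=(1+\mu)(\mu-\lambda)/\mu$ in the numerators reduces each sign to that of a low-degree polynomial in $(\mu,\lambda)$. For the non-monotone component, the resulting polynomial should isolate a linear factor in $(\phi-\lambda-1)$, pinning down the critical point claimed in Part~1; for the monotone component, the polynomial is of one sign on $\mu>0$ by direct inspection.

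For Part~3, I would write $\partial_\phi\sR=\partial_\phi\sB+\partial_\phi\sV$ and choose $(\sigma^2,\phi)$ so that the contribution from the non-monotone component dominates. Concretely, fix $\phi$ slightly past $\lambda+1$ (where that component has strictly negative slope) and scale $\sigma^2$ toward the appropriate extreme of $(0,\infty)$ — so that the component linear in $\sigma^2$ either vanishes (isolating the negative $\phi$-derivative) or blows up (amplifying the negative $\phi$-derivative), whichever direction matches the non-monotone component. Since the closed forms above make the slope's coefficient explicit as a function of $\sigma^2$ and $\mu$, the joint slope can be driven below any prescribed $-\epsilon$, giving the max-min bound in~\eqref{eq:nonmonotonicity-risk}.

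The main obstacle is the polynomial bookkeeping in Parts~1 and~2: after substituting $\phi=(1+\mu)(\mu-\lambda)/\mu$ into the derivative numerators, one must factor out $(\phi-\lambda-1)$ (or its $\mu$-equivalent) without dropping factors of $\mu$ or $(1+\mu)$, and verify there is no spurious second root in $\mu>0$. A secondary subtlety in Part~3 is confirming that the chosen $\sigma^2$-scaling yields an \emph{unbounded} negative slope, so that $\partial_\phi\sR\le-\epsilon$ can be achieved for \emph{every} $\epsilon>0$ and not just small $\epsilon$; this follows from the explicit closed forms since the $\sigma^2$-coefficient in $\partial_\phi\sV$ is bounded away from zero in magnitude on a neighborhood of $\phi=\lambda+1$.
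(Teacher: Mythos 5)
Your overall strategy is exactly the paper's: exploit the isotropic closed form for the fixed point, write $\sB$ and $\sV$ as rational functions of $\mu$, differentiate implicitly, and reduce monotonicity to a polynomial sign check. Your scalar formulas are all correct:
\[
    \sB(\lambda,\phi)=\frac{\mu^2\|\bbeta\|_2^2}{(1+\mu)^2-\phi},
    \qquad
    \sV(\lambda,\phi)=\frac{\sigma^2\phi}{(1+\mu)^2-\phi},
    \qquad
    \frac{\partial\mu}{\partial\phi}=\frac{\mu(1+\mu)}{(1+\mu)^2-\phi}=\frac{\mu^2}{\mu^2+\lambda},
\]
and the substitution $\phi=(1+\mu)(\mu-\lambda)/\mu$ (equivalently $(1+\mu)^2-\phi=(1+\mu)(\mu^2+\lambda)/\mu$) is what the paper uses to collapse the derivative numerators to polynomials in $(\mu,\lambda)$.

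However, there is a sign of trouble in the part of your plan that you describe as "the polynomial bookkeeping": if you carry it out, you will find that the non-monotone component is the \emph{variance} $\sV$, not the bias $\sB$. Concretely, the numerator of $\partial_\phi\sB$ simplifies (after the substitution above) to a positive multiple of $\mu^2\bigl(\mu^2+2\lambda\mu+3\lambda\bigr)$, which has no positive root, so $\sB$ is strictly increasing on all of $\phi\in(0,\infty)$; whereas the numerator of $\partial_\phi\sV$ is a positive multiple of $-(\mu^2-2\lambda\mu-\lambda)$, which vanishes at $\mu^*=\lambda+\sqrt{\lambda(\lambda+1)}$, i.e.\ precisely at $\phi=\lambda+1$. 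This is consistent with the paper's own proof (their Part (1) treats $\alpha^2=0$, i.e.\ the variance alone, and gets the increasing-then-decreasing behavior; their Part (2) treats $\sigma^2=0$, i.e.\ the bias alone, and gets strict increase), but it is the opposite of the labeling in the theorem statement, which appears to have the two items transposed. As written, your plan ties the $(\phi-\lambda-1)$ factor to the bias because you are following the statement, and that step would fail; once you swap the roles the computation goes through exactly as you sketch. For Part~3 your hedged phrasing ("whichever direction matches") resolves correctly once you identify $\sV$ as the non-monotone, $\sigma^2$-linear component: fix any $\phi>\lambda+1$, note $\partial_\phi\sV=-c\,\sigma^2$ with $c>0$ while $\partial_\phi\sB$ is a fixed positive number independent of $\sigma^2$, and send $\sigma^2\to\infty$ to drive $\partial_\phi\sR\to-\infty$, which is precisely what the paper does.
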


When $\sigma^2$ is small, \Cref{thm:nonmonotonicity} implies that the risk at any fixed $\lambda$ is non-monotonic, even when $\bSigma = \bI$.
It also explains the lack of risk monotonicity in ridgeless regression.
This result extends known results on non-monotonic behavior of variance of ridgeless regression \citep{yang2020rethinking}.

\subsection{Connection to Subsampling and Ensembling}
\label{sec:proof_through_subsample_ridge_equivalences}

We now briefly discuss the connection between subsampling and ridge regularization \citep{patil2023generalized}, used to prove the OOD risk monotonicity results in \Cref{sec:optimal_risk_monotonicity}.
To incorporate negative regularization and OOD risks, we extend these equivalences using tools from \citet{lejeune2022asymptotics}.

Before presenting the extended equivalence results, we introduce several quantities related to the subsampled ridge ensembles.
For an index set $\cI \subseteq [n]$ of size $k$, let $\bL_{\cI} \in \RR^{n \times n}$ be a diagonal matrix with $i$-th diagonal $1$ if $i \in \cI$ and $0$ otherwise. 
The feature matrix and the response vector associated with a subsampled dataset $\{(\bx_i, y_i): i \in \cI \}$ are $\bL_{\cI} \bX$ and $\bL_{\cI} \by$, respectively.
Given a ridge penalty $\lambda$, let $\hbeta^{\lambda}_{k}(\cI)$ denote the ridge estimator fitted on the subsample $(\bL_{\cI} \bX, \bL_{\cI} \by)$, consisting of $k$ samples.

When we aggregate the estimators fitted on all subsampled datasets of size $k$, we obtain the so-called \emph{full-ensemble} estimators 
$\hbeta^{\lambda}_{k,\infty},
$
which is almost surely $\EE[\hbeta^{\lambda}_{k}(\cI)\mid \bX,\by]$, if we draw $\cI$ independently from the set of index sets of size $k$.
As $k,n,p\rightarrow\infty$ such that $p/n\rightarrow\phi\in(0,\infty)$ and $p/k\rightarrow\psi\in[\phi,\infty]$, \Cref{thm:monotonicty-gen} implies the OOD risk equivalence:
$
R(\hbeta^{\lambda}_{k,\infty}) \asympequi \sR(\lambda, \phi,\psi)
$ as in \eqref{eq:R_p}.
When $\psi=\phi$, the equivalent $\sR(\lambda, \phi,\phi)$ reduces to $\sR(\lambda, \phi)$ defined in \eqref{eq:risk-det-equiv} and analyzed in previous sections.
We are now ready to present our main result in this section.

\begin{restatable}
    [Optimal ensemble versus ridge regression with negative regularization]
    {theorem}
    {ThmNegativeLam}
    \label{thm:negative-lam}
    Let $\sR^{**} := \min_{\psi \ge \phi, \lambda \ge \lambda_{\min}(\phi)}\sR(\lambda, \phi,\psi)$.
    Then the following statements hold:
    \begin{enumerate}[leftmargin=4mm,itemsep=0pt,font=\normalfont]
        \item \emph{(Underparameterized)} When $\phi < 1$ and $\bbeta_0=\bbeta$, $\lambda^*\geq 0$, 
        {\begin{align*}
            \sR^{**}
            =
            \min_{\lambda \ge 0} \sR(\lambda, \phi,\phi)
            =
            \min_{\psi \ge \phi}
            \sR(0;\phi,\psi).
        \end{align*}}
        
        \item \emph{(Overparameterized)} When $\phi \geq 1$, $\lambda^*\geq \lambda_{\min}(\phi)$,
        {\[
            \sR^{**}
            =
            \min_{\lambda \ge \lambda_{\min}(\phi)} \sR(\lambda, \phi,\phi)
            =
            \min_{\psi \ge \phi}
            \sR(\lambda_{\min}(\phi);\phi,\psi).
        \]}
    \end{enumerate}
\end{restatable}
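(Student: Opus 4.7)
The plan is to leverage the subsample-to-ridge equivalence previewed in Section~\ref{sec:proof_through_subsample_ridge_equivalences}, extended from $\lambda \ge 0$ to the full feasible range $\lambda \ge \lambda_{\min}(\phi)$ and to OOD test distributions. The key identity to establish is that for every $(\lambda, \psi)$ with $\psi \ge \phi$ and $\lambda$ admissible, the full-ensemble asymptotic risk satisfies
\[
    \sR(\lambda, \phi, \psi) \;=\; \sR(\tilde\lambda(\lambda, \psi),\, \phi,\, \phi),
    \quad\text{where}\quad
    \tilde\lambda(\lambda, \psi) = \lambda + (\psi - \phi)\,\mu\,\otr\bigl[\bSigma(\bSigma+\mu\bI)^{-1}\bigr].
\]
This comes from matching the implicit-regularization fixed-point at $(\lambda, \psi)$ with the corresponding fixed-point at $(\tilde\lambda, \phi)$, and then observing that each piece of the decomposition in Proposition~\ref{prop:ood-risk-asymptotics} (bias, variance, regression-shift bias, and irreducible error) depends on $(\lambda, \phi, \psi)$ only through the scalars $\mu$ and $\tv$.

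Granted this equivalence, the joint minimization reduces as follows. The map $(\lambda, \psi) \mapsto \tilde\lambda$ is continuous, satisfies $\tilde\lambda \ge \lambda$ because $\mu\,\otr[\bSigma(\bSigma+\mu\bI)^{-1}] \ge 0$ and $\psi \ge \phi$, collapses to the identity on the slice $\psi = \phi$, and diverges to $+\infty$ as $\psi \to \infty$ for each fixed admissible $\lambda$. Consequently, its image on the feasible set is exactly $[\lambda_{\min}(\phi), \infty)$, which yields $\sR^{**} = \min_{\lambda \ge \lambda_{\min}(\phi)} \sR(\lambda, \phi, \phi)$. This is already the first equality in Part 2 and, after restricting the range using the sign result $\lambda^* \ge 0$ from Theorem~\ref{thm:stationary-point-no-shift}\eqref{thm:stationary-point-no-shift-item-underparameterized}, the first equality in Part 1. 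Applying the same continuity/monotonicity argument to any vertical slice $\lambda = \lambda_0$ shows that $\{\sR(\tilde\lambda, \phi, \phi) : \tilde\lambda \ge \lambda_0\}$ is exactly traced by $\psi \mapsto \sR(\lambda_0, \phi, \psi)$; choosing $\lambda_0 = 0$ in the underparameterized case and $\lambda_0 = \lambda_{\min}(\phi)$ in the overparameterized case produces the remaining equalities.

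The principal technical obstacle is establishing the subsample-to-ridge equivalence below the non-negative range. The existing equivalences of \citet{patil2023generalized,du2023gcv} require $\lambda \ge 0$; extending them down to $\lambda_{\min}(\phi)$ requires showing that the fixed-point solutions $\mu(\lambda,\phi)$ and $\tv(\lambda,\phi)$ remain real-analytic and strictly monotone on $(\lambda_{\min}(\phi), \infty)$, which is precisely the analytic-continuation ingredient furnished by \citet{lejeune2022asymptotics} and the properties developed in Section~\ref{sec:analytic-properties-fp-sols}. A secondary point, needed for Part 2, is that $\lambda_{\min}$ is monotone in the aspect ratio for $\phi \ge 1$, so $\lambda_{\min}(\phi) \ge \lambda_{\min}(\psi)$ for all $\psi \ge \phi$; this keeps the subsampled ridge fit at $\lambda = \lambda_{\min}(\phi)$ well defined for every $\psi \ge \phi$, and follows directly from Definition~\ref{def:lower_bound_negative_regularization} by differentiating the defining relation for $\mu_{\min}$ in $\phi$.
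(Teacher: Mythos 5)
Your approach is the paper's approach: the explicit map $\tilde\lambda(\lambda,\psi) = \lambda + (\psi-\phi)\,\mu\,\otr[\bSigma(\bSigma+\mu\bI)^{-1}]$ is exactly the parameterization of the constant-$\mu$ paths in Lemma~\ref{lem:v-equiv-path}, and the reduction to a one-dimensional minimization over either the diagonal slice or the boundary slice is what the paper's proof does. Two small slips are worth flagging, however.

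First, you justify $\tilde\lambda \ge \lambda$ by asserting $\mu\,\otr[\bSigma(\bSigma+\mu\bI)^{-1}] \ge 0$, but when $\phi<1$ and $\lambda<0$ (both permitted in the feasible set defining $\sR^{**}$), Lemma~\ref{lem:fixed-point-v-lambda-properties} gives $\mu(\lambda,\phi) < 0$, so the asserted sign fails. The inequality $\tilde\lambda \ge \lambda$ is not what you actually need; the correct observation is that $\tilde\lambda$ solves $\mu(\tilde\lambda,\phi)=\mu(\lambda,\psi)$, hence lies in $[\lambda_{\min}(\phi),\infty)$ directly from Definition~\ref{def:lower_bound_negative_regularization} (together with the monotonicity of $\mu_0(\cdot)$ in Lemma~\ref{lem:v0}, which guarantees $\mu(\lambda,\psi)\ge\mu_0(\psi)\ge\mu_0(\phi)$). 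Second, for the step "restrict to $\lambda\ge0$," you cite Theorem~\ref{thm:stationary-point-no-shift}~\eqref{thm:stationary-point-no-shift-item-underparameterized}, which assumes $(\bSigma_0,\bbeta_0)=(\bSigma,\bbeta)$. Part~1 of the present theorem only assumes $\bbeta_0=\bbeta$ and allows $\bSigma_0\ne\bSigma$, so the correct reference is Theorem~\ref{thm:stationary-point-cov-shift}~\eqref{thm:stationary-point-cov-shift-item-underparameterized} (alternatively, argue directly from Lemma~\ref{lem:ood-risk-deri} that the bias is minimized and the variance is strictly decreasing at $\lambda=0$ when $\mu(0,\phi)=0$, as the paper does). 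Neither slip changes the conclusion, but both would need to be fixed before the argument is airtight.
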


\Cref{thm:negative-lam} establishes the OOD risk equivalences between ridge predictors and full-ensemble ridgeless predictors.
It shows that in the underparameterized regime, ridgeless ensembles are sufficient to achieve optimal in-distribution risk, which is also supported by \citet{du2023gcv} when considering optimization over $\lambda\geq 0$.
However, in the overparameterized regime, ridgeless ensembles alone are not enough to achieve optimal risk over $\lambda\geq \lambda_{\min}(\phi)$.
Explicit negative regularization is required to obtain the optimal risk.
In other words, the implicit regularization provided by subsampling is always positive, and under certain data geometries, using $\lambda<0$ can improve predictive performance.

\begin{figure}[!t]
    \centering
    \includegraphics[width=0.99\textwidth]{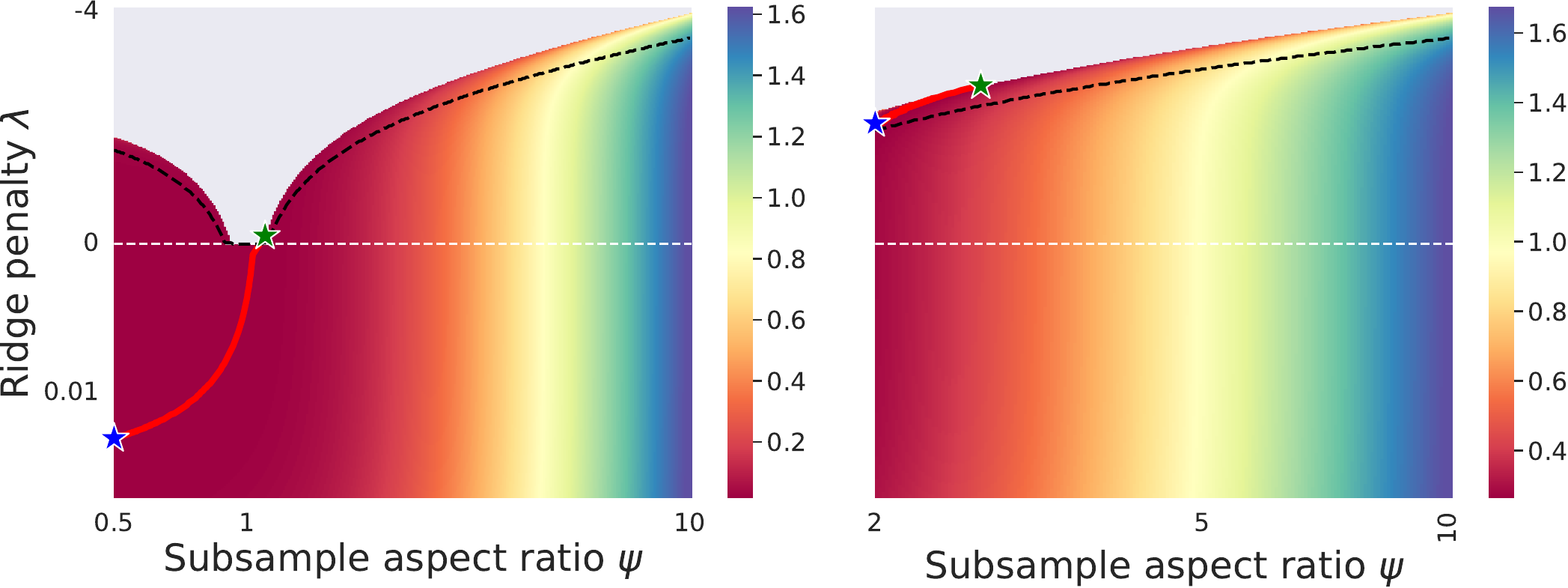}
    \caption{
    \textbf{Negative regularization can help achieve optimal risk in both underparameterized and overparameterized regimes.}
    The heatmap illustrates the prediction risks for ridge regression as a function of the ridge penalty $\lambda$ and subsample aspect ratio $\psi$ in the full ensemble. 
    We use the same data model as \Cref{fig:optimal-risk} with $\sigma^2=0.01$. 
    The left and right panels show the underparameterized ($\phi=0.5$) and overparameterized regimes ($\phi=2$), respectively. 
    The red paths represent the optimal risks, while the blue and green stars indicate the optimal ridge predictor and the optimal full-ensemble ridge with the largest subsample aspect ratio.
    }
    \label{fig:advantage_ensembling}
\end{figure}

The proof of \Cref{thm:negative-lam} establishes the risk equivalence between the ridge predictor and the full-ensemble ridgeless predictor.
As shown in \Cref{fig:advantage_ensembling}, the risk profile of the full-ensemble ridge predictors demonstrates that negative regularization can help achieve optimal risk, particularly in the overparameterized regime when the subsample aspect ratio $\psi$ is greater than $1$.

\section{Discussion}
\label{sec:discussion}

This paper investigates the optimal regularization and the optimal risk of ridge regression in the OOD setting.
The analysis shows the differences between the in-distribution and OOD settings. 
However, in both cases, the optimal risk is monotonic in the data aspect ratio. 
The main takeaway is that negative regularization can improve predictive performance in certain data geometries, even more so than the in-distribution setting.

We close the paper by mentioning two future directions.
The scope of this paper is limited to standard ridge regression.
Under proportional asymptotics, similar conclusions are expected to hold for kernel ridge regression, as the gram matrix linearizes in the sense of asymptotic equivalence \citep{liang2020just, bartlett_montanari_rakhlin_2021, sahraee2022kernel,misiakiewicz2023six}.
Beyond ridge variants, it is of interest to perform a similar analysis for the lasso.
Preliminary investigations in the literature suggest that, similar to optimal ridge regression, the optimal lasso also exhibits monotonic risk in the overparameterization ratio \citep{patil2022mitigating}. 
For a comparative illustration, see \Cref{fig:ridge-monotonicity,fig:lasso-monotonicity}.
It is also of interest to understand the monotonicity properties of the optimal risk for other convex penalized M-estimators in general \citep{thrampoulidis2018precise}.
Empirical investigations indicate that these estimators also exhibit monotonic risk in the data aspect ratio under optimal regularization \citep{patil2023generalized}. 
Making these observations rigorous is a promising direction on which we hope to report in the near future.

\section*{Acknowledgments}
We thank Edgar Dobriban, Avi Feller, Qiyang Han, Arun Kuchibhotla, Dmitry Kobak, Daniel LeJeune, Jaouad Mourtada, Alessandro Rinaldo, Alexander Wei, and Yuting Wei for helpful conversations surrounding this work.
We thank the anonymous reviewers for their valuable feedback and suggestions that have improved the paper (in particular, the addition of \Cref{sec:introduction-details}).
We also thank the computing support provided by the ACCESS allocation MTH230020 for some of the experiments performed on the Bridges2 system at the Pittsburgh Supercomputing Center.
PP and RJT were supported by ONR grant N00014-20-1-2787 during the course of this work.


\counterwithin{theorem}{section}
\counterwithin{remark}{section}
\counterwithin{proposition}{section}
\counterwithin{lemma}{section}
\counterwithin{fact}{section}
\renewcommand{\thefigure}{\thesection\arabic{figure}}

\newpage
\appendix

\newgeometry{left=0.5in,top=0.25in,right=0.5in,bottom=0.25in,head=.1in,foot=0.1in}

\begin{center}
\Large
{\bf \framebox{Supplement}}
\end{center}

\bigskip

This supplement serves as a companion to the paper titled ``\titletext''.
In the first section, we provide an outline of the supplement in \Cref{tab:outline-supplement}.
We also include a summary of the general and specific notation used in the main paper and the supplement in \Cref{tab:notation_general,tab:notation_specific}, respectively.
In addition, in \Cref{tab:ref}, we provide an explanation of and pointers to the abbreviations used in the references mentioned in \Cref{tab:opt_reg}.

\section{Organization and Notation}
\label{sec:organization_notation_supp}

\subsection{Organization}
\label{sec:organization_supp}

\begin{table}[!ht]
\centering
\caption{Outline of the supplement.}
\label{tab:outline-supplement}
\begin{tabularx}{\textwidth}{L{1.75cm}L{2cm}L{20cm}}
\toprule
\textbf{Section} & \textbf{Subsection} & \textbf{Purpose} \\
\midrule \addlinespace[1ex]
\multicolumn{2}{c}{Overview of supplement} \\ \arrayrulecolor{black!20} \cmidrule(lr){1-2} \addlinespace[0.5ex]
\multirow{3}{*}{\Cref{sec:organization_notation_supp}} & \Cref{sec:organization_supp} & Organization \\
& \Cref{sec:general_notation_supp} & General notation \\
& \Cref{sec:specific_notation_supp} & Specific notation \\
& \Cref{sec:reference_key} & Reference key \\
\arrayrulecolor{black!50} \midrule \addlinespace[1ex]
\multicolumn{2}{c}{Further details in \Cref{sec:introduction}} \\ \arrayrulecolor{black!20} \cmidrule(lr){1-2} \addlinespace[0.5ex]
\Cref{sec:introduction-details} & \Cref{sec:background-ridge-regression-formulation} & Background details on ridge regression with negative regularization \\ 
\arrayrulecolor{black!50}
\midrule
\multicolumn{2}{c}{Proofs in \Cref{sec:preliminaries}} \\ \arrayrulecolor{black!20} \cmidrule(lr){1-2} \addlinespace[0.5ex]
\Cref{sec:preliminaries_appendix} & \Cref{sec:prop:ood-risk-asymptotics-proof} & Proof of \Cref{prop:ood-risk-asymptotics} (out-of-distribution risk asymptotics) \\ 
\arrayrulecolor{black!50}
\midrule 
\multicolumn{2}{c}{Proofs in \Cref{sec:optimal_regularization_signs}} \\ \arrayrulecolor{black!20} \cmidrule(lr){1-2}
\addlinespace[0.5ex]
\multirow{5}{*}{\Cref{sec:proofs-sec:optimal_regularization_signs}}
& \Cref{sec:thm:stationary-point-no-shift-proof}
& Proof of \Cref{thm:stationary-point-no-shift} (sign of optimal regularization for in-distribution) 
\\
& \Cref{sec:alignment-special-cases} & Special cases of the general alignment condition in \Cref{thm:stationary-point-no-shift} \\
& \Cref{sec:thm:stationary-point-cov-shift-iso-proof} & Proof of \Cref{thm:stationary-point-cov-shift-iso} (optimal regularization with isotropic signals) \\
& \Cref{sec:thm:stationary-point-cov-shift-proof} & Proof of \Cref{thm:stationary-point-cov-shift} (sign of optimal regularization with covariate shift) \\
& \Cref{sec:thm:stationary-point-sig-shift-proof} & Proof of \Cref{thm:stationary-point-sig-shift} (sign of optimal regularization with regression shift) \\
& \Cref{sec:proofs-sec:optimal_regularization_signs-helper-lemmas} & Helper lemmas (derivatives of components of risk deterministic approximation) \\
\arrayrulecolor{black!50}
\midrule 
\multicolumn{2}{c}{Proofs in \Cref{sec:optimal_risk_monotonicity}} \\ \arrayrulecolor{black!20} \cmidrule(lr){1-2}
\addlinespace[0.5ex]
\multirow{5}{*}{\Cref{sec:proofs-sec:optimal_risk_monotonicity}}
& \Cref{sec:lem:optimal-risk-iso-signal-proof}
& 
Proof of \Cref{lem:optimal-risk-iso-signal} (optimal risk under isotropic signals)
\\
& \Cref{sec:thm:monotonicity-proof} & Proof of \Cref{thm:monotonicty} (risk monotonicity with optimal regularization) \\
& \Cref{sec:thm:nonmonotonicity-proof} & Proof of \Cref{thm:nonmonotonicity} (risk non-monotonicity with suboptimal regularization) \\
& {\Cref{sec:thm:negative-lam-proof}}
& 
Proof of \Cref{thm:negative-lam} (optimal subsample versus ridge with negative regularization)
\\
& \Cref{sec:thm:negative-lam-proof-helper-lemmas} & Helper lemmas (optimal risk monotonicities, full ensemble OOD risk equivalences) \\
\arrayrulecolor{black!50}
\midrule 
\multicolumn{2}{c}{Some technical lemmas} \\ \arrayrulecolor{black!20} \cmidrule(lr){1-2}
\addlinespace[0.5ex]
\multirow{3}{*}{\Cref{sec:analytic-properties-fp-sols}} 
& \Cref{subsec:analytic-properties-fp-sols-min-lam} & Properties of minimum limiting negative ridge penalty \\
& \Cref{subsec:analytic-properties-fp-sols-v} & Analytic properties of certain fixed-point solutions under negative regularization \\
& \Cref{subsec:analytic-properties-fp-sols-contour} & Contour of fixed-point solutions under negative regularization \\
\arrayrulecolor{black!50}
\midrule 
\multicolumn{2}{c}{Additional experiments} \\ 
\arrayrulecolor{black!20} \cmidrule(lr){1-2}
\addlinespace[0.5ex]
\multirow{3}{*}{\Cref{sec:additional_numerical_illustrations}}
& \Cref{sec:additional-illustrations-sec:optimal_regularization_signs}
& Additional illustrations for \Cref{sec:optimal_regularization_signs} \\
& \Cref{sec:additional_illustrations-sec:discussion} & Additional illustrations for \Cref{sec:discussion} \\
\arrayrulecolor{black}
\bottomrule
\end{tabularx}
\end{table}

\clearpage

\subsection{General Notation}
\label{sec:general_notation_supp}

\begin{table}[!ht]
\centering
\caption{A summary of general notation used in the paper and the supplement.}
\label{tab:notation_general}
\begin{tabularx}{\textwidth}{L{4cm}L{16cm}}
\toprule
\textbf{Notation} & \textbf{Description} \\
\midrule
\multicolumn{1}{c}{Fonts} & \\ \arrayrulecolor{black!20} 
\cmidrule(lr){1-1} \addlinespace[0.5ex]
Non-bold lower case & Denotes vectors (e.g., $a$, $b$, $c$) \\
Non-bold upper case & Denotes matrices (e.g., $\bA$, $\bB$, $\bC$) \\
Calligraphic font & Denotes sets (e.g., $\cA$, $\cB$, $\cC$) \\
Script font & Denotes certain limiting functions (e.g., $\sA$, $\sB$, $\sC$) \\
\arrayrulecolor{black!50}\midrule
\multicolumn{1}{c}{Sets} & \\ \arrayrulecolor{black!20} 
\cmidrule(lr){1-1} \addlinespace[0.5ex]
$\NN$ & Set of natural numbers \\
$(a, b, c)$ & (Ordered) tuple of elements $a$, $b$, $c$ \\
$\{a, b, c\}$ & Set of elements $a$, $b$, $c$ \\
$[n]$ & Set $\{1, \dots, n\}$ for a natural number $n$ \\
$\mathbb{R}$, $\mathbb{R}_{\ge 0}$ & Set of real and non-negative real numbers \\
$\CC$, $\CC^{+}$, $\CC^{-}$ & Set of complex numbers and upper and lower complex half-planes \\
\arrayrulecolor{black!50}\midrule
\multicolumn{1}{c}{Operations} & \\ \arrayrulecolor{black!20} 
\cmidrule(lr){1-1} \addlinespace[0.5ex]
$\tr[\bA]$, $\otr[\bA]$, $\det(\bA)$ & Trace, average trace ($\tr[\bA] / p$), and determinant of a square matrix $\bA \in \RR^{p \times p}$ \\
$\bB^{-1}$ & Inverse of an invertible square matrix $\bB \in \RR^{p \times p}$ \\
$\bC^{\dagger}$ & Moore-Penrose inverse of a general rectangular matrix $\bC \in \RR^{n \times p}$ \\
$\mathrm{rank}(\bC)$, $\mathrm{null}(\bC)$ & Rank and nullity of a general rectangular matrix $\bC \in \RR^{n \times p}$ \\
$\bD^{1/2}$ & Principal square root of a positive semidefinite matrix $\bD$ \\
$f(\bD)$ & Matrix obtained by applying a function $f : \RR \to \RR$ to a positive semidefinite matrix $\bD$ \\
$\bI$, $\bm{1}$, $\bm{0}$ & The identity matrix, the all-ones vector, the all-zeros vector \\
\arrayrulecolor{black!50}\midrule
\multicolumn{1}{c}{Norms} & \\ \arrayrulecolor{black!20} 
\cmidrule(lr){1-1} \addlinespace[0.5ex]
$\| \bu \|_{q}$ & The $\ell_q$ norm of a vector $\bu$ for $q \ge 1$ \\
$\| \bu \|_{\bA}$ & The induced $\ell_2$ norm of a vector $\bu$ with respect to a positive semidefinite matrix $\bA$ \\
$\|f\|_{L_q}$ & The $L_q$ norm of a function $f$ for $q \ge 1$ \\
$\| \bA \|_{\mathrm{op}}$ & Operator (or spectral) norm of a real matrix $\bA$ \\
$\| \bA \|_{\mathrm{tr}}$ & Trace (or nuclear) norm of a real matrix $\bA$ \\
$\| \bA \|_F$ & Frobenius norm of a real matrix $\bA$ \\
\arrayrulecolor{black!50}\midrule
\multicolumn{1}{c}{Orders} & \\ \arrayrulecolor{black!20} 
\cmidrule(lr){1-1} \addlinespace[0.5ex]
$X = \cO_\upsilon(Y)$ & $| X | \le C_\upsilon Y$ for some constant $C_\upsilon$ that may depend on the ambient parameter $\upsilon$  \\
$X \lesssim_\upsilon Y$ & $ X \le C_\upsilon Y$ for some constant $C_\upsilon$ that may depend on the ambient parameter $\upsilon$ \\
$\bu \le \bv$ & Lexicographic ordering for vectors $\bu$ and $\bv$  \\
$\bA \preceq \bB$ & Loewner ordering for symmetric matrices $\bA$ and $\bB$ \\
\arrayrulecolor{black!50}\midrule
\multicolumn{1}{c}{Asymptotics} & \\ \arrayrulecolor{black!20} 
\cmidrule(lr){1-1} \addlinespace[0.5ex]
$\Op$, $\op$ & Probabilistic big-O and little-o notation \\
$\bC \asympequi \bD$ & Asymptotic equivalence of matrices $\bC$ and $\bD$ (see \Cref{sec:preliminaries} for more details) \\
$\dto$, $\pto$, $\asto$ & Denotes convergence in distribution, probability, and almost sure convergence \\
\arrayrulecolor{black}\bottomrule
\end{tabularx}
\end{table}

Some additional conventions used throughout the supplement:
\begin{itemize}[nosep]
    \item 
    If a proof of a statement is separated from the statement, the statement is restated (while keeping the original numbering) along with the proof for convenience.
    \item
    If no subscript is present for the norm $\| \bu \|$ of a vector $\bu$, it is assumed to be the $\ell_2$ norm of $\bu$.
\end{itemize}

\clearpage
\subsection{Specific Notation}
\label{sec:specific_notation_supp}

\begin{table}[!ht]
\centering
\caption{A summary of specific notation used in the paper and the supplement.}
\label{tab:notation_specific}
\begin{tabularx}{\textwidth}{L{3.cm}L{16cm}}
\toprule
\textbf{Notation} & \textbf{Description} \\
\midrule
\multicolumn{1}{c}{In-distribution} & \\ \arrayrulecolor{black!20} 
\cmidrule(lr){1-1} \addlinespace[0.5ex]
$P_{\bx}$, $P_{y \mid \bx}$, $P_{\bx,y}$ & Distribution of the train features supported on $\RR^{p}$, conditional distribution of the train response supported on $\RR$, and the joint distribution  \\
$\bSigma$ & Covariance matrix in $\RR^{p \times p}$ of the train feature distribution \\
$r_{\min}$ ($r_{\max}$) & Minimum (maximum) eigenvalue of the train covariance matrix \\
$\sigma^2,\alpha^2$ & Conditional variance and linearized signal energy of the train response distribution \\
$\SNR$ & In-distribution signal-to-noise ratio $\alpha^2/\sigma^2$ \\
$\bbeta$ & Coefficients of the (population) linear projection of $y$ onto $\bx$ \\
$\{ (\bx_i,y_i) \}_{i=1}^{n}$ & Train samples of size $n$ in $\RR^{p} \times \RR$ sampled i.i.d.\ from $P_{\bx,y}$ \\
$\bX,\by$ & Train feature matrix in $\RR^{n\times p}$ and the corresponding response vector in $\RR^n$ \\
$\hbeta^{\lambda}$ & Ridge estimator fitted on train data $(\bX, \by)$ at regularization level $\lambda$ \\
$\hat{\bbeta}^\lambda_{k}(\cI)$ & Ridge estimator fitted on subsampled data $(\bL_{\cI} \bX, \bL_{\cI} \by)$ at the regularization level $\lambda$ \\
$\hat{\bbeta}^{\lambda}_{k,\infty}$ & Full-ensemble ridge estimator at regularization level $\lambda$ \\
\arrayrulecolor{black!50}\midrule
\multicolumn{1}{c}{Out-of-distribution} & \\ \arrayrulecolor{black!20} 
\cmidrule(lr){1-1} \addlinespace[0.5ex]
$P_{\bx_0}$, $P_{y_0 \mid \bx_0}$, $P_{\bx_0, y_0}$ & Distribution of the test features supported on $\RR^{p}$, conditional distribution of the test response supported on $\RR$, and the joint distribution \\
$(\bx_0, y_0)$ & Test sample in $\RR^{p} \times \RR$ sampled from $P_{\bx_0, y_0}$ independently of the train data $(\bX, \by)$ \\
$\bSigma_0$ & Covariance matrix in $\RR^{p \times p}$ of the test feature distribution \\
$\sigma_0^2$ & Variance of the conditional test response distribution \\
$\bbeta_0$ & Coefficients of the $L_2$ linear projection of $y_0$ onto $\bx_0$: $\EE[\bx_0 \bx_0^{\top}]^{-1} \EE[\bx_0 y_0]$ \\
$R(\hbeta^{\lambda})$ & Squared test prediction risk of ridge estimator at level $\lambda$ \\
\arrayrulecolor{black!50}\midrule
\multicolumn{1}{c}{Risk parameters} & \\ \arrayrulecolor{black!20} 
\cmidrule(lr){1-1} \addlinespace[0.5ex]
$\phi$, $\psi$ & Limiting data and subsample aspect ratios \\
$\lambda^*$ & Optimal ridge penalty \\
$\lambda_{\min}(\phi)$, $v_{\min}$ & Minimum value of ridge regularization allowed at $\phi$ and the associated fixed-point parameter \\
$v_p(\lambda, \phi)$ & A fixed-point parameter \\
$\mu(\lambda, \phi)$ & Induced amount of implicit regularization at ridge regularization $\lambda$ and aspect ratio $\phi$ \\
$\tv_p(\lambda, \phi; \bSigma)$ & Function of a fixed-point parameter \\
$\sR(\lambda, \phi)$, $\sB(\lambda, \phi)$, $\sV(\lambda, \phi)$, $\sE(\lambda, \phi)$ & Deterministic approximation to the squared risk, the bias, the variance, and the regression shift \\
$\kappa^2$ & Inflated out-of-distribution irreducible error \\
$\bSigma_{\bbeta}$ & A certain matrix capturing the alignment of feature covariance and signal vector $\bSigma\bbeta\bbeta^{\top}$\\
$\sR(\lambda, \phi, \psi)$ & Deterministic approximation to the risk of the full-ensemble estimator \\
\arrayrulecolor{black} \bottomrule
\end{tabularx}
\end{table}

\subsection{Reference Abbreviation Key}
\label{sec:reference_key}

\begin{table}[!ht]
    \centering
    \caption{Links to references mentioned in \Cref{tab:opt-reg}.}
    \begin{tabular}{cc}
        \toprule
        Abbreviation & Reference \\\midrule
            \hypertarget{ref:HMRT}{HMRT}
            & \citet*{hastie2022surprises}\\
         \hypertarget{ref:DW}{DW} & \citet*{dobriban_wager_2018} \\
         \hypertarget{ref:RMR}{RMR} &  \citet*{richards_mourtada_rosasco_2021}\\
         \hypertarget{ref:WX}{WX} & \citet*{wu_xu_2020}\\
         \hypertarget{ref:D}{D} & \citet*{dicker2016ridge}\\
         \bottomrule
    \end{tabular}
    \label{tab:ref}
\end{table}

\restoregeometry

\clearpage
\section{Further Details in \Cref{sec:introduction}}
\label{sec:introduction-details}

\subsection{Background Details on Ridge Regression with Negative Regularization}
\label{sec:background-ridge-regression-formulation}

    Recall that the ridge regression estimator $\widehat{\beta}_\lambda \in \mathbb{R}^p$, based on the training data $X,y$, is defined as the solution to the following regularized least squares problem:
    \begin{equation}
        \label{eq:ridge-minimization-problem-primal}
        \mathop{\mathrm{minimize}}_{\beta\in \mathbb{R}^p} \,
        \underbrace{\frac{1}{n} \|y - X\beta\|_2^2}_{\substack{\cL(\beta)}} + \lambda \underbrace{\vphantom{\frac{1}{n} \|y - X\beta\|_2^2} \|\beta\|_2^2}_{\substack{\cR(\beta)}}. \tag{P}
    \end{equation}
    Here, $\lambda$ is a regularization parameter.
    For ease of reference in the following, denote the least squares loss by $\cL$ and $\ell_2$ regularization by $\cR$.
    This is referred to as the primal form of ridge regression, denoted by \eqref{eq:ridge-minimization-problem-primal}.
    One can write a Lagrangian dual problem of \eqref{eq:ridge-minimization-problem-primal} that optimizes weights $\theta \in \mathbb{R}^{n}$ over the data points as:
    \begin{equation}
        \label{eq:ridge-minimization-problem-dual}
        \mathop{\mathrm{minimize}}_{\theta \in \mathbb{R}^n} \;
        \theta^\top (X X^\top / n + \lambda I_n) \theta / 2 - \lambda \theta^\top y / n. \tag{D}
    \end{equation}
    This is referred to as the dual form of ridge regression, denoted by \eqref{eq:ridge-minimization-problem-dual}.
    Let us denote the solution of this problem by $\widehat{\theta}_\lambda$.
    The estimator $\widehat{\beta}$ is linked to the dual weights as $\lambda \widehat{\beta}_\lambda = X^\top \widehat{\theta}_\lambda$.

    Now, there are three cases depending on whether $\lambda$ is positive, zero, or negative.

    \begin{itemize}
        \item When $\lambda > 0$, regardless of whether $n \ge p$ or $p > n$, both problems \eqref{eq:ridge-minimization-problem-primal} and \eqref{eq:ridge-minimization-problem-dual} are strictly convex and lead to the following unique ridge estimator (expressed in primal and dual forms, respectively):
        \begin{equation}
            \label{eq:ridge-primal-dual-solutions}
            \widehat{\beta}_\lambda 
            = (X^\top X / n + \lambda I_p)^{-1} X^\top y / n 
            = X^\top (X X^\top / n + \lambda I_n)^{-1} y / n.
        \end{equation}

        \item When $\lambda = 0$ (more precisely, defined through analytic continuation as $\lambda \to 0^{+}$) and $X^\top X$ is full rank (which is the case when $n \ge p$ and the design $X$ has independent columns), the optimization problem \eqref{eq:ridge-minimization-problem-primal} is still strictly convex and leads to the following unique solution:
        \[
            \widehat{\beta}_0 = (X^\top X / n)^{-1} X^\top y / n.
        \]

        \item When $\lambda = 0$ (again, defined using analytic continuation as $\lambda \to 0^{+}$) and $X X^\top$ is full rank ($p > n$ and the design $X$ has independent rows), the optimization problem \eqref{eq:ridge-minimization-problem-dual} is still strictly convex and leads to the following unique solution:
        \[
            \widehat{\beta}_0 = X^\top (X X^\top / n)^{-1} y / n.
        \]

        \item More generally, when $n \ge p$ and $\lambda \ge - s_{\min}$, where $s_{\min}$ is the smallest eigenvalue of $X^\top X / n$, the optimization problem \eqref{eq:ridge-minimization-problem-primal} is still strictly convex and leads to the primal solution in \eqref{eq:ridge-primal-dual-solutions}.

        \item Similarly, when $p > n$ and $\lambda \ge - s_{\min}$, where $s_{\min}$ is the smallest eigenvalue of $X X^\top / n$, the dual to the optimization problem \eqref{eq:ridge-minimization-problem-dual} is strictly convex and leads to the dual solution in \eqref{eq:ridge-primal-dual-solutions}.
    \end{itemize}
    
    By defining the ridge estimator as:
    \[
    \widehat{\beta}_\lambda 
    = (X^\top X / n + \lambda I_p)^{\dagger} X^\top y / n
    = X^\top (X X^\top / n + \lambda I_n)^{\dagger} y / n,
    \]
    where $A^{\dagger}$ denotes the Moore-Penrose pseudoinverse of the matrix $A$, we can handle all the cases mentioned above simultaneously.
    To gain intuition, we visually compare the ridge solutions with positive and negative regularization in \Cref{fig:negative_ridge_visualize_underparam,fig:negative_ridge_visualize_overparam} for the under- and over-parameterized regimes, respectively.
    See \cite{hua1983generalized,bjorkstrom1999generalized} also for some related discussion.

\clearpage

\newgeometry{left=1in,top=0.75in,right=1in,bottom=0.5in,head=.1in,foot=0.1in}

\subsubsection{Visualizing ridge regression solutions for positive versus negative regularization (underparameterized regime)}

\bigskip

\begin{figure*}[!ht]
    \centering
    \includegraphics[width=0.49\textwidth]{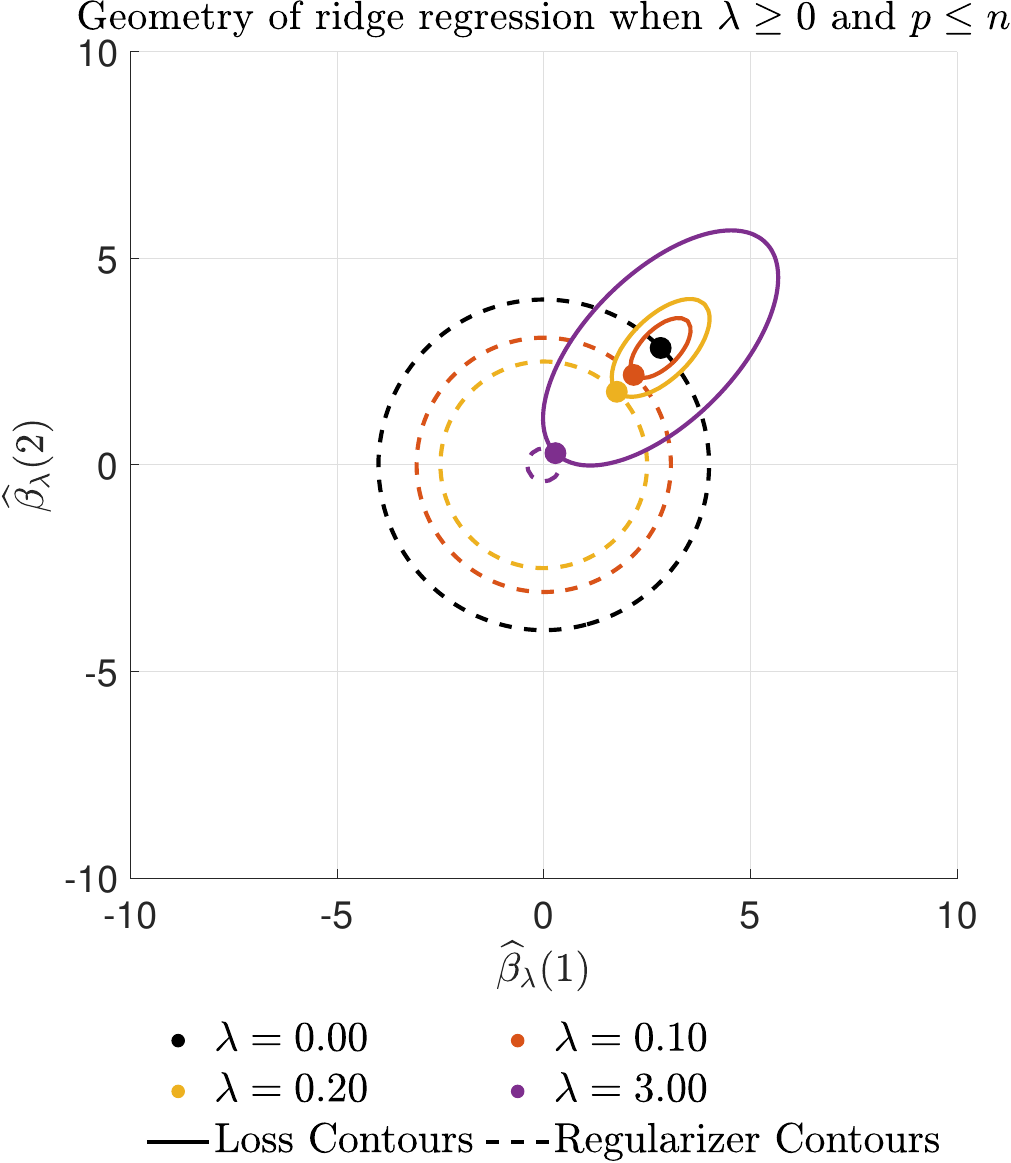}
    \includegraphics[width=0.49\textwidth]{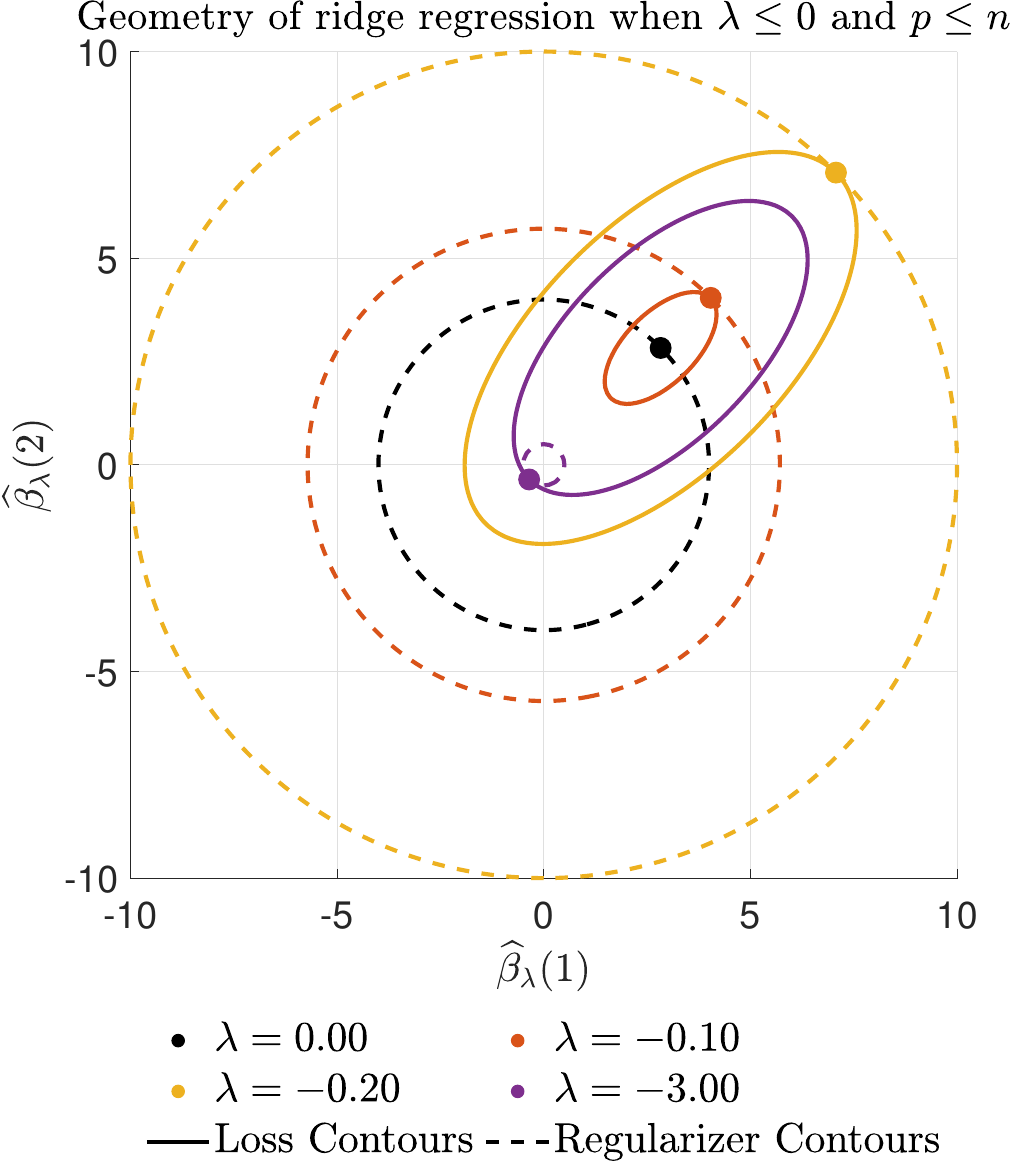}
    \caption{
        \textbf{Comparing the geometry of ridge solutions for positive versus negative regularization levels $\lambda$ in the underparameterized regime when $p \le n$.} 
        We consider a two-dimensional underparameterized problem with a design matrix $X$ having the smallest eigenvalue of $X^\top X / n$ as $s_{\min} = 4/3$ and the largest eigenvalue as $s_{\max} = 1/3$. 
        We plot the contours of the $\ell_2$ square loss $\cL$ and the $\ell_2$ regularizer $\cR$ in the problem \eqref{eq:ridge-minimization-problem-primal} for both positive and negative values of $\lambda$. 
        For positive values of $\lambda \in [0, \infty]$, the loss contours touch the constraint contours from the \emph{outside}, while for negative values of $\lambda \in (-\infty, -s_{\max}) \cup (-s_{\min}, 0)$, they touch from the \emph{inside}.
        To better understand the trend, it helps to think of ``tying'' $+\infty$ to $-\infty$ on the real line, making it a ``projective real line''.
        The values of $\lambda$ are plotted in the following order: $\framebox{{\color{black} 0.0}} \to {\color{myredorange} 0.1} \to {\color{myyelloworange} 0.2} \to {\color{mypurple} 3.0} \to {\color{mypurple} -3.0} \to {\color{myyelloworange} -0.2} \to {\color{myredorange} -0.1} \to \framebox{{\color{black} 0.0}}$.
    }
    \label{fig:negative_ridge_visualize_underparam}
\end{figure*}

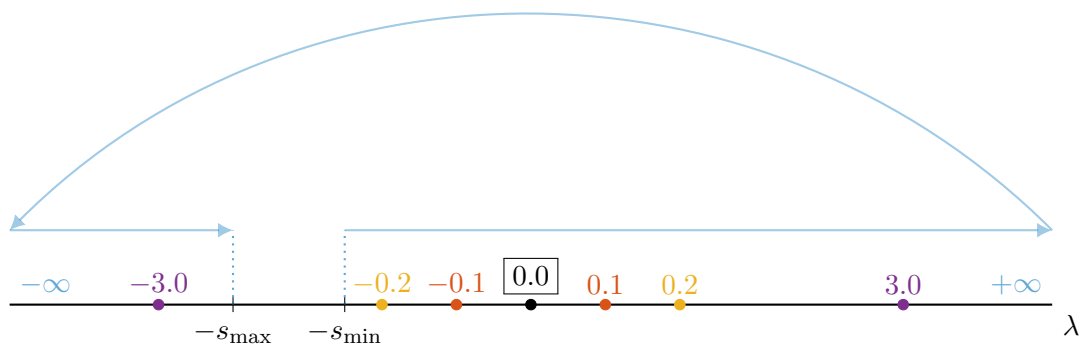
\begin{figure}[!ht]
    \centering
    \begin{tikzpicture}[scale=0.99]

    \draw[thick] (-7,0) -- (7,0) node[anchor=north west] {$\lambda$};
    
    \filldraw[black] (0,0) circle (2pt) node[anchor=south] {\framebox{$0.0$}};
    
    \draw[myblue, thick, ->, opacity=0.375] (7,1) to[out=135,in=45] (-7,1);
    \node at (7,0) [anchor=south east, myblue, opacity=0.625] {$+\infty$};
    \node at (-7,0) [anchor=south west, myblue, opacity=0.625] {$-\infty$};
    
    \draw (-4,0.1) -- (-4,-0.1) node[anchor=north] {$-s_{\max}$};
    \draw (-2.5,0.1) -- (-2.5,-0.1) node[anchor=north] {$-s_{\min}$};
    
    \filldraw[mypurple] (-5,0) circle (2pt) node[anchor=south] {$-3.0$};
    \filldraw[myyelloworange] (-2,0) circle (2pt) node[anchor=south] {$-0.2$};
    \filldraw[myredorange] (-1,0) circle (2pt) node[anchor=south] {$-0.1$};
    \filldraw[myredorange] (1,0) circle (2pt) node[anchor=south] {$0.1$};
    \filldraw[myyelloworange] (2,0) circle (2pt) node[anchor=south] {$0.2$};
    \filldraw[mypurple] (5,0) circle (2pt) node[anchor=south] {$3.0$};
    
    \draw[myblue, thick, ->, opacity=0.375] (-2.5,1) -- (7,1);
    \draw[myblue, thick, ->, opacity=0.375] (-7,1) -- (-4,1);

    \draw[myblue, dotted, thick, opacity=0.625] (-4,0) -- (-4,1);
    \draw[myblue, dotted, thick, opacity=0.625] (-2.5,0) -- (-2.5,1);

    \end{tikzpicture}
    \caption{Illustration of the sequence of ridge regularization levels $\lambda$ used in \Cref{fig:negative_ridge_visualize_underparam} and the corresponding real projective line. (Note: The $\lambda$ values are not necessarily drawn to scale.)}
    \label{fig:negative_ridge_visualize_underparam_sequence}
\end{figure}

\clearpage
\subsubsection{Visualizing ridge regression solutions for positive versus negative regularization (overparameterized regime)}

\bigskip

\begin{figure*}[!ht]
    \centering
    \includegraphics[width=0.49\textwidth]{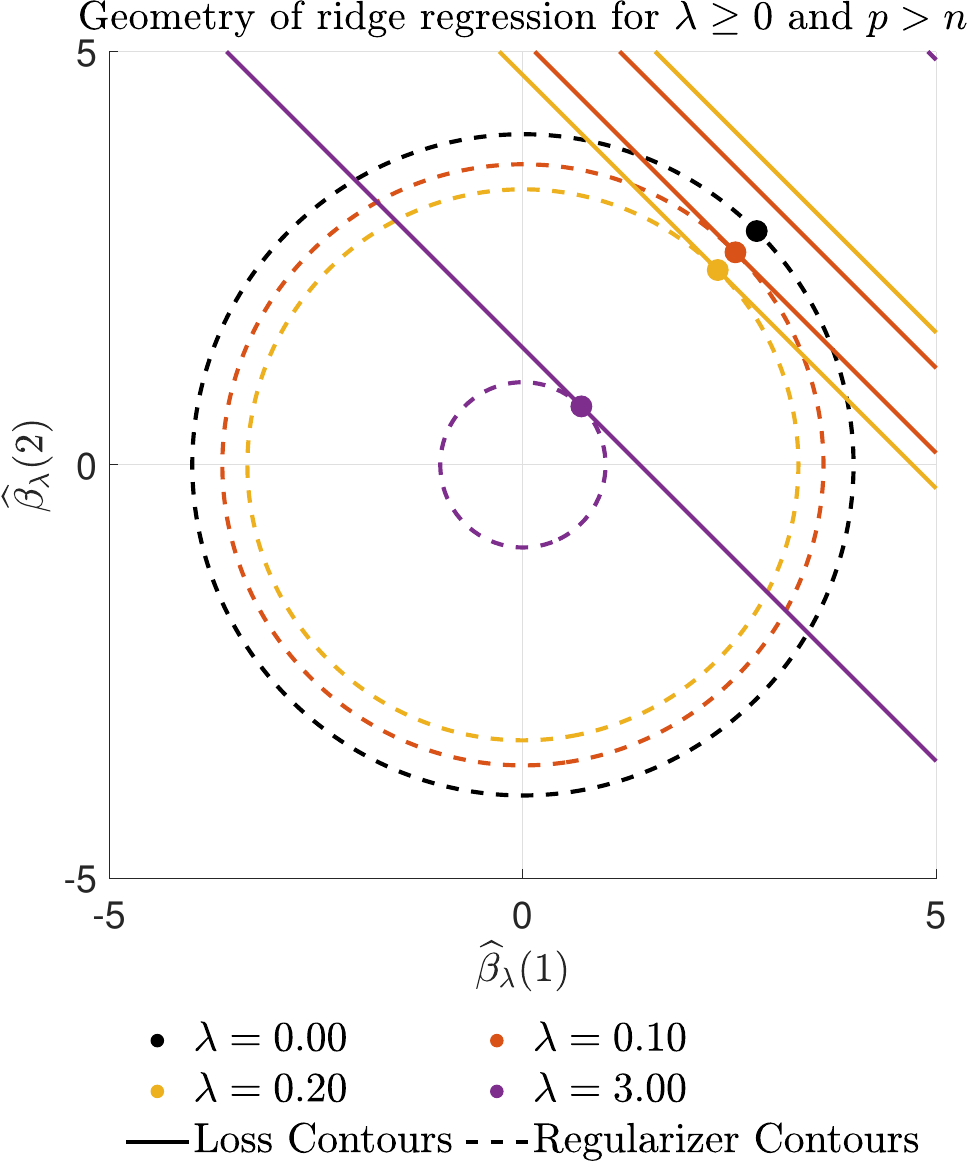}
    \includegraphics[width=0.49\textwidth]{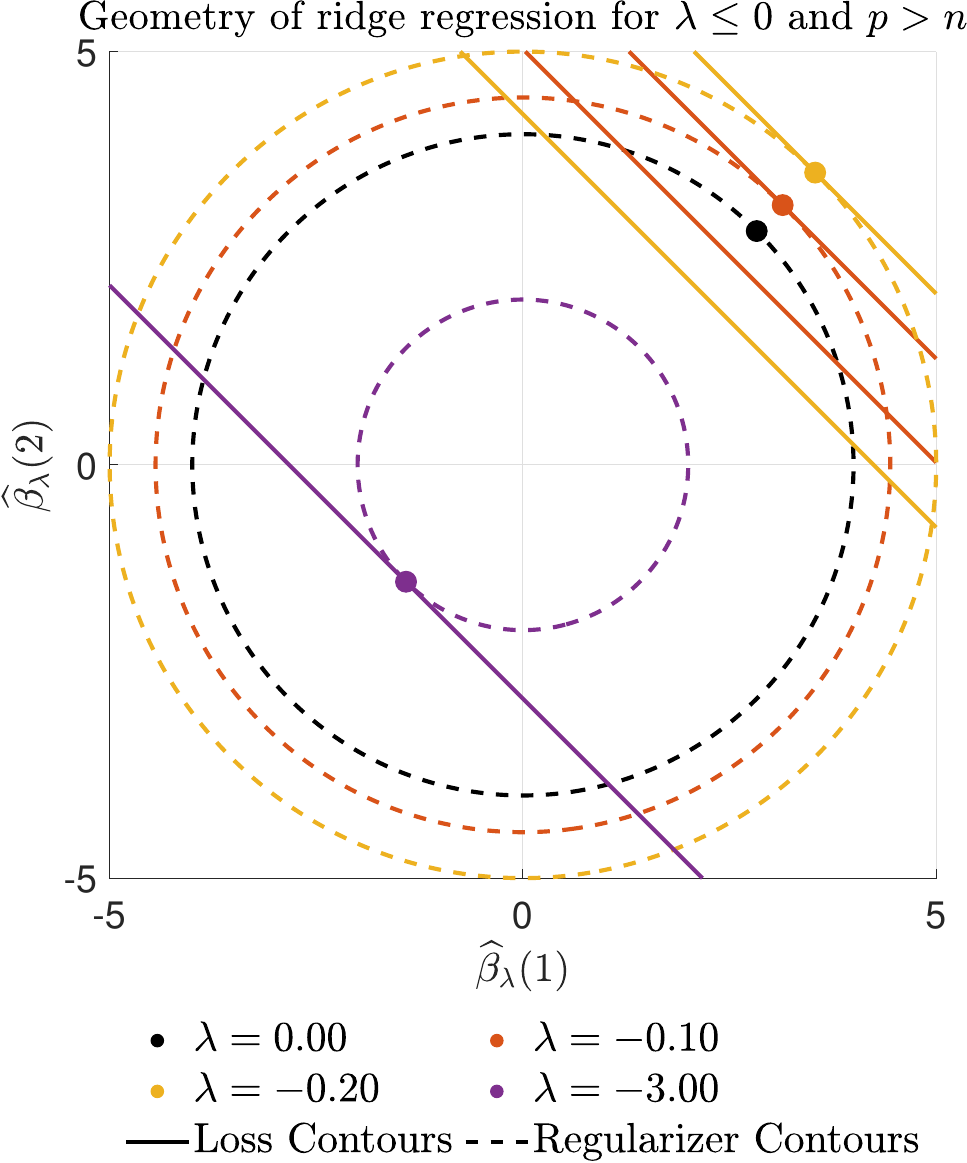}
    \caption{
        \textbf{Comparing the geometry of ridge solutions for positive versus negative regularization levels $\lambda$ in the overparameterized regime when $p > n$.} 
        We consider a two-dimensional overparameterized problem with the design matrix $X$ having the smallest \emph{non-zero} eigenvalue of $X^\top X / n$ equal to $s_{\min}^{+} = 1$. 
        Similarly to \Cref{fig:negative_ridge_visualize_underparam}, we plot contours of the $\ell_2$ squares loss $\cL$ and the $\ell_2$ regularizer $\cR$ (from problem \eqref{eq:ridge-minimization-problem-primal}) for both positive and negative values of $\lambda$. 
        As before, note that for positive values of $\lambda$, the loss contours touch the constraint contours from the \emph{outside}, while for negative values of $\lambda$, they touch from the \emph{inside}.
        To reiterate, it helps to think of ``tying'' $\infty$ to $-\infty$ on the real line to better understand the trend. 
        The values of $\lambda$ are plotted in the following order: $\framebox{{\color{black} 0.0}} \to {\color{myredorange} 0.1} \to {\color{myyelloworange} 0.2} \to {\color{mypurple} 3.0} \to {\color{mypurple} -3.0} \to {\color{myyelloworange} -0.2} \to {\color{myredorange} -0.1} \to \framebox{{\color{black} 0.0}}$.
    }
    \label{fig:negative_ridge_visualize_overparam}
\end{figure*}

\begin{figure}[!ht]
    \centering
    \begin{tikzpicture}[scale=0.99]

    \draw[thick] (-7,0) -- (7,0) node[anchor=north west] {$\lambda$};
    
    \filldraw[black] (0,0) circle (2pt) node[anchor=south] {\framebox{$0.0$}};
    
    \draw[myblue, thick, ->, opacity=0.375] (7,1) to[out=135,in=45] (-7,1);
    \node at (7,0) [anchor=south east, myblue, opacity=0.625] {$+\infty$};
    \node at (-7,0) [anchor=south west, myblue, opacity=0.625] {$-\infty$};
    
    \draw (-3.75,0.1) -- (-3.75,-0.1) node[anchor=north] {$-s_{\max}$};
    \draw (-2.5,0.1) -- (-2.5,-0.1) node[anchor=north] {$-s_{\min}^{+}$};
    
    \filldraw[mypurple] (-5,0) circle (2pt) node[anchor=south] {$-3.0$};
    \filldraw[myyelloworange] (-2,0) circle (2pt) node[anchor=south] {$-0.2$};
    \filldraw[myredorange] (-1,0) circle (2pt) node[anchor=south] {$-0.1$};
    \filldraw[myredorange] (1,0) circle (2pt) node[anchor=south] {$0.1$};
    \filldraw[myyelloworange] (2,0) circle (2pt) node[anchor=south] {$0.2$};
    \filldraw[mypurple] (5,0) circle (2pt) node[anchor=south] {$3.0$};
    
    \draw[myblue, thick, ->, opacity=0.375] (-2.5,1) -- (7,1);
    \draw[myblue, thick, ->, opacity=0.375] (-7,1) -- (-3.75,1);

    \draw[dotted, thick, myblue, opacity=0.625] (-3.75,0) -- (-3.75,1);
    \draw[dotted, thick, myblue, opacity=0.625] (-2.5,0) -- (-2.5,1);

    \end{tikzpicture}
    \caption{Illustration of the sequence of ridge regularization levels $\lambda$ used in \Cref{fig:negative_ridge_visualize_overparam} and the corresponding real projective line.
    (Note: Here $s_{\min}^{+}$ is the smallest \emph{non-zero} eigenvalue of $X^\top X / n$.)}
    \label{fig:negative_ridge_visualize_overparam_sequence}
\end{figure}
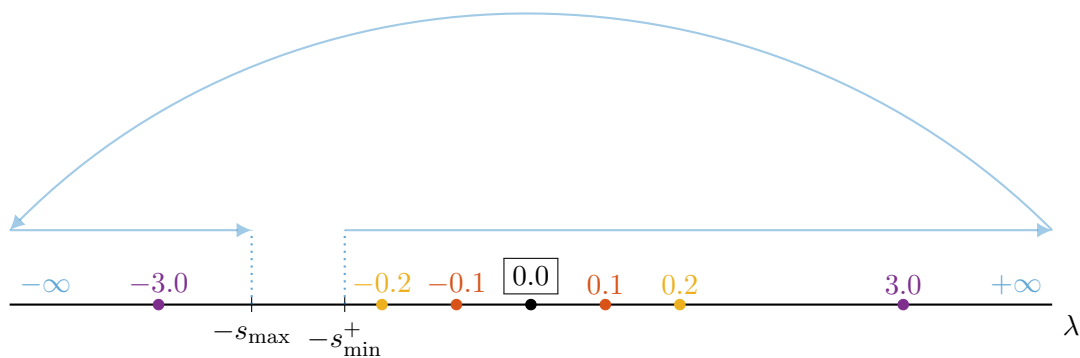

\restoregeometry

\clearpage
\section{Proofs in \Cref{sec:preliminaries}}
\label{sec:preliminaries_appendix}

\subsection{Proof of \Cref{prop:ood-risk-asymptotics}}
\label{sec:prop:ood-risk-asymptotics-proof}

\bigskip

\PropOodRiskAsymptotics*

\begin{proof}
The main workhorse of the proof is \Cref{thm:monotonicty-gen}.
Recall that
\begin{align}
    R(\hat{\bbeta}^{\lambda}) &= (\hbeta^\lambda - \bbeta_0)^\top \bSigma_0 (\hbeta^\lambda - {\bbeta}_0)
    + \sigma_0^2 \notag\\
    &= (\hbeta^\lambda - \bbeta)^\top \bSigma_0 (\hbeta^\lambda - {\bbeta}) +
    2(\hbeta^\lambda - \bbeta)^\top \bSigma_0 (\bbeta - {\bbeta}_0) + 
    \{ (\bbeta_0 - \bbeta)^\top \bSigma_0 (\bbeta_0 - {\bbeta}) 
    + \sigma_0^2 \}. \label{eq:R-decom}
\end{align}
Note that the last quadratic term of \eqref{eq:R-decom} is just a constant.
From \Cref{thm:monotonicty-gen}, we know that the first term of \eqref{eq:R-decom} has the following deterministic equivalent:
\begin{align}
    (\hbeta^\lambda - \bbeta)^\top \bSigma_0 (\hbeta^\lambda - {\bbeta}) &\asympequi \sQ_p(\lambda, \phi,\phi), \label{eq:R-decom-term-1}
\end{align}
where
\begin{align*}
    \sQ_p(\lambda, \phi,\psi) = \tc_p(\lambda, \phi,\psi,\bSigma_0)  + \|\fNL\|_{L_2}^2\tv_p(\lambda, \phi,\psi,\bSigma_0),  
\end{align*}
and the non-negative constants $\tc_p(\lambda, \phi,\psi,\bSigma_0)$ and $\tv_p(\lambda, \phi,\psi,\bSigma_0)$ are defined through the following equations:
\begin{align}
    \frac{1}{v_p(\lambda, \psi)} &= \lambda+\psi \int\frac{r}{1+v_p(\lambda, \psi)r }\rd H_p(r), \label{eq:v}\\
    \tv_p(\lambda, \phi,\psi,\bSigma_0) &= \ddfrac{\phi \otr[\bSigma_0\bSigma(v_p(\lambda, \psi)\bSigma+\bI)^{-2}]}{v_p(\lambda, \psi)^{-2}-\phi \int\frac{r^2}{(1+v_p(\lambda, \psi)r)^2}\rd H_p(r)}, \label{eq:tv_Sigma0}\\
    \tc_p(\lambda, \phi,\psi,\bSigma_0) &=
    \bbeta^{\top}(v_p(\lambda, \psi)\bSigma+\bI)^{-1}(\tv_p(\lambda, \phi,\psi,\bSigma_0)\bSigma+\bSigma_0) (v_p(\lambda, \psi)\bSigma+\bI)^{-1}\bbeta . \notag
\end{align}

For the second term of \eqref{eq:R-decom}, we also have
\begin{align*}
    (\hbeta^\lambda - \bbeta)^\top \bSigma_0 (\bbeta - {\bbeta}_0) &= \bfNL^\top \frac{\bX}{n}(\hSigma + \lambda\bI)^{-1}  \bSigma_0 (\bbeta - {\bbeta}_0) - 
    \bbeta ^\top \lambda(\hSigma + \lambda\bI)^{-1} \bSigma_0 (\bbeta - {\bbeta}_0),
\end{align*}
where $\bfNL$ is defined in the proof of \Cref{thm:monotonicty-gen}.
The first term vanishes from \citet[Lemma D.2]{patil2023generalized} because $\bfNL$ and $\bX$ are uncorrelated, and the remaining part has operator norm tending to zero.
It follows that
\begin{align}
    (\hbeta^\lambda - \bbeta)^\top \bSigma_0 (\bbeta - {\bbeta}_0) \asympequi - \bbeta^\top (v_p(\lambda, \phi)\bSigma + \bI)^{-1} \bSigma_0 (\bbeta - {\bbeta}_0). \label{eq:R-decom-term-2}
\end{align}

Combining \eqref{eq:R-decom}, \eqref{eq:R-decom-term-1}, and \eqref{eq:R-decom-term-2} yields
\begin{align*}
    R(\hat{\bbeta}^{\lambda}) 
    &\asympequi \sR_p(\lambda, \phi,\phi) \\
    &:=\sQ_p(\lambda, \phi,\phi)  - \bbeta^\top (v_p(\lambda, \phi)\bSigma + \bI)^{-1} \bSigma_0 (\bbeta - {\bbeta}_0) + \{ (\bbeta_0 - \bbeta)^\top \bSigma_0 (\bbeta_0 - {\bbeta}) + \sigma_0^2 \},
\end{align*}
which completes the proof.
\end{proof}

\section{Proofs in \Cref{sec:optimal_regularization_signs}}
\label{sec:proofs-sec:optimal_regularization_signs}

Before presenting the proof of our main results, we introduce some notation to facilitate the upcoming presentation.
Define
\begin{align}
    q_b(\bSigma_0, \bB) &= \mu^2 \tr[(\bSigma + \mu \bI)^{-1} \bSigma_0 (\bSigma + \mu \bI)^{-1} \bB] / p  \label{eq:def_q_b} \\
    q_v(\bSigma_0, \bB) &= \phi \tr[(\bSigma + \mu \bI)^{-1} \bSigma_0 (\bSigma + \mu \bI)^{-1} \bB] / p \label{eq:def_q_v}\\
    l(\bSigma_0,\bB) &= \tr[(\bSigma + \mu \bI)^{-1} \bSigma_0 \bB] / p.
\end{align}
We denote the derivative with respect to $\mu$ by:
\begin{align*}
    q_b'(\bSigma_0, \bB) &:= \frac{\partial q_b(\bSigma_0, \bB)}{\partial \mu}, \\
    q_v'(\bSigma_0, \bB) &:= \frac{\partial q_v(\bSigma_0, \bB)}{\partial \mu}.
\end{align*}
We also let $\bB = \bbeta\bbeta^{\top}$ and $\bB_0 = \bbeta_0\bbeta^{\top}$.%

\subsection{Proof of \Cref{thm:stationary-point-no-shift}}
\label{sec:thm:stationary-point-no-shift-proof}

\bigskip

\ThmStationaryPointNoShift*

\begin{proof}
    When $(\bSigma_0,\bbeta_0) = (\bSigma,\bbeta)$, the excess bias term $\sE(\lambda,\phi)=0$.
    Therefore, only the bias and variance terms contribute to the risk.
    We next split the proof into two cases.
    
    \paragraph{Part (1) Underparameterized regime.}   
    When $\phi<1$, from \Cref{lem:fixed-point-v-lambda-properties}, $\mu_p(0,\phi) = 0$, and thus, we have that
    \begin{align}
        0=\sB(0,\phi)\leq \min_{\lambda\in[ \lambda_{\min}(\phi),0]} \sB(\lambda,\phi).\label{eq:stationary-eq-2}
    \end{align}

     On the other hand, when $\phi< 1$, because $\sV'(\lambda,\phi)< 0$, the variance is strictly decreasing over $\lambda \in (\lambda_{\min}(\phi),0)$.
    We have
    \begin{align}
        \sV(0,\phi)< \min_{\lambda\in( \lambda_{\min}(\phi),0)} \sV(\lambda,\phi)\label{eq:stationary-eq-3}
    \end{align}
    It follows that
    \[\min_{\lambda\in (\lambda_{\min}(\phi),0]}\sR(\lambda,\phi) \geq \sR(0,\phi)  \]
    and 
    \[\min_{\lambda\geq\lambda_{\min}}\sR(\lambda,\phi) = \min_{\lambda\geq0}\sR(\lambda,\phi) .\]
    Equivalently, we have $\lambda^* \geq 0$.

    \paragraph{Part (2) Overparameterized regime.}
    When $\phi>1$, we begin by deriving the formula of the derivative of the bias term.
    When $(\bSigma_0,\bbeta_0) = (\bSigma,\bbeta)$, the bias term reduces to
    \begin{align*}
        \sB(\lambda,\phi)   &= \frac{q_v(\bSigma, \bSigma)}{1 - q_v(\bSigma, \bSigma)} q_b(\bSigma, \bB) + q_b(\bSigma, \bB) = \frac{q_b(\bSigma, \bB)}{1 - q_v(\bSigma, \bSigma)} .
    \end{align*}
    From \Cref{lem:ood-risk-deri}, its derivative satisfies that
    \begin{align}
        (1 - q_v(\bSigma, \bSigma))^2\sB'(\lambda, \phi)&= (1 - q_v(\bSigma, \bSigma))^2 h_2'(\mu)\notag\\
        &= q_b'(\bSigma, \bB) (1-q_v(\bSigma, \bSigma)) + q_b(\bSigma, \bB) q_v'(\bSigma, \bSigma) \label{eq:bias-deri-ind}\\
        &= \frac{\lambda}{\mu}q_b'(\bSigma, \bB) + \mu q_b'(\bSigma, \bB)q_v(\bSigma, \bI)  + q_b(\bSigma, \bB) q_v'(\bSigma, \bSigma). \notag
    \end{align}
    where the last equality is from the identity
    \begin{align}
        1 - q_v(\bSigma, \bSigma) & = \frac{\lambda}{\mu} + \mu q_v(\bSigma, \bI) \label{eq:identity-qv}
    \end{align}
    The in-distribution excess term vanishes, i.e., $\cE(\lambda,\phi) = 0$.
    Therefore, the derivative of the risk with respect to $\mu$ satisfies that
    \begin{align}
        &(1 - q_v(\bSigma, \bSigma))^2\sB'(\lambda,\phi) \\
        &= \frac{\lambda}{\mu}q_b'(\bSigma, \bB) + \mu q_b'(\bSigma, \bB)q_v(\bSigma, \bI)  + q_b(\bSigma, \bB) q_v'(\bSigma, \bSigma)\notag\\
        &= \frac{\lambda}{\mu}q_b'(\bSigma, \bB) + \mu q_b'(\bSigma, \bB)q_v(\bSigma, \bI)  + q_b(\bSigma, \bB) q_v'(\bSigma, \bSigma) \notag\\
        &= \frac{\lambda}{\mu}\big[
        \mu\otr[\bSigma_{\bbeta} (\bSigma + \mu \bI)^{-2}] - \mu^2\otr[\bSigma_{\bbeta} (\bSigma + \mu \bI)^{-3}]\big] \notag\\
        &\quad + 2\big[
        \mu\otr[\bSigma_{\bbeta} (\bSigma + \mu \bI)^{-2}] - \mu^2\otr[\bSigma_{\bbeta} (\bSigma + \mu \bI)^{-3}]\big]
        \cdot
        \mu \phi \otr[\bSigma (\bSigma + \mu \bI)^{-2}] \notag\\
        &\quad  - 2\mu\otr[\bSigma_{\bbeta} (\bSigma + \mu \bI)^{-2}]  \cdot \mu \phi  \otr[\bSigma^2 (\bSigma + \mu \bI)^{-3}] \notag\\
        &= 2\lambda \big[
        \otr[\bSigma_{\bbeta} (\bSigma + \mu \bI)^{-3} (\bSigma + \mu \bI)] - \mu\otr[\bSigma_{\bbeta} (\bSigma + \mu \bI)^{-3}] \big] \notag\\
        &\quad + 2 \big[
        \mu\otr[\bSigma_{\bbeta} (\bSigma + \mu \bI)^{-2}] - \mu^2\otr[\bSigma_{\bbeta} (\bSigma + \mu \bI)^{-3}] \big]
        \cdot
        \mu \phi \otr[\bSigma (\bSigma + \mu \bI)^{-2}] \notag\\
        &\quad  - 2\mu\otr[\bSigma_{\bbeta} (\bSigma + \mu \bI)^{-2}]  \cdot \mu \phi  \otr[\bSigma^2 (\bSigma + \mu \bI)^{-3}] \notag\\
        &= 2\lambda \otr[\bSigma_{\bbeta} \bSigma (\bSigma + \mu \bI)^{-3}] \notag\\
        &\quad +2\mu^2 \phi\otr[\bSigma_{\bbeta} (\bSigma + \mu \bI)^{-2}]
        \cdot
        ( \otr[\bSigma (\bSigma + \mu \bI)^{-2}] 
        -  \otr[\bSigma (\bSigma + \mu \bI)^{-3}]) \notag\\
        &\quad
        - 2\mu^3 \phi \otr[\bSigma_{\bbeta} (\bSigma + \mu \bI)^{-3}]
        \cdot
         \otr[\bSigma^2 (\bSigma + \mu \bI)^{-2}] \notag\\
         &= 2\lambda \otr[\bSigma_{\bbeta} \bSigma (\bSigma + \mu \bI)^{-3}] \notag\\
        &\quad +2\mu^3 \phi\otr[\bSigma_{\bbeta} (\bSigma + \mu \bI)^{-2}]
        \cdot \otr[\bSigma (\bSigma + \mu \bI)^{-3} ] \notag\\
        &\quad
        - 2\mu^3 \phi \otr[\bSigma_{\bbeta} (\bSigma + \mu \bI)^{-3}] 
        \cdot
         \otr[\bSigma (\bSigma + \mu \bI)^{-2}].\label{eq:stationary-eq-0}
    \end{align}
    When $\lambda=0$, it follows that
    \begin{align*}
        &\sB'(\lambda,\phi) \\
        &= \frac{2\mu^3 \phi }{(1 - q_v(\bSigma, \bSigma))^2} \big(\otr[\bSigma_{\bbeta} (\bSigma + \mu \bI)^{-2}]
        \cdot \otr[\bSigma (\bSigma + \mu \bI)^{-3} ] - \otr[\bSigma_{\bbeta} (\bSigma + \mu \bI)^{-3}] 
        \cdot
         \otr[\bSigma (\bSigma + \mu \bI)^{-2}]\big) ,
    \end{align*}
    and 
    \begin{align}
        &\frac{\partial \sB(\lambda,\phi)}{\partial\lambda} \notag \\
        &= \frac{2\mu^3 \phi }{(1 - q_v(\bSigma, \bSigma))^2} \big(\otr[\bSigma_{\bbeta} (\bSigma + \mu \bI)^{-2}]
        \cdot \otr[\bSigma (\bSigma + \mu \bI)^{-3} ] - \otr[\bSigma_{\bbeta} (\bSigma + \mu \bI)^{-3}] 
        \cdot
         \otr[\bSigma (\bSigma + \mu \bI)^{-2}] \big) \notag\\
         &\quad \cdot \frac{\partial \mu(\lambda,\phi)}{\partial \lambda}, \label{eq:stationary-eq-deri}
    \end{align}
    where $\partial \mu(\lambda,\phi)/\partial \lambda >0$ from \Cref{lem:fixed-point-v-lambda-properties}.
    
    From \Cref{lem:fixed-point-v-lambda-properties} and \eqref{eq:stationary-eq-deri}, since $\partial \mu(\lambda,\phi)/\partial \lambda >0$, we further have that
    \begin{align*}
        \frac{\partial \sB(\lambda,\phi)}{\partial\lambda} &\propto \otr[\bSigma_{\bbeta} (\bSigma + \mu \bI)^{-2}]
        \cdot \otr[\bSigma (\bSigma + \mu \bI)^{-3} ] - \otr[\bSigma_{\bbeta} (\bSigma + \mu \bI)^{-3}] 
        \cdot
         \otr[\bSigma (\bSigma + \mu \bI)^{-2}],
    \end{align*}
    which finishes the proof of the first conclusion.
    
    To obtain the sign of the optimal ridge penalty, note that
    \begin{align*}
        &(1 - q_v(\bSigma, \bSigma))^2\sB'(\lambda,\phi)\\
        &= 2\lambda \otr[\bSigma_{\bbeta} \bSigma (\bSigma + \mu \bI)^{-3}]\\
        &\qquad + 2\mu^3 \phi(\otr[\bSigma_{\bbeta} (\bSigma + \mu \bI)^{-2}]
        \cdot \otr[\bSigma (\bSigma + \mu \bI)^{-3} ]-\otr[\bSigma_{\bbeta} (\bSigma + \mu \bI)^{-3}] 
        \cdot
         \otr[\bSigma (\bSigma + \mu \bI)^{-2}]) \\
         &=T_1 + T_2.
    \end{align*}
    When $\phi>1$, because $\mu\geq 0 $, we have that $T_1<0$ when $\lambda<0$ and $T_1>0$ when $\lambda>0$.
    Also, under the assumption that $\otr[\bSigma_{\bbeta} (\bSigma + \mu \bI)^{-2}] \cdot \otr[\bSigma (\bSigma + \mu \bI)^{-3} ]-\otr[\bSigma_{\bbeta} (\bSigma + \mu \bI)^{-3}] \cdot \otr[\bSigma (\bSigma + \mu \bI)^{-2}]> 0$, we have that $T_2>0$ for all $\lambda> \lambda_{\min}(\phi)$, with equality holds only when $\lambda=0$. 
    Thus, $\sB(\lambda,\phi)$ is minimized at $\lambda<0$.

    For the variance term, from \Cref{lem:ood-risk-deri}, we have
    \begin{align}
        \sV'(\lambda,\phi) &= \frac{ \sigma^2 q_v'(\bSigma, \bSigma)}{(1 - q_v(\bSigma, \bSigma))^2} \notag\\
        &= -2\sigma^2\frac{\phi \otr[\bSigma^2 (\bSigma + \mu \bI)^{-3}]}{(1 - q_v(\bSigma, \bSigma))^2} \notag\\
        &= 2\phi\sigma^2\frac{ \mu \otr[\bSigma (\bSigma + \mu \bI)^{-3}]-\otr[\bSigma (\bSigma + \mu \bI)^{-2}]   }{(1 - q_v(\bSigma, \bSigma))^2} \notag\\
        &:= \frac{ T_3 + T_4  }{(1 - q_v(\bSigma, \bSigma))^2} \notag\\
        &= \frac{- 2\sigma^2 \phi \otr[\bSigma^2 (\bSigma + \mu \bI)^{-3}] }{(1 - q_v(\bSigma, \bSigma))^2}. \label{eq:var-ind}
    \end{align}
    Note that $- 2\sigma^2 \phi \otr[\bSigma^2 (\bSigma + \mu \bI)^{-3}] < 0$ and strictly increasing over $\lambda\geq 0$.
    Thus,
    \begin{align*}
        &(1 - q_v(\bSigma, \bSigma))^2\sR'(\lambda,\phi) \\
        &= (1 - q_v(\bSigma, \bSigma))^2[\sB'(\lambda,\phi) + \sV'(\lambda,\phi)]\\
        &= 2\lambda \otr[\bSigma_{\bbeta} \bSigma (\bSigma + \mu \bI)^{-3}]\\
        &\qquad + 2\mu^3 \phi\big(\otr[\bSigma_{\bbeta} (\bSigma + \mu \bI)^{-2}]
        \cdot \otr[\bSigma (\bSigma + \mu \bI)^{-3} ]-\otr[\bSigma_{\bbeta} (\bSigma + \mu \bI)^{-3}] 
        \cdot
         \otr[\bSigma (\bSigma + \mu \bI)^{-2}]\big)\\
         &\qquad + 2\phi\sigma^2 \big( \mu \otr[\bSigma (\bSigma + \mu \bI)^{-3}]-\otr[\bSigma (\bSigma + \mu \bI)^{-2}] \big) \\
         &= 2\lambda \otr[\bSigma_{\bbeta} \bSigma (\bSigma + \mu \bI)^{-3}] \\
         &\qquad + 2\phi\big(\mu^3 \otr[\bSigma_{\bbeta} (\bSigma + \mu \bI)^{-2}]
         + \mu\sigma^2\big) \cdot \otr[\bSigma (\bSigma + \mu \bI)^{-3} ] -2\phi\big(\mu^3\otr[\bSigma_{\bbeta} (\bSigma + \mu \bI)^{-3}] 
         + \sigma^2\big) \cdot
          \otr[\bSigma (\bSigma + \mu \bI)^{-2}] .
    \end{align*}
    Under the condition that 
    \begin{align*}
        \frac{\otr[\bSigma \{ \mu^3 (\bSigma + \mu \bI)^{-3} \} ]}{\otr[\bSigma \{ \mu^2 (\bSigma + \mu \bI)^{-2} \}]} > \frac{\otr[\bSigma_{\bbeta} \{ \mu^3 (\bSigma + \mu \bI)^{-3} \}] 
        + \sigma^2}{\otr[\bSigma_{\bbeta} \{ \mu^{2} (\bSigma + \mu \bI)^{-2} \}]
        + \sigma^2},
    \end{align*}
    it follows that for all $\lambda\geq 0$,
    \begin{align*}
        (1 - q_v(\bSigma, \bSigma))^2\sR'(\lambda,\phi) > 0.
    \end{align*}
    This implies that $\sR(\lambda,\phi)$ is minimized at $\lambda<0$, which finishes the proof.
\end{proof}  

\subsection{General Alignment Condition of \Cref{thm:stationary-point-no-shift} under Special Cases}
\label{sec:alignment-special-cases}

\begin{remark}[\Cref{thm:stationary-point-no-shift} under isotropic features or signals]\label{rmk:thm:stationary-point-no-shift-iso}
    When $\phi>1$ and $\bSigma=\bI$, the condition above is never satisfied because
    \[
        \frac{\otr[\bSigma_{\bbeta} \{ \mu^3 (\bSigma + \mu \bI)^{-3} \}] 
        + \sigma^2}{\otr[\bSigma_{\bbeta} \{ \mu^{2} (\bSigma + \mu \bI)^{-2} \}]
        + \sigma^2}
        =
        \frac{\left(\frac{\mu}{1+\mu}\right)^3\otr[\bB] 
        + \sigma^2}{\left(\frac{\mu}{1+\mu}\right)^2\otr[\bB ]
        + \sigma^2}
        > 
        \frac{\left(\frac{\mu}{1+\mu}\right)^3}{\left(\frac{\mu}{1+\mu}\right)^2}
        = 
        \frac{\otr[\bSigma \{ \mu^3 (\bSigma + \mu \bI)^{-3} \}]}{\otr[\bSigma \{ \mu^{2} (\bSigma + \mu \bI)^{-2} \}]}.
    \]
    
    Similarly, when $\bbeta$ is isotropic, the condition above is never satisfied because
    \[
        \frac{\otr[\bSigma_{\bbeta} \{ \mu^3 (\bSigma + \mu \bI)^{-3} \}] 
        + \sigma^2}{\otr[\bSigma_{\bbeta} \{ \mu^{2} (\bSigma + \mu \bI)^{-2} \}]
        + \sigma^2}
        =
        \frac{\otr[\bSigma \{ \mu^3 (\bSigma + \mu \bI)^{-3} \}] 
        + \sigma^2}{\otr[\bSigma \{ \mu^{2} (\bSigma + \mu \bI)^{-2} \}]
        + \sigma^2}
        >
        \frac{\otr[\bSigma \{ \mu^3 (\bSigma + \mu \bI)^{-3} \}]}{\otr[\bSigma \{ \mu^{2} (\bSigma + \mu \bI)^{-2} \}]}
    \]
    since $\otr[\bSigma \{ \mu^3 (\bSigma + \mu \bI)^{-3} \}] < \otr[\bSigma \{ \mu^{2} (\bSigma + \mu \bI)^{-2} \}]$.
    To see this, note that $\mu>0$ when $\phi>1$, from \Cref{lem:fixed-point-v-lambda-properties}.
    Thus,
    \begin{align*}
        \otr[\bSigma \{ \mu^3 (\bSigma + \mu \bI)^{-3} \}]
        &= \otr[\bSigma \{ \mu^2 (\bSigma + \mu \bI)^{-2} \} \{ \mu (\bSigma + \mu \bI)^{-1} \}] \\
        &\le \otr[\bSigma \{ \mu^2 (\bSigma + \mu \bI)^{-2} \}]
        \cdot
        \| \mu (\bSigma + \mu \bI)^{-1} \|_{\mathrm{op}} \\
        & \le \otr[\bSigma \{ \mu^2 (\bSigma + \mu \bI)^{-2} \}].
    \end{align*}
\end{remark}

\begin{proposition}[\Cref{thm:stationary-point-no-shift} under strict alignment conditions]\label{prop:thm:stationary-point-no-shift-strict}
    Assuming random signals, zero noise under the strict alignment conditions of \citet{wu_xu_2020}, the general alignment condition \eqref{eq:align-cond} is satisfied.
\end{proposition}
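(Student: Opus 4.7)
The plan is to reduce the general alignment condition \eqref{eq:align-cond} in the noiseless setting to a Chebyshev-type covariance inequality on a suitable probability measure over the spectrum of $\bSigma$, after which Wu--Xu's strict alignment simply makes the relevant sequences comonotone.

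First, I would diagonalize. Writing $\bSigma = \sum_{i=1}^{p} r_i \bw_i \bw_i^{\top}$ and setting $b_i = \EE[\langle \bbeta, \bw_i\rangle^2]$, one has $\otr[\bB\, g(\bSigma)] = p^{-1}\sum_{i} b_i g(r_i)$ for any scalar function $g$ applied spectrally. In the noiseless case $\sigma^2 = 0$, condition \eqref{eq:align-cond} reads
\[
\frac{\sum_i b_i r_i/(vr_i+1)^{2}}{\sum_i b_i r_i/(vr_i+1)^{3}} > \frac{\sum_i r_i/(vr_i+1)^{2}}{\sum_i r_i/(vr_i+1)^{3}}.
\]
Setting $w_i := r_i/(vr_i+1)^3 > 0$ and $s_i := vr_i+1 > 0$, so that $r_i/(vr_i+1)^2 = s_i w_i$, and cross-multiplying, this is equivalent to
\[
\Big(\sum_i b_i s_i w_i\Big)\Big(\sum_j w_j\Big) > \Big(\sum_i b_i w_i\Big)\Big(\sum_j s_j w_j\Big),
\]
i.e., $\mathrm{Cov}_\pi(b, s) > 0$ for the probability measure $\pi_i := w_i/\sum_j w_j$ on $[p]$.

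Now, $s_i = vr_i+1$ is a strictly monotone (hence rank-preserving) function of $r_i$ whenever $v \ne 0$, while the Wu--Xu strict alignment hypothesis asserts precisely that $(b_i)$ and $(r_i)$ are comonotone; hence $(b_i)$ and $(s_i)$ are comonotone as well. The weighted Chebyshev sum inequality, e.g.\ \citet{fink1984chebyshev}, then yields $\mathrm{Cov}_\pi(b, s) \ge 0$, with strict inequality as soon as neither $b$ nor $s$ is $\pi$-a.s.\ constant---which holds whenever $\bSigma$ is non-scalar and $\bbeta$ has a non-trivial alignment profile. The degenerate $v=0$ boundary gives equality but is a limiting case; strict inequality for $v \in (-1/r_{\max}, 0) \cup (0, 1/\mu(0,\phi))$ follows by the same argument, and the opposite sign under strict misalignment is immediate from the anti-comonotone version of Chebyshev's inequality.

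The main step requiring care is formalizing Wu--Xu's strict alignment at the level of the limiting joint empirical distribution of $(r_i, b_i)$ (rather than as finite-$p$ comonotonicity) and exporting this to the sign statement $\mathrm{Cov}_\pi(b, s) \geq 0$ uniformly in $v$; a monotone rearrangement or coupling argument bridges this gap. The remaining bookkeeping is routine: checking positivity of $s_i$ and $w_i$ over the admissible $v$-range so that $\pi$ is a bona fide probability measure, and reading off strict inequality from the equality case of Chebyshev.
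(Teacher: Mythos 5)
Your proof takes essentially the same route as the paper: write the noiseless alignment condition \eqref{eq:align-cond} in spectral form, cross-multiply, and recognize the result as a Chebyshev-type covariance inequality under a tilted probability measure on the eigenvalues, citing \citet{fink1984chebyshev}. You tilt by $r_i/(vr_i+1)^3$ and pair the two \emph{increasing} sequences $b$ and $s_i=vr_i+1$, while the paper tilts by $r/(r+\mu)^2$ and pairs the increasing $f$ with the decreasing $1/(r+\mu)$; these are two parameterizations of the same algebraic identity and lead to the identical displayed inequality.

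One inaccuracy worth flagging: you assert that "strict inequality for $v \in (-1/r_{\max}, 0)$ follows by the same argument," but for $v<0$ the sequence $s_i=vr_i+1$ is \emph{decreasing} in $r_i$, so under strict alignment $b$ and $s$ are anti-comonotone and Chebyshev gives $\mathrm{Cov}_\pi(b,s)\le 0$ — the opposite sign, not the same one. This does not affect your conclusion, since over the admissible range $\lambda>\lambda_{\min}(\phi)$ in the overparameterized regime one always has $\mu(\lambda,\phi)>0$, hence $v=1/\mu>0$; the condition in \eqref{eq:align-cond} only needs to be checked for $v\in(0,1/\mu(0,\phi)]$, which your comonotone-Chebyshev step correctly handles.
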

\begin{proof}

Let $h$ be a random variable following the empirical measure of the eigenvalues of $\bSigma=\bU\bLambda\bU$ (i.e., $H_p$) and $g$ be a random variable following the empirical measure $\diag(\bU\EE[\bbeta^{\top}\bbeta]\bU)$.
When the joint distribution of $(h,g)$ exists, there exists a function $f$ such that $g$ has mass $r\mapsto f(r) \rd H_p(r)$.
The strict alignment condition from \citet{wu_xu_2020} imposes $f$ to be either strictly increasing or decreasing.
This also implies that
\begin{align}
    \otr[\bSigma_{\bbeta}]
    = \otr[\bSigma \bB]
    \ge \otr[\bSigma] \otr[\bB]. \label{eq:strict-align-imp}
\end{align}
This holds because of the following Chebyshev's ``other'' inequality \citep{fink1984chebyshev}.\footnote{This is the second (less well-known) inequality, and not the first (more well-known) tail inequality that may come to the reader's mind!}
\begin{fact}[Positive/negative correlations of monotone functions; see, e.g., Appendix 9.9 of \citet{ross2022simulation}]
    Let $f$ and $g$ be two real-valued functions of the same monotonicity.
    Let $H$ be a probability measure on the real line.
    Then the following inequality holds:
    \begin{equation}
        \label{eq:corr_monotone_functions}
        \int f(r) g(r) \, \mathrm{d}H(r)
        \ge \int f(r) \, \mathrm{d}H(r)
        \cdot \int g(r) \, \mathrm{d}H(r).
    \end{equation}
    On the other hand, if $f$ is non-decreasing and $g$ is non-increasing, then the inequality in \eqref{eq:corr_monotone_functions} is reversed. 
\end{fact}

Now we will verify that this condition indeed implies \eqref{eq:align-cond} when $\sigma^2 = 0$.
Define $f(r) = \sum_{i=1}^p(\bbeta^{\top}\bw_i)^2\ind\{r=r_i\}$.
$H_p(r)=p^{-1}\sum_{i=1}^p\ind\{r=r_i\}$ and the transformed measure $\tilde{H}_p(r)=r(r+\mu)^{-2} p^{-1}\sum_{i=1}^p\ind\{r=r_i\} / \int r(r+\mu)^{-2} \rd H_p(r)$.

Because $f(r)$ is increasing and $1/({r+\mu})$ is decreasing in $r$, it follows that
\begin{align*}
    \int  f(r)\frac{r}{r+\mu}\rd\tilde{H}_p(r) &\leq \int  f(r)\rd\tilde{H}_p(r) \int  \frac{r}{r+\mu}\rd\tilde{H}_p(r).
\end{align*}
Transforming back to $H_p(r)$ yields that
\begin{align*}
    \int f(r)\frac{r}{(r+\mu)^3} \rd H_p(r) \cdot \int \frac{r}{(r+\mu)^2} \rd H_p(r) &\leq \int f(r)\frac{r}{(r+\mu)^2} \rd H_p(r) \cdot \int \frac{r}{(r+\mu)^3} \rd H_p(r).
\end{align*}
Equivalently,
\[
    \frac{\otr[\bSigma_{\bbeta} \{ \mu^{2} (\bSigma + \mu \bI)^{-2} \}]
    }{\otr[\bSigma_{\bbeta} \{ \mu^3 (\bSigma + \mu \bI)^{-3} \}] 
    }
    \geq
    \frac{\otr[\bSigma \{ \mu^{2} (\bSigma + \mu \bI)^{-2} \}]}{\otr[\bSigma \{ \mu^3 (\bSigma + \mu \bI)^{-3} \}]},
\]
with equality holds if and only if $f(r)$ is $H_p$-almost surely constant.
This finishes the proof.
\end{proof}

\subsection{Proof of \Cref{thm:stationary-point-cov-shift-iso}}
\label{sec:thm:stationary-point-cov-shift-iso-proof}

\bigskip

\ThmStationaryPointCovShiftIso*

\begin{proof}
    With slight abuse of notations, we use $\sR$, $\sB$, $\sV$, and $\sS$ to denote $\EE_{\bbeta}[\sR]$, $\EE_{\bbeta}[\sB]$, $\EE_{\bbeta}[\sV]$, and $\EE_{\bbeta}[\sS]$ as well.
    We split the proof into two parts.
    
    \paragraph{Part (1) Asymptotic risk.}
    When $\bbeta_0=\bbeta$, the extra bias term is zero, that is, $\sE(\lambda,\phi)=0$.
    So, only the bias and variance components contribute to the risk.

    We begin by deriving the formula for the derivative of the bias.
    For isotropic signals, from \Cref{lem:ood-risk-deri} and \eqref{eq:identity-qv}, we have
    \begin{align*}
        &(1 - q_v(\bSigma, \bSigma))^2\sB'(\lambda, \phi) \\
        &= \alpha^2 [q_v'(\bSigma_0, \bSigma) q_b(\bSigma, \bI) (1 - q_v(\bSigma, \bSigma))\\
        &\quad+ q_v(\bSigma_0, \bSigma) q_b'(\bSigma, \bI) (1-q_v(\bSigma, \bSigma)) + q_v(\bSigma_0, \bSigma) q_b(\bSigma, \bI) q_v'(\bSigma, \bSigma) \\
        &\quad + q_b'(\bSigma_0, \bI)(1 - q_v(\bSigma, \bSigma))^2 ] \\
        &= \alpha^2 \bigg[q_v'(\bSigma_0, \bSigma) q_b(\bSigma, \bI) \left(\frac{\lambda}{\mu} + \mu q_v(\bSigma, \bI) \right) \\
        &\quad+ q_v(\bSigma_0, \bSigma) q_b'(\bSigma, \bI) \left(\frac{\lambda}{\mu} + \mu q_v(\bSigma, \bI) \right) + q_v(\bSigma_0, \bSigma) q_b(\bSigma, \bI) q_v'(\bSigma, \bSigma) \\
        &\quad + q_b'(\bSigma_0, \bI)\left(\frac{\lambda}{\mu} + \mu q_v(\bSigma, \bI) \right)^2 \bigg] \\
        &= \frac{\alpha^2\lambda }{\mu}\left(q_v'(\bSigma_0, \bSigma) q_b(\bSigma, \bI) + q_v(\bSigma_0, \bSigma) q_b'(\bSigma, \bI) + \frac{\lambda}{\mu}q_b'(\bSigma_0, \bI) + 2\mu q_b'(\bSigma_0, \bI)q_v(\bSigma, \bI)\right)\\
        &\quad + \alpha^2 \Big[ \mu q_v'(\bSigma_0, \bSigma) q_b(\bSigma, \bI)   q_v(\bSigma, \bI) \\
        &\quad+ \mu q_v(\bSigma_0, \bSigma) q_b'(\bSigma, \bI)  q_v(\bSigma, \bI) + q_v(\bSigma_0, \bSigma) q_b(\bSigma, \bI) q_v'(\bSigma, \bSigma) \\
        &\quad + \mu^2 q_b'(\bSigma_0, \bI) q_v(\bSigma, \bI)^2 \Big].
    \end{align*}
    Notice that
    \begin{align*}
        q_b(\bSigma_0, \bI) &= \mu^2\otr[\bSigma_0(\bSigma+\mu\bI)^{-2}] \\
        &= \frac{\mu^2}{\phi} q_v(\bSigma_0,\bI)\\
        q_b'(\bSigma_0, \bI) &= 2\mu\otr[\bSigma_0(\bSigma+\mu\bI)^{-2}] - 2 \mu^2\otr[\bSigma_0(\bSigma+\mu\bI)^{-3}]\\
        &= 2\mu \otr[\bSigma_0\bSigma(\bSigma+\mu\bI)^{-3}] \\
        &= - \frac{\mu}{\phi}q_v'(\bSigma_0,\bSigma),
    \end{align*}
    thus, it follows that
    \begin{align*}
        q_v'(\bSigma_0, \bSigma) q_b(\bSigma, \bI) + \mu q_b'(\bSigma_0, \bI) q_v(\bSigma, \bI) &= q_v'(\bSigma_0,\bSigma)\left[q_b(\bSigma, \bI) - \frac{\mu^2}{\phi} q_v(\bSigma, \bI)\right] = 0\\
        \mu q_b'(\bSigma, \bI)  q_v(\bSigma, \bI) + q_b(\bSigma, \bI) q_v'(\bSigma, \bSigma) &= \frac{\mu^2}{\phi}  \left[-  q_v'(\bSigma, \bSigma) q_v(\bSigma, \bI) + q_v'(\bSigma, \bSigma) q_v(\bSigma, \bI)  \right] =0.
    \end{align*}
    Therefore, we have
    \begin{align*}
        &(1 - q_v(\bSigma, \bSigma))^2\sB'(\lambda, \phi) \\
        &= \frac{\alpha^2\lambda }{\mu}\left( q_v(\bSigma_0, \bSigma) q_b'(\bSigma, \bI) + \frac{\lambda}{\mu}q_b'(\bSigma_0, \bI) + \mu q_b'(\bSigma_0,\bI)q_v(\bSigma,\bI)\right) \notag\\
        &= - \frac{\alpha^2\lambda }{\phi}q_v(\bSigma_0, \bSigma) q_v'(\bSigma, \bSigma) - \frac{\alpha^2\lambda^2}{\mu\phi}q_v'(\bSigma_0, \bSigma) - \frac{\alpha^2\lambda\mu}{\phi}q_v'(\bSigma_0, \bSigma) q_v(\bSigma,\bI)\\
        &= - \frac{\alpha^2\lambda }{\phi}[q_v(\bSigma_0, \bSigma) q_v'(\bSigma, \bSigma) +\mu q_v'(\bSigma_0, \bSigma) q_v(\bSigma,\bI)] - \frac{\alpha^2\lambda^2}{\mu\phi}q_v'(\bSigma_0, \bSigma) .
    \end{align*}

    From \eqref{eq:identity-qv}, we further have that
    \begin{align*}
        \sB'(\lambda, \phi)  &= - \frac{\alpha^2\lambda}{\phi (1 - q_v(\bSigma, \bSigma))^2}[q_v(\bSigma_0, \bSigma) q_v'(\bSigma, \bSigma) - q_v'(\bSigma_0, \bSigma) q_v(\bSigma, \bSigma) + q_v'(\bSigma_0, \bSigma) ]\\
        &= - \frac{\alpha^2\lambda}{\phi} \frac{\partial}{\partial\mu} \frac{q_v(\bSigma_0, \bSigma) }{1-q_v(\bSigma, \bSigma)}.
    \end{align*}
    From \Cref{lem:ood-risk-deri}, we know that
    \[\frac{\partial}{\partial\mu} \frac{q_v(\bSigma_0, \bSigma) }{ 1-q_v(\bSigma, \bSigma)}<0\]
    for all $\lambda\in(-\lambda_{\min}(\phi),+\infty)$.
    Furthermore, we have
    \begin{align*}
        \sR'(\lambda, \phi) &= \sB'(\lambda, \phi)  + \sV'(\lambda, \phi) \\
        &=  - \frac{\alpha^2\lambda}{\phi} \frac{\partial}{\partial\mu} \frac{q_v(\bSigma_0, \bSigma) }{1-q_v(\bSigma, \bSigma)} + \sigma^2 \frac{\partial}{\partial\mu} \frac{q_v(\bSigma_0, \bSigma) }{1-q_v(\bSigma, \bSigma)}.
    \end{align*}
    Setting the above to zero gives $\lambda^* = \phi \sigma^2/\alpha^2$. 
    Also note that $\sR'(\lambda,\phi)$ is negative when $\lambda<\lambda^*$ and positive when $\lambda>\lambda^*$, as well as $\partial \mu/\partial\lambda>0$ from \Cref{lem:fixed-point-v-lambda-properties} for all $\lambda\geq \lambda_{\min}(\phi)$.
    Thus, $\lambda^*$ gives the optimal risk.

    \paragraph{Part (2) Finite-sample risk.}
    Recall the finite-sample risk is given by \eqref{eq:ridge_prederr}.
    Under \Cref{asm:train-test} and isotropic signals, it follows that
    \begin{align*}
        R(\hbeta^\lambda) &= \lambda^2 \alpha^2 \otr[(\hSigma + \lambda \bI)^{-2} \bSigma_0]
            + \sigma^2 \phi \otr[(\hSigma + \lambda \bI)^{-2} \hSigma \bSigma_0].
    \end{align*}    
    Taking the derivative with respect to $\lambda$ yields that
    \begin{align*}
        \frac{\partial R(\hbeta^\lambda)}{\partial\lambda} & = 2\lambda \alpha^2 \otr[(\hSigma + \lambda \bI)^{-2} \bSigma_0]
        -2\lambda^2 \alpha^2 \otr[(\hSigma + \lambda \bI)^{-3} \bSigma_0] -2 \sigma^2 \phi \otr[(\hSigma + \lambda \bI)^{-3} \hSigma \bSigma_0] \\
        &= 2\lambda \alpha^2 \otr[(\hSigma + \lambda \bI)^{-3} \hSigma \bSigma_0] - 2 \sigma^2 \phi\otr[(\hSigma + \lambda \bI)^{-3} \hSigma \bSigma_0]\\
        &= 2(\lambda \alpha^2  - \sigma^2\phi )\otr[(\hSigma + \lambda \bI)^{-2} \hSigma\bSigma_0].
    \end{align*}
    Setting the above to zero gives $\lambda_p^* = \phi_n/\SNR$.
\end{proof}

\subsection{Proof of \Cref{thm:stationary-point-cov-shift}}
\label{sec:thm:stationary-point-cov-shift-proof}

\bigskip

\ThmStationaryPointCovShift*

\begin{proof}

    When $\bbeta_0=\bbeta$, the extra bias term is zero, that is, $\sE(\lambda,\phi)=0$.
    So, only bias and variance contribute to the risk.
    We next split the proof into two cases.

    \paragraph{Part (1) Underparameterized regime.}
    When $\phi< 1$, from \Cref{lem:ridge-fixed-point-v-properties} we have $\mu(0,\phi) = 0$ and the bias defined in \Cref{prop:ood-risk-asymptotics} becomes zero, i.e., $\sB(0,\phi) = 0$.
    From the fixed-point equation \eqref{eq:basic-ridge-equivalence-mu-fixed-point}, we also have $\lim_{\lambda\rightarrow0^+}\lambda/\mu(\lambda,\phi)=0$.
    From \Cref{lem:ood-risk-deri} and \eqref{eq:identity-qv}, we have
    \begin{align*}
        &(1 - q_v(\bSigma, \bSigma))^2\sB'(\lambda, \phi) \\
        &=  q_v'(\bSigma_0, \bSigma) q_b(\bSigma, \bB) (1 - q_v(\bSigma, \bSigma))\\
        &\quad+ q_v(\bSigma_0, \bSigma) q_b'(\bSigma, \bB) (1-q_v(\bSigma, \bSigma)) + q_v(\bSigma_0, \bSigma) q_b(\bSigma, \bB) q_v'(\bSigma, \bSigma) \\
        &\quad + q_b'(\bSigma_0, \bB)(1 - q_v(\bSigma, \bSigma))^2  \\
        &= q_v'(\bSigma_0, \bSigma) q_b(\bSigma, \bB) \left(\frac{\lambda}{\mu} + \mu q_v(\bSigma, \bI) \right) \\
        &\quad+ q_v(\bSigma_0, \bSigma) q_b'(\bSigma, \bB) \left(\frac{\lambda}{\mu} + \mu q_v(\bSigma, \bI) \right) + q_v(\bSigma_0, \bSigma) q_b(\bSigma, \bB) q_v'(\bSigma, \bSigma) \\
        &\quad  + q_b'(\bSigma_0, \bB)\left(\frac{\lambda}{\mu} + \mu q_v(\bSigma, \bI) \right)^2  \\
        &= \frac{\lambda }{\mu}\left(q_v'(\bSigma_0, \bSigma) q_b(\bSigma, \bB) + q_v(\bSigma_0, \bSigma) q_b'(\bSigma, \bB) + \frac{\lambda}{\mu}q_b'(\bSigma_0, \bB) + 2\mu q_b'(\bSigma_0, \bB)q_v(\bSigma, \bI)\right)\\
        &\quad + \Big[\mu q_v'(\bSigma_0, \bSigma) q_b(\bSigma, \bB)   q_v(\bSigma, \bI) \\
        &\quad+ \mu q_v(\bSigma_0, \bSigma) q_b'(\bSigma, \bB)  q_v(\bSigma, \bI) + q_v(\bSigma_0, \bSigma) q_b(\bSigma, \bB) q_v'(\bSigma, \bSigma) \\
        &\quad + \mu^2 q_b'(\bSigma_0, \bB) q_v(\bSigma, \bI)^2\Big].
    \end{align*}
    Thus, the derivative of the bias term becomes zero, i.e., $\sB'(0,\phi) = 0$.
    Note that the variance term is strictly decreasing over $(\lambda_{\min}(\phi),+\infty)$ from \Cref{lem:ood-risk-deri}.
    Similarly to part (1) of the proof of \Cref{thm:stationary-point-no-shift}, it follows that $\lambda^*\geq 0$.

    \paragraph{Part (2) Overparameterized regime and $\bSigma=\bI$.}
    When $\bSigma=\bI$, the above derivative in Part (1) becomes
    \begin{align*}
        &(1 - q_v(\bI, \bI))^2\sB'(\lambda, \phi) \\
        &= \frac{\lambda }{\mu}\left(q_v'(\bSigma_0, \bI) q_b(\bI, \bB) + q_v(\bSigma_0, \bI) q_b'(\bI, \bB) + \frac{\lambda}{\mu}q_b'(\bSigma_0, \bB) + 2\mu q_b'(\bSigma_0, \bB)q_v(\bI, \bI)\right)\\
        &\quad + \Big[\mu q_v'(\bSigma_0, \bI) q_b(\bI, \bB)   q_v(\bI, \bI) \\
        &\quad+ \mu q_v(\bSigma_0, \bI) q_b'(\bI, \bB)  q_v(\bI, \bI) + q_v(\bSigma_0, \bI) q_b(\bI, \bB) q_v'(\bI, \bI) \\
        &\quad + \mu^2 q_b'(\bSigma_0, \bB) q_v(\bI, \bI)^2\Big].
    \end{align*}
    Note from \eqref{eq:def_q_b} and \eqref{eq:def_q_v}, we have that
    \begin{align*}        
        q_b(\bSigma_0, \bB) &= \frac{\mu^2}{(1+\mu)^{2}} \otr[\bSigma_0\bB] \\
        q_b'(\bSigma_0, \bB) &= \left( -\frac{2\mu^2}{(1+\mu)^3} + \frac{2\mu}{(1+\mu)^2} \right) \otr[\bSigma_0\bB] = \frac{2\mu}{(1+\mu)^3} \otr[\bSigma_0\bB] = \frac{2}{\mu(1+\mu)}q_b(\bSigma_0,\bB),\\
        q_v(\bI,\bI) &= \frac{\phi}{(1+\mu)^2}\\
        q_v'(\bI,\bI) &= -\frac{2\phi}{(1+\mu)^3} = - \frac{2}{1+\mu} q_v(\bI,\bI) \\
        q_v(\bSigma_0, \bI) &= \frac{\phi}{(1+\mu)^2} \otr[\bSigma_0] \\
        q_v'(\bSigma_0, \bI) &= - \frac{2\phi}{(1+\mu)^3} \otr[\bSigma_0] = -\frac{2}{1+\mu}q_v(\bSigma_0, \bI).
    \end{align*}
    We further have
    \begin{align*}
        &(1 - q_v(\bI, \bI))^2\sB'(\lambda, \phi) \\
        &=\frac{\lambda }{\mu}\left(- \frac{2}{1+\mu}q_v(\bSigma_0, \bI) q_b(\bI, \bB) + \frac{2}{\mu(1+\mu)} q_v(\bSigma_0, \bI) q_b(\bI, \bB) + \frac{2\lambda}{\mu^2(1+\mu)}q_b(\bSigma_0, \bB) + \frac{4}{1+\mu} q_b(\bSigma_0, \bB)q_v(\bI, \bI)\right)\\
        &\quad +  [-\frac{2\mu}{1+\mu}q_v(\bSigma_0, \bI) q_b(\bI, \bB)   q_v(\bI, \bI) \\
        &\quad  + \frac{2}{1+\mu} q_v(\bSigma_0, \bI) q_b(\bI, \bB)  q_v(\bI, \bI) - \frac{2}{1+\mu} q_v(\bSigma_0, \bI) q_b(\bI, \bB) q_v(\bI, \bI) \\
        &\quad + \frac{2\mu}{1+\mu} q_b(\bSigma_0, \bB) q_v(\bI, \bI)^2 ]\\
        &= \frac{\lambda }{\mu}\left( \frac{2\phi\mu(1-\mu)}{(1+\mu)^5} \otr[\bSigma_0]\otr[\bB]+ \frac{2\lambda}{(1+\mu)^3}\otr[\bSigma_0\bB] + \frac{4\phi\mu^2}{(1+\mu)^5}\otr[\bSigma_0\bB]\right)\\
        &\quad -\frac{2\mu}{1+\mu}q_v(\bSigma_0, \bI) q_b(\bI, \bB)   q_v(\bI, \bI) \\
        &\quad + \frac{2\mu}{1+\mu} q_b(\bSigma_0, \bB) q_v(\bI, \bI)^2 \\
        &= \frac{\lambda }{\mu}\left(\frac{2\phi\mu(1-\mu)}{(1+\mu)^5} \otr[\bSigma_0]\otr[\bB]+ \frac{2\lambda}{(1+\mu)^3}\otr[\bSigma_0\bB] + \frac{4\phi\mu^2}{(1+\mu)^5}\otr[\bSigma_0\bB]\right)\\
        &\quad + \frac{2\phi^2\mu^3}{(1+\mu)^7} (  \otr[\bSigma_0\bB] -\otr[\bSigma_0]\otr[\bB] )\\
        &= \frac{\lambda }{\mu}\left(\frac{2\phi\mu(1-\mu)}{(1+\mu)^5} \otr[\bSigma_0]\otr[\bB]+ \frac{2[\mu(1+\mu)^2 - \phi\mu(1-\mu)]}{(1+\mu)^5}\otr[\bSigma_0\bB]\right)\\
        &\quad + \frac{2\phi^2\mu^3}{(1+\mu)^7} (  \otr[\bSigma_0\bB] -\otr[\bSigma_0]\otr[\bB] )\\
        &= \frac{\lambda }{\mu}\left(\frac{2\phi\mu(1-\mu)}{(1+\mu)^5} (\otr[\bSigma_0]\otr[\bB]-\otr[\bSigma_0\bB])+ \frac{2\mu}{(1+\mu)^3}\otr[\bSigma_0\bB]\right)\\
        &\quad + \frac{2\phi^2\mu^3}{(1+\mu)^7} (  \otr[\bSigma_0\bB] -\otr[\bSigma_0]\otr[\bB] ).
    \end{align*}

    For the variance term, from \Cref{lem:ood-risk-deri}, we have
    \begin{align*}
        (1 - q_v(\bI, \bI))^2\sV'(\lambda,\phi) %
        &= \sigma^2[ q_v'(\bSigma_0, \bI)(1 - q_v(\bI, \bI)) + q_v(\bSigma_0, \bI) q_v'(\bI, \bI)]\\
        &= \sigma^2\left[ q_v'(\bSigma_0, \bI) \left(\frac{\lambda}{\mu} + \mu q_v(\bI, \bI) \right) + q_v(\bSigma_0, \bI) q_v'(\bI, \bI)\right]\\
        &= -\sigma^2\frac{2\lambda}{\mu(1+\mu)}q_v(\bSigma_0, \bI) - \sigma^2 \frac{2}{1+\mu}\left[ \mu q_v(\bSigma_0, \bI) q_v(\bI, \bI) + q_v(\bSigma_0, \bI) q_v(\bI, \bI)\right]\\
        &= -\sigma^2\frac{2\lambda\phi}{\mu(1+\mu)^3} \otr[\bSigma_0] - \sigma^2 \frac{2\phi^2}{(1+\mu)^4}\otr[\bSigma_0] .
    \end{align*}
    From the fixed-point equation \eqref{eq:basic-ridge-equivalence-mu-fixed-point} we have $\mu(1+\mu)-\phi\mu =  \lambda(1+\mu)$ and thus,
    \begin{align*}
        (1 - q_v(\bI, \bI))^2\sV'(\lambda,\phi)& = -\sigma^2\frac{2\phi(\mu(1+\mu)-\phi\mu)}{\mu(1+\mu)^4} \otr[\bSigma_0] - \sigma^2 \frac{2\phi^2}{(1+\mu)^4}\otr[\bSigma_0]\\
        &= -\sigma^2 \frac{2\phi}{(1+\mu)^3}\otr[\bSigma_0],
    \end{align*}
    which is strictly increasing in $\lambda\geq 0$.

    Then we have 
    \begin{align}
        (1 - q_v(\bI, \bI))^2\sR'(\lambda,\phi) \notag
        &= (1 - q_v(\bSigma, \bSigma))^2[\sB'(\lambda,\phi) + \sV'(\lambda,\phi)]\notag\\
         &\geq  (1 - q_v(\bSigma, \bSigma))^2|_{\lambda=0} [\sB'(0;\phi) + \sV'(0;\phi)]\notag\\
         &= 2\lambda\left(\frac{\phi(1-\mu)}{(1+\mu)^5} (\otr[\bSigma_0]\otr[\bB]-\otr[\bSigma_0\bB])+ \frac{1}{(1+\mu)^3}\otr[\bSigma_0\bB] \right)\notag \\
         &\qquad + \frac{2\phi^2\mu^3}{(1+\mu)^7} (  \otr[\bSigma_0\bB] -\otr[\bSigma_0]\otr[\bB] ) - \sigma^2 \frac{2\phi}{(1+\mu)^3}\otr[\bSigma_0]\notag  \\
         &=  \frac{2\lambda\phi}{(1+\mu)^5}
         \left( \frac{(1+\mu)^2 -\phi}{\phi}\otr[\bSigma_0\bB] + \otr[\bSigma_0]\otr[\bB]\right) \label{eq:thm:stationary-point-cov-shift-eq-1}\\
         &\qquad + \left(\frac{2\lambda\phi\mu}{(1+\mu)^5} - \frac{2\lambda\phi\mu^2}{(1+\mu)^6} \right)\left(  \otr[\bSigma_0\bB] -\otr[\bSigma_0]\otr[\bB] \right)  \label{eq:thm:stationary-point-cov-shift-eq-2}\\
         &\qquad + \frac{2\phi\mu^3}{(1+\mu)^6} \left(  \otr[\bSigma_0\bB] -\otr[\bSigma_0]\left(\otr[\bB] + \frac{(1+\mu)^3}{\mu^3}\sigma^2\right)\right) \label{eq:thm:stationary-point-cov-shift-eq-3}.
    \end{align}
    Note that when $\phi>1$, $\mu(\lambda,\phi)>0$ from \Cref{lem:fixed-point-v-lambda-properties}.
    Under the alignment condition, we have 
    \begin{align*}
        \otr[\bSigma_0\bB] > \otr[\bSigma_0]\left(\otr[\bB] + \frac{(1+\mu(0,\phi))^3}{\mu(0,\phi)^3}\sigma^2\right) \geq \otr[\bSigma_0]\otr[\bB].
    \end{align*}    
    Thus, for $\lambda\geq 0$, the second term \eqref{eq:thm:stationary-point-cov-shift-eq-2} is non-negative, and the third term \eqref{eq:thm:stationary-point-cov-shift-eq-3} is strictly positive.
    When $\lambda\geq 0$, from the fixed-point equation \eqref{eq:basic-ridge-equivalence-mu-fixed-point} we have $\phi = (1+\mu) - \lambda(1+\mu)/\mu\leq 1+\mu$ and $(1+\mu)^2-\phi\geq \phi (1+\mu) - \phi=\phi\mu$.
    Then, we know that the first term \eqref{eq:thm:stationary-point-cov-shift-eq-1} is non-negative.
    Therefore, it follows that for all $\lambda\geq 0$,
    \begin{align*}
        (1 - q_v(\bSigma, \bSigma))^2\sR'(\lambda,\phi) > 0.
    \end{align*}
    This implies that $\sR(\lambda,\phi)$ is minimized at $\lambda<0$.

    \paragraph{Part (3) Overparameterized regime and $\bSigma_0=\bI$.}
    When $\bSigma_0=\bI$, the above derivative in Part (1) becomes
    \begin{align}
        &(1 - q_v(\bSigma, \bSigma))^2\sB'(\lambda, \phi) \notag \\
        &= \frac{\lambda }{\mu}\left(q_v'(\bI, \bSigma) q_b(\bSigma, \bB) + q_v(\bI, \bSigma) q_b'(\bSigma, \bB) +  \frac{\lambda}{\mu} q_b'(\bI, \bB) + 2\mu q_b'(\bI, \bB)q_v(\bSigma, \bI)\right) \notag \\
        &\quad+ [\mu q_v'(\bI, \bSigma) q_b(\bSigma, \bB) q_v(\bSigma, \bI) + \mu q_v(\bI, \bSigma) q_b'(\bSigma, \bB)  q_v(\bSigma, \bI) \label{eq:bias_derivative_estim_risk_main_term_1} \\
        &\qquad + q_v(\bI, \bSigma) q_b(\bSigma, \bB) q_v'(\bSigma, \bSigma) + \mu^2 q_b'(\bI, \bB) q_v(\bSigma, \bI)^2 ] \label{eq:bias_derivative_estim_risk_main_term_2}
    \end{align}
    Recalling the definitions of $q_b(\cdot, \cdot)$ and $q_v(\cdot, \cdot)$ from \eqref{eq:def_q_b} and \eqref{eq:def_q_v}, observe that 
    \begin{align*}        
        q_b(\bI, \bB) &= \mu^2 \otr[(\bSigma + \mu \bI)^{-2}\bB] \\
        q_b'(\bI, \bB) &= 2\mu \otr[(\bSigma + \mu \bI)^{-2}\bB] - 2\mu^2\otr[(\bSigma + \mu \bI)^{-3}\bB] = \frac{2}{\mu} q_b(\bI, \bB) - 2 \mu^2 \otr[(\bSigma + \mu \bI)^{-3} \bB]\\
        &= 2\mu\otr[(\bSigma+\mu\bI)^{-3}\bSigma\bB] ,\\
        q_v(\bI, \bSigma) &= \phi \otr[(\bSigma + \mu \bI)^{-2} \bSigma]  \\
        q_v'(\bI, \bSigma) &= - 2\phi \otr[(\bSigma + \mu \bI)^{-3}\bSigma] .
    \end{align*}
    Next, we work on terms \eqref{eq:bias_derivative_estim_risk_main_term_1} and \eqref{eq:bias_derivative_estim_risk_main_term_2} that do not involve a factor of $\lambda / \mu$.
    \begin{align*}
        & \mu q_v'(\bI, \bSigma) q_b(\bSigma, \bB)   q_v(\bSigma, \bI) \\
        &\quad+ \mu q_v(\bI, \bSigma) q_b'(\bSigma, \bB)  q_v(\bSigma, \bI) + q_v(\bI, \bSigma) q_b(\bSigma, \bB) q_v'(\bSigma, \bSigma) \\
        &\quad + \mu^2 q_b'(\bI, \bB) q_v(\bSigma, \bI)^2  \\
        &= -2\mu\phi \otr[(\bSigma+\mu \bI)^{-3}\bSigma]  q_b(\bSigma, \bB)   q_v(\bSigma, \bI) \\
        & \quad + \mu\left(\frac{2}{\mu}q_b(\bSigma,\bB) - 2\mu^2\otr[(\bSigma+\mu\bI)^{-3}\bSigma\bB]\right)  q_v(\bI, \bSigma)  q_v(\bSigma, \bI)  \\
        &\quad - 2\phi\otr[(\bSigma+\mu\bI)^{-3}\bSigma^2] q_v(\bI, \bSigma) q_b(\bSigma, \bB)  \\
        &\quad + \mu^2 \left(\frac{2}{\mu}q_b(\bI,\bB) - 2\mu^2\otr[(\bSigma+\mu\bI)^{-3}\bB]\right) q_v(\bSigma, \bI)^2\\
        &=2\mu\phi\left( -\otr[(\bSigma+\mu \bI)^{-3}\bSigma] + \frac{1}{\mu}\otr[(\bSigma+\mu \bI)^{-2}\bSigma] - \otr[(\bSigma+\mu\bI)^{-3}\bSigma^2] \right) q_b(\bSigma, \bB)   q_v(\bSigma, \bSigma) \\
        &\quad + 2\mu^2\left(\frac{1}{\mu}q_b(\bI,\bB) - \mu^2\otr[(\bSigma+\mu\bI)^{-3}\bB]
        - \mu\otr[(\bSigma+\mu\bI)^{-3}\bSigma\bB] 
        \right) q_v(\bSigma, \bI)^2 \\
        &= 0.
    \end{align*}
    This implies that $\sB'(0,\phi) = 0$ and
    \begin{align*}
        &(1 - q_v(\bSigma, \bSigma))^2\sB'(\lambda, \phi) \notag \\
        &= \frac{\lambda }{\mu}\left(q_v'(\bI, \bSigma) q_b(\bSigma, \bB) + q_v(\bI, \bSigma) q_b'(\bSigma, \bB)  + 2\mu q_b'(\bI, \bB)q_v(\bSigma, \bI)\right) +  \frac{\lambda^2}{\mu^2} q_b'(\bI, \bB)\\
        &= \frac{\lambda }{\mu}\left(q_v'(\bI, \bSigma) q_b(\bSigma, \bB) + q_v(\bI, \bSigma) q_b'(\bSigma, \bB) + 2\mu q_b'(\bI, \bB)q_v(\bSigma, \bI)\right) +  \frac{\lambda^2}{\mu^2} q_b'(\bI, \bB) \\
        &= \frac{\lambda }{\mu}\bigg(
        \frac{2}{\mu} \left(-\mu\phi \otr[(\bSigma+\mu \bI)^{-3}\bSigma]     + q_v(\bSigma,\bI )\right) q_b(\bSigma,\bB) 
         -2\mu^2\otr[(\bSigma+\mu\bI)^{-3}\bSigma\bB] q_v(\bSigma, \bI) \\
        &\qquad  +4\mu^2\otr[(\bSigma+\mu\bI)^{-3}\bSigma\bB]  q_v(\bSigma, \bI)\bigg) +  \frac{\lambda^2}{\mu^2} q_b'(\bI, \bB) \\
        &= \frac{\lambda }{\mu}\left(
        \frac{2}{\mu}\phi \otr[(\bSigma+\mu \bI)^{-3}\bSigma^2] 
         q_b(\bSigma,\bB) 
         + 2\mu^2\otr[(\bSigma+\mu\bI)^{-3}\bSigma\bB] q_v(\bSigma, \bI) \right) +  \frac{\lambda^2}{\mu^2} q_b'(\bI, \bB)  \\
        &= 2\phi\lambda \left(
         \otr[(\bSigma+\mu \bI)^{-3}\bSigma^2] \otr[(\bSigma+\mu \bI)^{-2}\bSigma\bB]
         + \mu\otr[(\bSigma+\mu\bI)^{-3}\bSigma\bB] \otr[(\bSigma+\mu\bI)^{-2}\bSigma] \right)\\
         &\qquad +  2\lambda\left(1 - \phi\otr[(\bSigma+\mu\bI)^{-1}\bSigma]\right)\otr[(\bSigma+\mu\bI)^{-3}\bSigma\bB] \\
         &= 2\phi\lambda \left(
         \otr[(\bSigma+\mu \bI)^{-3}\bSigma^2] \otr[(\bSigma+\mu \bI)^{-2}\bSigma\bB]
         -\otr[(\bSigma+\mu\bI)^{-3}\bSigma\bB] \otr[(\bSigma+\mu\bI)^{-2}\bSigma^2] \right)\\
         &\qquad +  2\lambda \otr[(\bSigma+\mu\bI)^{-3}\bSigma\bB] \\
         &= 2\phi\lambda 
         \otr[(\bSigma+\mu \bI)^{-3}\bSigma^2] \otr[(\bSigma+\mu \bI)^{-2}\bSigma\bB]
         \\
         &\qquad +  2\lambda \otr[(\bSigma+\mu\bI)^{-3}\bSigma\bB] (1 - \phi\otr[(\bSigma+\mu\bI)^{-2}\bSigma^2]) .
    \end{align*}
    From \Cref{lem:ridge-fixed-point-v-properties} \ref{lem:ridge-fixed-point-v-properties-item-vv-properties}, we know that $1 - \phi\otr[(\bSigma+\mu\bI)^{-2}\bSigma^2]\geq 0$.
    When $\phi>1$, it then follows that $\sB'$ is strictly negative for all $\lambda\in[\lambda_{\min}(\phi),0)$ and strictly positive for all $\lambda> 0$ because $\mu(\lambda,\phi)> 0$.
        
    For the variance term, from \Cref{lem:ood-risk-deri}, we have
    \begin{align*}
        &(1 - q_v(\bSigma, \bSigma))^2\sV'(\lambda,\phi) \\
        &= \sigma^2[ q_v'(\bI, \bSigma)(1 - q_v(\bSigma, \bSigma)) + q_v(\bI, \bSigma) q_v'(\bSigma, \bSigma)]\\
        &= \sigma^2\left[ q_v'(\bI,\bSigma) \left(\frac{\lambda}{\mu} + \mu q_v(\bSigma, \bI) \right) + q_v(\bI, \bSigma) q_v'(\bSigma, \bSigma)\right]\\
        &= - 2\phi\sigma^2\left[
          \otr[(\bSigma + \mu \bI)^{-3}\bSigma] \left(\frac{\lambda}{\mu} + \mu q_v(\bSigma, \bI) \right) + \otr[(\bSigma + \mu \bI)^{-3}\bSigma^2] q_v(\bI, \bSigma) 
         \right]\\
         &= - 2\phi\sigma^2\left[
          \frac{\lambda}{\mu}\otr[(\bSigma + \mu \bI)^{-3}\bSigma] +
           \otr[(\bSigma + \mu \bI)^{-2}\bSigma] q_v(\bSigma,\bI) 
         \right] \\
         &= - 2\phi\sigma^2\left[
          \left(1 - \phi\otr[(\bSigma+\mu\bI)^{-1}\bSigma]\right)\otr[(\bSigma + \mu \bI)^{-3}\bSigma] +
           \otr[(\bSigma + \mu \bI)^{-2}\bSigma] q_v(\bSigma,\bI) 
         \right]\\
         &= - 2\phi\sigma^2\left[
         \otr[(\bSigma + \mu \bI)^{-3}\bSigma] + \phi(\otr[(\bSigma + \mu \bI)^{-2}\bSigma]^2 -         \otr[(\bSigma+\mu\bI)^{-1}\bSigma] \otr[(\bSigma + \mu \bI)^{-3}\bSigma]) 
         \right],
    \end{align*}
    which is strictly negative for all $\lambda\geq \lambda_{\min}(\phi)$.

    Combining the above two derivatives, we conclude that $\lambda^* > 0$ in this case.
\end{proof}

\subsection{Proof of \Cref{thm:stationary-point-sig-shift}}
\label{sec:thm:stationary-point-sig-shift-proof}

\bigskip

\ThmStationaryPointSigShift*

\begin{proof}
    We split the proof into two parts.
    
    \paragraph{Part (1) Underparameterized regime.}
    From the proof of \Cref{thm:stationary-point-no-shift}, we know that when $\bSigma_0=\bSigma$, the bias term satisfies that 
    \[\min_{\lambda\in[\lambda_{\min}(\phi),0]}\sB(\lambda,\phi) \geq \sB(0,\phi) = 0,\]
    from \eqref{eq:stationary-eq-2}.
    From \Cref{lem:ood-risk-deri}, the excess bias term has the following derivative:
    \begin{align*}
        \sE'(\lambda, \phi) 
        &= - 2  \bbeta^\top \bSigma(\bSigma + \mu\bI)^{-2} \bSigma ({\bbeta}_0-\bbeta)= - 2  \bbeta^\top \bSigma(\bSigma + \mu\bI)^{-2} \bSigma (\bbeta - {\bbeta}_0) ,
    \end{align*}
    which is zero when $\lambda=0$ because $\mu(0,\phi)=0$ when $\phi<1$, from \Cref{lem:fixed-point-v-lambda-properties}.
    Under the condition that $\bbeta^\top \bSigma(\bSigma + \mu\bI)^{-2} \bSigma (\bbeta - {\bbeta}_0) < 0$ for all $\mu\geq 0$, we have that $\sE(\lambda,\phi)$ is strictly increasing in $\lambda\geq0$.

    From \eqref{eq:var-ind}, we have
    \begin{align*}
         \sV'(\lambda,\phi) &= \frac{- 2\sigma^2 \phi \otr[\bSigma^2 (\bSigma + \mu \bI)^{-3}] }{(1 - q_v(\bSigma, \bSigma))^2}.
    \end{align*}   
    Also, since $\sB\geq 0$ with equality holds if $\lambda=0$.
    Then we have, if
    \begin{align}
        \sS'(\lambda,\phi) + \sV'(\lambda,\phi) = - 2  \bbeta^\top \bSigma(\bSigma + \mu\bI)^{-2} \bSigma (\bbeta - {\bbeta}_0)  + \frac{- 2\sigma^2 \phi \otr[\bSigma^2 (\bSigma + \mu \bI)^{-3}] }{(1 - q_v(\bSigma, \bSigma))^2} > 0 \label{eq:sig-shift-gen-cond}
    \end{align}
    for all $\mu\geq \mu(0,\phi)$, then $\lambda^*<0$.
    Note that when $\lambda\rightarrow+\infty$, $\mu\rightarrow+\infty$ and the denominator of the second term tends to one, so we have $\sV'(\lambda,\phi) \asymp \mu^{-3}$.
    On the other hand, the first term scales as $\sS'(\lambda,\phi) \asymp\mu^{-2}$.
    Eventually, the first term dominates.
    Thus, the condition \eqref{eq:sig-shift-gen-cond} could hold when $\sS'(\lambda,\phi)$ is positive and large enough.
     
    Especially, when $\sigma^2=o(1)$ and the assumed alignment condition is met, it follows that $\sR(\lambda,\phi) = \sB(\lambda,\phi) + \sV(\lambda,\phi) + \sE(\lambda,\phi)$ is minimized over $\lambda<0$.
     
    \paragraph{Part (2) Overparameterized regime.}
    From the proof of \Cref{thm:stationary-point-no-shift}, we know that when \eqref{eq:align-cond}, $\sB(\lambda,\phi)+\sV(\lambda,\phi)$ is minimized at $\lambda<0$.
    Under the condition that $\bbeta^\top \bSigma(\bSigma + \mu\bI)^{-2} \bSigma (\bbeta - {\bbeta}_0) \leq 0$ for all $\mu>0$, we have $\sE'(\lambda, \phi)\geq 0$ over $\lambda \in (\lambda_{\min}(\phi),+\infty)$.
    This implies that $\sE(\lambda, \phi)$ is increasing over $\lambda \in (\lambda_{\min}(\phi),+\infty)$.
    Combining the two results, we further see that the risk $\sR(\lambda,\phi)$ is minimized at $\lambda<0$.
\end{proof}

\subsection{Helper Lemmas}
\label{sec:proofs-sec:optimal_regularization_signs-helper-lemmas}

\begin{lemma}[Out-of-distribution risk derivatives]\label{lem:ood-risk-deri}
    Under the same conditions as in \Cref{prop:ood-risk-asymptotics}, we have
    \begin{equation}
        \frac{\partial \sR(\lambda, \phi)}{\partial \lambda}
        = 
        \left( \sB'(\lambda, \phi) + \sV'(\lambda, \phi) + \sE'(\lambda, \phi) \right)
        \frac{\partial \mu}{\partial \lambda},
    \end{equation}
    where
    \begin{align*}
        \sB'(\lambda, \phi)
        &:= \frac{\partial \sB(\lambda, \phi)}{\partial \mu}
        = \frac{1}{(1 - q_v(\bSigma, \bSigma))^2}\Big[q_v'(\bSigma_0, \bSigma) q_b(\bSigma, \bB) (1 - q_v(\bSigma, \bSigma))\\
        &\qquad \qquad \qquad \quad + q_v(\bSigma_0, \bSigma) q_b'(\bSigma, \bB) (1-q_v(\bSigma, \bSigma)) + q_v(\bSigma_0, \bSigma) q_b(\bSigma, \bB) q_v'(\bSigma, \bSigma) \\
        &\qquad \qquad \qquad \qquad + q_b'(\bSigma_0, \bB)(1 - q_v(\bSigma, \bSigma))^2 \Big]\\
        \sV'(\lambda, \phi)
        &:= \frac{\partial \sV(\lambda, \phi)}{\partial \mu} 
        = \sigma^2 \frac{q'_v(\bSigma_0, \bSigma) - q'_v(\bSigma_0, \bSigma) q_v(\bSigma, \bSigma) + q'_v(\bSigma, \bSigma) q_v(\bSigma_0, \bSigma)}{(1 - q_v(\bSigma, \bSigma))^2} \\
        \sE'(\lambda, \phi) 
        &:= \frac{\partial \sE(\lambda, \phi)}{\partial \mu}
        = -\frac{2}{\mu^2}q_b(\bSigma_0,(\bB-\bB_0)\bSigma).
    \end{align*}
    and $\mu = 1/v_p(\lambda,\phi)$.
    Furthermore, we have $\sV'(\lambda, \phi)<0$ for all $\lambda\in(\lambda_{\min}(\phi),+\infty)$.
\end{lemma}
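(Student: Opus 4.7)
The plan is to reduce the claim to a chain-rule computation in the implicit variable $\mu$. Since $\mu = \mu(\lambda, \phi)$ is defined through the fixed-point equation $\mu = \lambda + \phi \mu \otr[\bSigma(\bSigma + \mu \bI)^{-1}]$, the implicit function theorem (equivalently, the properties developed in \Cref{sec:analytic-properties-fp-sols}) yields $\partial \mu / \partial \lambda > 0$ on the interval $(\lambda_{\min}(\phi), \infty)$. Thus $\partial \sR / \partial \lambda = (\partial \sR / \partial \mu) \cdot (\partial \mu / \partial \lambda)$, and since $\kappa^2$ is $\mu$-free, it suffices to additively compute $\partial \sB/\partial \mu + \partial \sV/\partial \mu + \partial \sE/\partial \mu$.

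I would then rewrite each component through the scalar functions $q_b$ and $q_v$ introduced at the beginning of \Cref{sec:proofs-sec:optimal_regularization_signs}. The key identity is $\tv = q_v(\bSigma_0, \bSigma)/(1 - q_v(\bSigma, \bSigma))$, so that $\sV = \sigma^2 \tv$ and (with $\bB = \bbeta \bbeta^\top$, absorbing the $p$-normalization consistently) $\sB = \tv\, q_b(\bSigma, \bB) + q_b(\bSigma_0, \bB)$. Differentiating $\sV$ via the quotient rule immediately yields
\begin{equation*}
    \sV'(\lambda, \phi) = \sigma^2\, \frac{q_v'(\bSigma_0, \bSigma)\,(1 - q_v(\bSigma, \bSigma)) + q_v(\bSigma_0, \bSigma)\, q_v'(\bSigma, \bSigma)}{(1 - q_v(\bSigma, \bSigma))^2},
\end{equation*}
which rearranges to the stated three-term numerator. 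Differentiating $\sB$ combines a quotient rule on $\tv$ with a product rule on $\tv\, q_b(\bSigma, \bB)$, giving the stated four-term expression once everything is pulled over the common denominator $(1 - q_v(\bSigma, \bSigma))^2$. For $\sE = 2\mu\, \bbeta^\top (\bSigma + \mu\bI)^{-1} \bSigma_0 (\bbeta_0 - \bbeta)$, I would use the elementary identity $\partial/\partial \mu [\mu(\bSigma + \mu\bI)^{-1}] = \bSigma(\bSigma + \mu\bI)^{-2} = (\bSigma + \mu\bI)^{-1}\bSigma(\bSigma + \mu\bI)^{-1}$, the rank-one factorization $\bB - \bB_0 = (\bbeta - \bbeta_0)\bbeta^\top$, and cyclic invariance of the trace to identify $\partial \sE / \partial \mu$ as $-(2/\mu^2)\, q_b(\bSigma_0, (\bB - \bB_0)\bSigma)$.

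For strict negativity of $\sV'$, the denominator $(1 - q_v(\bSigma, \bSigma))^2$ is positive, so I would establish negativity of the numerator. By direct differentiation under the trace, $q_v'(\bSigma_0, \bSigma) = -2\phi\, \otr[(\bSigma + \mu\bI)^{-3}\bSigma\bSigma_0] < 0$: the matrix $(\bSigma + \mu\bI)^{-3}\bSigma$ is symmetric and strictly positive definite whenever $\mu > -r_{\min}$ (using that $\bSigma$ and $(\bSigma + \mu\bI)^{-1}$ commute), and its trace inner product with the PSD matrix $\bSigma_0$ is strictly positive. The identical argument gives $q_v'(\bSigma, \bSigma) < 0$ and $q_v(\bSigma_0, \bSigma) > 0$. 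The remaining step is to show $1 - q_v(\bSigma, \bSigma) > 0$ on $(\lambda_{\min}(\phi), \infty)$. By \Cref{def:lower_bound_negative_regularization}, $\mu_{\min}$ is characterized as the unique solution to $q_v(\bSigma, \bSigma)|_{\mu = \mu_{\min}} = \phi\, \otr[\bSigma^2(\bSigma + \mu_{\min}\bI)^{-2}] = 1$, and $\mu \mapsto q_v(\bSigma, \bSigma)$ is strictly decreasing; combined with $\mu > \mu_{\min}$ whenever $\lambda > \lambda_{\min}(\phi)$ (from \Cref{sec:analytic-properties-fp-sols}), this forces $q_v(\bSigma, \bSigma) < 1$. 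The numerator is then a sum of two strictly negative terms, closing the argument.

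I expect the main obstacle to be bookkeeping rather than a deep conceptual difficulty. For the $\sE'$ formula, one must carefully track that $\bSigma$ commutes with $(\bSigma + \mu\bI)^{-1}$ while $\bSigma_0$ does not, and repeatedly invoke cyclic permutation of the trace against the rank-one factor $\bB - \bB_0 = (\bbeta - \bbeta_0)\bbeta^\top$ to land in the stated $q_b$ form. For the sign of $\sV'$, the subtlety is that positivity of $1 - q_v(\bSigma, \bSigma)$ on the widest feasible range cannot be read off by inspection, but must instead be tied back to the precise characterization of $\lambda_{\min}(\phi)$ via $\mu_{\min}$ in \Cref{def:lower_bound_negative_regularization}, together with strict monotonicity of the relevant Stieltjes-type trace quantities.
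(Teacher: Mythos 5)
Your proposal is correct and follows essentially the same route as the paper: express $\sB$, $\sV$, $\sE$ through the scalar functionals $q_b$ and $q_v$ with the key identity $\tilde v = q_v(\bSigma_0,\bSigma)/(1 - q_v(\bSigma,\bSigma))$, apply the quotient and product rules, separate out the chain-rule factor $\partial\mu/\partial\lambda$, and obtain strict negativity of $\sV'$ from $q_v'(\cdot,\bSigma)<0$ together with $1 - q_v(\bSigma,\bSigma)>0$, the latter traced back to the defining equation $\phi\,\otr[\bSigma^2(\bSigma+\mu_{\min}\bI)^{-2}]=1$ and the monotonicity of $\mu\mapsto q_v(\bSigma,\bSigma)$ (the paper delegates this last step to \Cref{lem:ridge-fixed-point-v-properties}, but the underlying argument is the one you give).
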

\begin{proof}
    We split the proof into different parts.

    \paragraph{Part (1) Bias term.}
    Recall the expression for bias for out-of-distribution squared risk:
    \begin{align*}
        \sB(\lambda,\phi)  &= \frac{q_v(\bSigma_0, \bSigma)}{1 - q_v(\bSigma, \bSigma)} q_b(\bSigma, \bB) + q_b(\bSigma_0, \bB) 
    \end{align*}

    Define
    \begin{align}
        h_1(\mu) &=  q_v(\bSigma_0, \bSigma) ,\qquad 
        h_2(\mu) = \frac{q_b(\bSigma, \bB)}{1 - q_v(\bSigma, \bSigma)} ,\qquad h_3(\mu) = q_b(\bSigma_0, \bB) .
    \end{align}
    Then we have
    \begin{align*}
        \sB(\lambda, \phi) &= h_1(\mu) h_2(\mu) + h_3(\mu),
    \end{align*}
    and
    \begin{align*}
        \sB'(\lambda, \phi) 
        &= h_1'(\mu) h_2(\mu) + h_1(\mu) h_2'(\mu) + h_3'(\mu)\\
        &= q_v'(\bSigma_0, \bSigma)\frac{q_b(\bSigma, \bB)}{1 - q_v(\bSigma, \bSigma)}  + q_v(\bSigma_0, \bSigma)\frac{q_b'(\bSigma, \bB)(1 - q_v(\bSigma, \bSigma)) + q_b(\bSigma, \bB) q_v'(\bSigma, \bSigma) }{(1 - q_v(\bSigma, \bSigma))^2} + q_b'(\bSigma_0, \bB)\\
        &= \frac{1}{(1 - q_v(\bSigma, \bSigma))^2} \Big[ q_v'(\bSigma_0, \bSigma) q_b(\bSigma, \bB) (1 - q_v(\bSigma, \bSigma))\\
        &\quad+ q_v(\bSigma_0, \bSigma) q_b'(\bSigma, \bB) (1-q_v(\bSigma, \bSigma)) + q_v(\bSigma_0, \bSigma) q_b(\bSigma, \bB) q_v'(\bSigma, \bSigma) \\
        &\quad + q_b'(\bSigma_0, \bB)(1 - q_v(\bSigma, \bSigma))^2 \Big].
    \end{align*}
    
    \paragraph{Part (2) Variance term.} Recall that the variance term is given by:
    \[
        \sV(\lambda,\phi) = \sigma^2 \frac{q_v(\bSigma_0, \bSigma)}{1 - q_v(\bSigma, \bSigma)}.
    \]
    The derivative in $\mu$ is:
    \begin{align*}
        \sV'
        &= \sigma^2
        \frac{1}{(1 - q_v(\bSigma, \bSigma))^2}
        \{ q'_v(\bSigma_0, \bSigma) (1 - q_v(\bSigma, \bSigma)) + q_v(\bSigma_0, \bSigma) q'_v(\bSigma, \bSigma)  \}.
    \end{align*}
    Note that
    \begin{align*}
        q'_v(\bSigma, \bSigma) & = -2\phi\otr[\bSigma^2(\bSigma+\mu\bI)^{-2}] < 0\\
        q_v'(\bSigma_0,\bSigma) &= -2\phi\otr[\bSigma_0\bSigma(\bSigma+\mu\bI)^{-2}] = -2\phi\otr[(\bSigma+\mu\bI)^{-1}\bSigma^{1/2}\bSigma_0\bSigma^{1/2}(\bSigma+\mu\bI)^{-1}]<0.
    \end{align*}
    From \Cref{lem:ridge-fixed-point-v-properties}, we also have $q_v(\bSigma,\bSigma) > 0$ and $1-q_v(\bSigma,\bSigma) > 0$ when $\lambda\in(\lambda_{\min}(\phi),+\infty)$.
    Therefore, it holds that
    \begin{align*}
        \sV'(\lambda,\phi) <0.
    \end{align*}

    \paragraph{Part (3) Extra bias term.} Recall that the extra bias term is given by
    \[\sE(\lambda, \phi) = - 2\bbeta^\top (v\bSigma + \bI)^{-1} \bSigma_0 (\bbeta - {\bbeta}_0) = - 2\mu\bbeta^\top (\bSigma + \mu\bI)^{-1} \bSigma_0 (\bbeta - {\bbeta}_0) = l(\bSigma_0,(\bB-\bB_0)\bSigma). \]
    Then, the derivative is given by:
    \begin{align*}
        \sE'(\lambda, \phi) &= -2 \bbeta^\top \bSigma(\bSigma + \mu\bI)^{-2} \bSigma_0 (\bbeta - {\bbeta}_0) 
        = -\frac{2}{\mu^2}q_b(\bSigma_0,(\bB-\bB_0)\bSigma).  
    \end{align*}
\end{proof}

\section{Proofs in \Cref{sec:optimal_risk_monotonicity}}
\label{sec:proofs-sec:optimal_risk_monotonicity}

\subsection{Proof of \Cref{lem:optimal-risk-iso-signal}}
\label{sec:lem:optimal-risk-iso-signal-proof}

\bigskip

\LemOptimalRiskIsoSignal*

\begin{proof}
    When $\bbeta=\bbeta_0$ and $\bbeta\bbeta^{\top}\asympequi\alpha^2\bI$, from \eqref{eq:identity-qv}, we have that
    \begin{align*}
        \sB(\lambda,\phi)  &= \alpha^2\frac{q_v(\bSigma_0, \bSigma)}{1 - q_v(\bSigma, \bSigma)} q_b(\bSigma, \bI) + \alpha^2q_b(\bSigma_0, \bI)  \\
        &= \alpha^2\frac{\mu^2}{\phi} \left(\frac{q_v(\bSigma_0, \bSigma)}{1 - q_v(\bSigma, \bSigma)} q_v(\bSigma, \bI) + q_v(\bSigma_0, \bI) \right)\\
        &= \alpha^2\frac{\mu}{\phi} \left(\frac{q_v(\bSigma_0, \bSigma)}{1 - q_v(\bSigma, \bSigma)} \left(1 - q_v(\bSigma, \bSigma) - \frac{\lambda}{\mu} \right) + \mu q_v(\bSigma_0, \bI) \right) \\
        &= \alpha^2\frac{\mu}{\phi} \big[q_v(\bSigma_0, \bSigma)  + \mu q_v(\bSigma_0, \bI)\big] - \alpha^2\frac{\lambda}{\phi} \frac{q_v(\bSigma_0, \bSigma)}{1 - q_v(\bSigma, \bSigma)} \\
        &=\alpha^2 \mu\otr[\bSigma_0(\bSigma+\mu\bI)^{-1}] - \alpha^2\frac{\lambda}{\phi} \frac{q_v(\bSigma_0, \bSigma)}{1 - q_v(\bSigma, \bSigma)}. 
    \end{align*}
    Therefore, from \Cref{prop:ood-risk-asymptotics} and \Cref{thm:stationary-point-sig-shift}, the optimal risk is given by
    \begin{align*}
        R(\hat{\bbeta}^{\lambda^*}) &\asympequi\sB_p(\lambda^*, \phi) + \sV_p(\lambda^*, \phi) + \sigma_0^2\\
        &= \alpha^2 \mu^*\otr[\bSigma_0(\bSigma+\mu^*\bI)^{-1}] + \left(\sigma^2- \frac{\alpha^2\lambda^*}{\phi}\right)\frac{q_v(\bSigma_0, \bSigma) }{1-q_v(\bSigma, \bSigma)} + \sigma_0^2\\
        &=  \alpha^2 \mu^*\otr[\bSigma_0(\bSigma+\mu^*\bI)^{-1}] + \sigma_0^2\\
        &=  \alpha^2 \otr[\bSigma_0(v^*\bSigma+\bI)^{-1}] + \sigma_0^2,
    \end{align*}
    where $\mu^*=\mu(\lambda^*,\phi)$ and $v^*=v(\lambda^*,\phi)$.

    Note that when $\sigma^2$ and $\sigma_0^2$ are fixed, $\sR(\lambda^*,\phi) = \phi\sigma^2 \cdot (\mu^*/\lambda^*) \cdot \otr[\bSigma_0(\bSigma+\mu^*\bI)^{-1}]+\sigma_0^2$ is simply a function of $\lambda^*$.
    From \Cref{lem:fixed-point-v-lambda-properties} \ref{lem:fixed-point-v-lambda-properties-item-monotonicity-lam-mu}, we have that $\mu^*/\lambda^*$ is strictly decreasing in $\lambda^*$.
    Also note that $\otr[\bSigma_0(\bSigma+\mu^*\bI)^{-1}]$ is strictly decreasing in $\lambda^*$.
    Thus, we know that $\sR(\lambda^*,\phi) $ is strictly decreasing in $\lambda^*$ when $\phi,\sigma,\sigma_0$ are fixed.
    Because $\lambda^*$ is strictly decreasing in $\SNR$, we further find that $\sR(\lambda^*,\phi) $ is strictly increasing in $\SNR$.
\end{proof}

\subsection{Proof of \Cref{thm:monotonicty}}
\label{sec:thm:monotonicity-proof}

\bigskip

\ThmMonotonicty*

\begin{proof}
    We split the proof into different parts.

    \paragraph{Part (1) Risk characterization and equivalence.}
    From the proof of \Cref{thm:negative-lam}, we have
     \begin{align*}
        R(\hat{\bbeta}^{\lambda}) &\asympequi \sR_p(\lambda, \phi,\phi)
    \end{align*}
    where $\sR_p$ is defined in \eqref{eq:risk-det-equiv-ensemble}.
    Furthermore, for any $\bar{\psi}\in[\phi,+\infty]$, there exists a unique $\bar{\lambda}\geq \lambda_{\min}(\phi)$ defined through \eqref{eq:lambda_bar}.
    For any pair of $(\lambda_1, \psi_1)$ and $(\lambda_2, \psi_2)$ on the path
    \smash{$\cP(\bar{\lambda}; \phi, \bar{\psi})$} as defined in \eqref{eq:path}, we have that:
    \[\sR_p(\lambda_1;\phi,\psi_1) = \sR_p(\lambda_2;\phi,\psi_2). \]
    
    \paragraph{Part (2) Risk monotonicity.}
    From \Cref{lem:ridge-fixed-point-v-properties} (3), we know that the denominator of $\tv_p$ defined in \eqref{eq:tv_Sigma0} is non-negative:
    \[v_p(\lambda, \psi)^{-2}-\phi \int r^2 (1+v_p(\lambda, \psi)r)^{-2}\rd H_p(r)\geq v_p(\lambda, \psi)^{-2}-\psi \int r^2 (1+v_p(\lambda, \psi)r)^{-2}\rd H_p(r)\geq 0\] 
    for $\lambda\geq \lambda_{\min}(\phi)$.
    Therefore, $\sR_p(\lambda, \phi,\psi)$ is increasing in $\phi$ for any fixed $(\lambda,\psi)$.
    Furthermore, since $\sR_p(\lambda, \phi,\psi)$ is a continuous function of $\phi$ and $v(\lambda;\psi)$, it follows that for $0<\phi_1\leq \phi_2<\infty$,
    \begin{align*}
        \min_{\psi \ge \phi_1}
        \sR_p(\lambda, \phi_1,\psi)
        &\le
        \min_{\psi \ge \phi_2}
        \sR_p(\lambda, \phi_1,\psi) \le
        \min_{\psi \ge \phi_2}
        \sR_p(\lambda, \phi_2,\psi),
    \end{align*}
    where the first inequality follows because $\{ \psi: \psi \ge \phi_1 \} \supseteq \{ \psi : \psi \ge \phi_2 \}$, and the second inequality follows because $\sR_p(\lambda, \phi,\psi)$ is increasing in $\phi$ for a fixed $\psi$.
    Thus, $\min_{\psi \ge \phi}\sR_p(\lambda, \phi,\psi)$ is a continuous and monotonically increasing function in $\phi$.

    \paragraph{Part (3) Optimal subsampling and regularization.}
    Similar to the proof of Part (3) in \Cref{thm:monotonicty-gen}, we have
    \begin{align*}
        \min_{\psi\geq\phi} \sR_p(0;\phi,\psi) \asympequi \min_{\lambda\geq\lambda_{\min}(\phi)+\epsilon}R(\hat{\bbeta}^{\lambda}).
    \end{align*}
    From Part (3), we know that the former is continuous and monotonically increasing in $\phi$, which finishes the proof.  

    \paragraph{Part (4) Monotonicity in signal-to-noise ratio.}
    When $\bbeta_0=\bbeta$, the extra bias term $\sS$ is zero.
    When $\sigma^2$ is fixed, note that $\sB$ and $\kappa$ are strictly increasing in $\alpha^2$ while $\sV$ does not depend on $\alpha^2$.
    Thus, we know that $\sR_p$ is strictly increasing in $\SNR$.
    Consequently, $\min_{\psi\geq\phi} \sR_p(0;\phi,\psi)$ is strictly increasing in $\SNR$.
\end{proof}

\subsection{Proof of \Cref{thm:nonmonotonicity}}
\label{sec:thm:nonmonotonicity-proof}

\bigskip

\ThmNonmonotonicity*

\begin{proof}
    For isotropic features $\bSigma=\bI$, analogous to the proof of \Cref{thm:monotonicty}, the excess prediction risk is given by
    \begin{align*}
        R\big(\hbeta^{\lambda}\big) \asympequi R_p(\lambda, \phi,\phi)
    \end{align*}
    where $R_p(\lambda, \phi,\psi)$ is defined in \eqref{eq:risk-det-equiv}.
    When $\bSigma=\bI$, the non-negative constants $\tc_p(\lambda, \psi)$ and $\tv_p(\lambda, \phi,\psi)$ are defined through the following equations:
    \begin{align*}
        v_p(\lambda, \phi) &= \frac{\sqrt{(\phi+\lambda-1)^2+4 \lambda}-(\phi+\lambda-1)}{2\lambda},\\
        \tv_p(\lambda, \phi,\psi) &= \ddfrac{\phi \frac{1}{(1+v_p(\lambda, \psi))^2}}{v_p(\lambda, \psi)^{-2}-\phi \frac{1}{(1+v_p(\lambda, \psi))^2}},\\
         \tc_p(\lambda, \psi) &=
         (v_p(\lambda, \psi)\bSigma+\bI)^{-2} \alpha^2.
    \end{align*}
    
    From Theorem 1 of \citet{patil2023generalized}, for all $(\lambda, \phi)\in(0,\infty)^2$, there exists $\psi = \psi(\lambda,\phi)$ such that the prediction risk \eqref{eq:generalized-risk} of the full-ensemble estimator are asymptotically equivalent:
    \begin{align*}
        R_p(\lambda, \phi,\phi)~\asympequi~R_p(0;\phi,\psi).
    \end{align*}
    Note that the left-hand side is simply the risk of the ridge predictor with ridge penalty $\lambda$.    
    Furthermore, from \eqref{eq:lambda_bar}, it holds that
    \begin{align*}
        \lambda + \phi \int \frac{r}{1 + v_p(0;\psi) r} \rd H(r) = \psi \int \frac{r}{1 + v_p(0;\psi) r} \rd H(r).
    \end{align*}
    Taking the derivative with respect to $\phi$ on both sides yields that
    \begin{align}
        \frac{\partial \psi}{\partial \phi} & = 1 - \frac{\lambda v_p(0;\psi)}{1 - \lambda v_p(0;\psi)} \tv_p(0;\phi,\psi).
    \end{align}

    We consider three cases below.
    
    \paragraph{(1) $\alpha^2=0$ and $\sigma^2>0$.}
    In this case, the excess risk equals the variance component:
    \begin{align*}
        R_p'(\lambda, \phi,\phi) &= \sigma^2 \tv_p(\lambda, \phi,\phi) \\
        &= \sigma^2 \left( \ddfrac{1}{1-\phi \int\left(\frac{v_p(\lambda, \phi)r}{1+v_p(\lambda, \phi)r}\right)^2\rd H_p(r)} - 1\right).
    \end{align*}
    Let $f(\phi) := \phi \int\left({v_p(\lambda, \phi)r}/{(1+v_p(\lambda, \phi)r)}\right)^2\rd H_p(r)$. Then, the monotonicity of the above display in $\phi$ is the same as that of $f$ in $\phi$.    
    Note that
    \begin{align}
        f(\phi)
        &= \phi \left(\frac{\sqrt{(\phi+\lambda-1)^2+4 \lambda}-(\phi+\lambda-1)}{\sqrt{(\phi+\lambda-1)^2+4 \lambda}-(\phi-\lambda-1)}\right)^2.
    \end{align}
    Taking the derivative with respect to $\phi$, we have
    \begin{align*}
        f'(\phi)
        &=\frac{(-\phi+\lambda+1) \left(\sqrt{(\phi+\lambda-1)^2+4
        \lambda}-\phi-\lambda +1\right)^2}{\sqrt{(\phi+\lambda-1)^2+4 \lambda} \left(\sqrt{(\phi+\lambda-1)^2+4 \lambda}+\lambda-\phi+1\right)^2}.
    \end{align*}
    Since $f'(\phi)>0$ when $\phi\in(0,\lambda+1)$ and $f'(\phi)<0$ when $\phi\in(\lambda+1,+\infty)$, we know that $R_p$ is strictly increasing over $\phi\in(0,\lambda+1)$ and strictly decreasing over $\phi\in(\lambda+1,+\infty)$.
    Thus, the monotonicity of $R_p(\lambda, \phi,\phi)$ follows.
    
    \paragraph{(2) $\alpha^2>0$ and $\sigma^2=0$.}
    In this case, the excess risk equals the bias component:
    \begin{align*}
        R_p(\lambda, \phi,\phi) &= \tc_p (-\lambda ;\phi)( \tv_p(\lambda, \phi,\phi) + 1)\\
        &= \alpha^2 \ddfrac{\frac{1}{(1 + v_p(\lambda, \phi))^2}}{ 1 - \phi\frac{1}{(1 + v_p(\lambda, \phi))^2}} \\
        &= \frac{\alpha^2 }{(1-\phi )v_p(\lambda, \phi)^2 + 2v_p(\lambda, \phi) +1 }.
    \end{align*}
    Let $g(\phi) = (1-\phi )v_p(\lambda, \phi)^2 + 2v_p(\lambda, \phi) +1 $. 
    Taking the derivative with respect to $\phi$ yields
    \begin{align*}
        g'(\phi)  &= \frac{\left(\sqrt{(\phi+\lambda-1)^2+4 \lambda}-\lambda-\phi+1\right)}{4\lambda^2 \sqrt{(\lambda+\phi-1)^2+4 \lambda}} \\
        &\qquad\cdot \left(\lambda+3 (\phi-1) \sqrt{\lambda^2+2 \lambda (\phi+1)+\phi^2-2 \phi+1}-\lambda^2-4 \lambda (\phi+1)-3 (\phi-1)^2\right)\\
        &=: \frac{\left(\sqrt{(\phi+\lambda-1)^2+4 \lambda}-\lambda-\phi+1\right)}{4\lambda^2 \sqrt{(\lambda+\phi-1)^2+4 \lambda}} h(\phi).
    \end{align*}
    By simple calculations, one can show that $h'(\phi)<0$ and $h(\phi)\leq h(0)<-2\lambda$ when $\lambda>0$.
    Therefore, we have $g'(\phi)<0$ and $R_p$ is strictly increasing over $\phi\in(0,\infty)$.

    \paragraph{(3) General cases when $\alpha^2>0$.}
    Note that
    \begin{align*}
        R_p(\lambda, \phi,\phi) &= \sigma^2 \tv_p(\lambda, \phi,\psi) + \tc_p (-\lambda ;\phi)( \tv_p(\lambda, \phi,\phi) + 1) \\
        &=: f_1(\phi) + f_2(\phi),
    \end{align*}
    where $f_1$ first increases and then decreases in $\phi$, and $f_2$ is a strictly increasing function.
    Note that only $f_1$ depends on $\sigma^2$.
    Because for any $\lambda>0$ and $\phi\in(\lambda+1,\infty)$, $f_1'(\phi)<0$ and its scale is proportional to $\sigma^2$, we have that for all $\epsilon>0$, there exists $\sigma^2>0$ such that $-f_1'(\phi)>f_2'(\phi)+\epsilon$.
    This implies that
    \[\max_{\sigma^2,\phi\in(0,\infty)} \min_{\lambda\geq 0} \frac{\partial \sR(\lambda, \phi)}{\partial \phi} \leq -\epsilon,\]
    which completes the proof.
\end{proof}

\subsection{Proof of \Cref{thm:negative-lam}}
\label{sec:thm:negative-lam-proof}

\bigskip

\ThmNegativeLam*

\begin{proof}
    We split the proof into different parts.
    
    \paragraph{Part (1) Risk characterization.} From \Cref{lem:ood-risk-asymptotics-ensemble}, we have $R(\hat{\bbeta}^{\lambda}_{k,\infty}) \asympequi \sR_p(\lambda, \phi,\psi)$.

    \paragraph{Part (2) Risk equivalence.}
    From \Cref{lem:v-equiv-path}, for any $\bar{\psi}\in[\phi,+\infty]$, there exists a unique $\bar{\lambda}\geq \lambda_{\min}(\phi)$ such that
    \begin{equation}
        \frac{1}{v} = \bar{\lambda} + \phi\int\frac{r}{1 + v r} \rd H_p(r),
        \quad
        \text{and}
        \quad
         \frac{1}{v}  = \bar{\psi}\int\frac{r}{1 + v r}\rd H_p(r). \label{eq:lambda_bar}
    \end{equation}
    For any pair of $(\lambda_1, \psi_1)$ and $(\lambda_2, \psi_2)$ on the path
    \smash{$\cP(\bar{\lambda}; \phi, \bar{\psi})$} as defined in \eqref{eq:path}, we have that:
    \[\sR_p(\lambda_1;\phi,\psi_1) = \sR_p(\lambda_2;\phi,\psi_2). \]

    \paragraph{Part (3) Optimal risk.}
    From \eqref{eq:risk-equiv}, we have that for any $\lambda\geq \lambda_{\min}(\phi)$, there exists $\psi\geq 1$ such that $| \sR_p(\lambda_{\min}(\phi);\phi,\psi) - \sR_p(\lambda, \phi,\phi)| \asto 0$.
    From \Cref{lem:v0} and \Cref{lem:ridge-fixed-point-v-properties}, $1/v_p(-\lambda                       ;\psi) \in [-r_{\min},\infty]$ and 
    \[\lim_{\psi\rightarrow \phi}\sR_p(\lambda_{\min}(\phi);\phi,\psi) = \lim_{\lambda\rightarrow \lambda_{\min}(\phi)^+}\sR_p(\lambda, \phi,\phi) = +\infty.\]
    Similar to the proof of Lemma 28 from \citet{bellec2023corrected}, one can show that the sequence of functions $\{\sR_p(\lambda_{\min}(\phi);\phi,\psi(\lambda)) - \sR_p(\lambda, \phi,\phi)\}_{p\in\NN}$ is uniformly equicontinuous on $\lambda\in\Lambda=[\lambda_{\min}(\phi)+\epsilon,\infty]$ almost surely for some small $\epsilon>0$ such that $\sR_p(\lambda, \phi,\psi)$ is no larger than the null risk $\sR_p(+\infty;\phi,\phi)$ when $\lambda\in[\lambda_{\min}(\phi)+\epsilon,+\infty]$.
    From Theorem 21.8 of \citet{davidson1994stochastic}, it further follows that the sequences converge to zero uniformly over $\Lambda$ almost surely.
    This implies that
    \begin{align*}
        0 & = \limsup_{p} \max_{\lambda\geq\lambda_{\min}(\phi)+\epsilon}\left[ \sR_p(\lambda_{\min}(\phi);\phi,\psi(\lambda)) - \sR_p(\lambda, \phi,\phi) \right]\\
        & \geq \min_{\psi\geq\phi} \limsup_{p} \max_{\lambda\geq\lambda_{\min}(\phi)+\epsilon}\left[ \sR_p(\lambda_{\min}(\phi);\phi,\psi) - \sR_p(\lambda, \phi,\phi) \right]\\
        &\geq \limsup_{p} \min_{\psi\geq\phi}\max_{\lambda\geq\lambda_{\min}(\phi)+\epsilon}\left[ \sR_p(\lambda_{\min}(\phi);\phi,\psi) - \sR_p(\lambda, \phi,\phi) \right] \\
        &=\limsup_{p} \left[\min_{\psi\geq\phi} \sR_p(\lambda_{\min}(\phi);\phi,\psi) - \min_{\lambda\geq\lambda_{\min}(\phi)+\epsilon}\sR_p(\lambda, \phi,\phi) \right].
    \end{align*}
    
    Conversely, since for any $\psi\geq \psi(\lambda_{\min}(\phi)+\epsilon)$, there exists $\lambda\geq \lambda_{\min}(\phi)+\epsilon$ such that $| \sR_p(\lambda_{\min}(\phi);\phi,\psi) - \sR_p(\lambda(\psi);\phi,\phi)| \asto 0$.
    Similarly, we can show that $\{\sR_p(\lambda_{\min}(\phi);\phi,\psi) - \sR_p(\lambda(\psi);\phi,\phi)\}_{p\in\NN}$ is uniformly equicontinuous on $\psi\in\Psi=[\psi(\lambda_{\min}(\phi)+\epsilon),\infty]$ almost surely.
    This also implies that
    \begin{align*}
        0 &= \liminf_{p} \min_{\psi\geq\psi(\lambda_{\min}(\phi)+\epsilon)}[\sR_p(\lambda_{\min}(\phi);\phi,\psi) - \sR_p(\lambda(\psi);\phi,\phi)]\\
        &\leq \max_{\lambda\geq\lambda_{\min}(\phi)} \liminf_{p} \min_{\psi\geq\psi(\lambda_{\min}(\phi)+\epsilon)}[\sR_p(\lambda_{\min}(\phi);\phi,\psi) - \sR_p(\lambda, \phi,\phi) ] \\
        &\leq \liminf_{p} \max_{\lambda\geq\lambda_{\min}(\phi)} \min_{\psi\geq\psi(\lambda_{\min}(\phi)+\epsilon)} [\sR_p(\lambda_{\min}(\phi);\phi,\psi) - \sR_p(\lambda, \phi,\phi) ] \\
        &= \liminf_{p} \left[\min_{\psi\geq\psi(\lambda_{\min}(\phi)+\epsilon)} \sR_p(\lambda_{\min}(\phi);\phi,\psi) - \min_{\lambda\geq\lambda_{\min}(\phi)}\sR_p(\lambda, \phi,\phi)\right].
    \end{align*}
    Combining the previous two inequalities implies that
    \begin{align*}
        \min_{\lambda\geq\lambda_{\min}(\phi)+\epsilon}\sR_p(\lambda, \phi,\phi) \asympequi \min_{\psi\geq\psi(\lambda_{\min}(\phi)+\epsilon)} \sR_p(\lambda_{\min}(\phi);\phi,\psi) .
    \end{align*}
    Since
    \begin{align*}
         \min_{\lambda\geq\lambda_{\min}(\phi)+\epsilon}\sR_p(\lambda, \phi,\phi)  &= \min_{\lambda\geq\lambda_{\min}(\phi)}\sR_p(\lambda, \phi,\phi) \\
        \min_{\psi\geq\psi(\lambda_{\min}(\phi)+\epsilon)} \sR_p(\lambda_{\min}(\phi);\phi,\psi) &= \min_{\psi\geq\phi} \sR_p(\lambda_{\min}(\phi);\phi,\psi),
    \end{align*}
    we further have
    \begin{align*}
        \min_{\lambda\geq\lambda_{\min}(\phi)}\sR_p(\lambda, \phi,\phi) = \min_{\psi\geq\phi} \sR_p(\lambda_{\min}(\phi);\phi,\psi),
    \end{align*}
    which holds for $\phi\in(0,\infty)$.
    This finishes the proof of the second conclusion.

    \paragraph{Part (4) Optimal risk when $\phi< 1$.}
    When $\bbeta_0=\bbeta$, the excess bias term $\sS\equiv0$.
    From \Cref{lem:ood-risk-deri}, we have that the bias component $\tc_p(\lambda, \phi,\phi)$ of the risk equivalent is minimized at $\lambda=0$ when $\phi<1$.
    Since $\tv_p(\lambda, \phi,\phi)$ is a strictly increasing function in $v_p(\lambda, \phi)$ and $v_p(\lambda, \phi)$ is a strictly decreasing function in $\lambda$, we see that $\tv_p(\lambda, \phi,\phi)$ is a strictly decreasing function in $\lambda$.
    Thus, we have that
    \[\min_{\lambda\in[\lambda_{\min},0]}\sR_p(\lambda, \phi,\phi) \geq \sR_p(0;\phi,\phi).\]
    This implies that
    \[\min_{\lambda\geq\lambda_{\min}}\sR_p(\lambda, \phi,\phi) \geq \min_{\lambda\geq 0}\sR_p(0;\phi,\phi),\]
    which finishes the proof of the first conclusion.
\end{proof}

\subsection{Helper Lemmas}
\label{sec:thm:negative-lam-proof-helper-lemmas}

\begin{lemma}[Monotonicity of generalized prediction risk with optimal ridge regularization]\label{thm:monotonicty-gen}
    Suppose \Cref{asm:train-test} hold.
    Define the generalized mean squared~risk for a estimator $\hat{\bbeta}$ as:
    \begin{equation}
        \label{eq:generalized-risk}
        R(\hbeta;\bA, \bb, \bbeta_0) = 
        \|L_{\bA,\bb}(\hbeta - \bbeta_0)\|_2^2,
    \end{equation}
    where \smash{$L_{\bA,\bb}(\bbeta)=\bA\bbeta + \bb$} is a linear functional and \smash{$(\bA,\bb)$} is independent of \smash{$(\bX,\by)$} such that \smash{$\|\bA\|_{\oper}$} and \smash{$\|\bb\|_2$} are almost surely bounded.   
    Then, as $k,n,p\rightarrow\infty$ such that $p/n\rightarrow\phi\in(0,\infty)$ and $p/k\rightarrow\psi\in[\phi,\infty]$, there exists a sequence of random variables $\{\sQ_p(\lambda, \phi,\psi)\}_{p=1}^{\infty}$ that is asymptotically equivalent to the risk of the full-ensemble ridge predictor, 
    \begin{align}
        R(\hbeta^{\lambda}_{k,\infty};\bA, \bb, \bbeta_0) \asympequi \sQ_p(\lambda, \phi,\psi):= \tc_p(\lambda, \phi,\psi,\bA^{\top}\bA)  + \|\fNL\|_{L_2}^2\tv_p(\lambda, \phi,\psi,\bA^{\top}\bA),  \label{eq:R_p}
    \end{align}
    where the non-negative constants $\tc_p(\lambda, \phi,\psi,\bA^{\top}\bA)$ and $\tv_p(\lambda, \phi,\psi,\bA^{\top}\bA)$ are defined through the following equations:
    \begin{align*}
        \frac{1}{v_p(\lambda, \psi)} &= \lambda+\psi \int\frac{r}{1+v_p(\lambda, \psi)r }\rd H_p(r),\\
        \tv_p(\lambda, \phi,\psi,\bA^{\top}\bA) &= \ddfrac{\phi \otr[\bA^{\top}\bA\bSigma(v_p(\lambda, \psi)\bSigma+\bI)^{-2}]}{v_p(\lambda, \psi)^{-2}-\phi \int\frac{r^2}{(1+v_p(\lambda, \psi)r)^2}\rd H_p(r)},\\
        \tc_p(\lambda, \phi,\psi,\bA^{\top}\bA) &=
        \bbeta_0^{\top}(v_p(\lambda, \psi)\bSigma+\bI)^{-1}(\tv_p(\lambda, \phi,\psi,\bA^{\top}\bA)\bSigma+\bA^{\top}\bA) (v_p(\lambda, \psi)\bSigma+\bI)^{-1}\bbeta_0.
    \end{align*}
    For the ridge predictor when $k=n$ and $\lambda_{\min}(\phi)$ defined in \eqref{eq:lam_min}, the optimal risk equivalence $\min_{\lambda\geq \lambda_{\min}(\phi)}\sQ_p(\lambda, \phi,\phi)$ is monotonically increasing in $\phi$.
\end{lemma}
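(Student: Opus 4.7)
The plan is to establish the deterministic-equivalent statement first, and then extract the monotonicity of the optimal risk via the subsampling-ridge equivalence developed in \Cref{sec:proof_through_subsample_ridge_equivalences}. For the equivalent part, I would begin from the decomposition $y = \bx^{\top}\bbeta + \varepsilon$ afforded by the $L_2$ projection (so that $\varepsilon$ is uncorrelated with $\bx$ and has variance $\sigma^2 = \|\fNL\|_{L_2}^2$), together with the almost sure identity $\hbeta^{\lambda}_{k,\infty} = \EE_{\cI}[\hbeta^{\lambda}_{k}(\cI)\mid \bX,\by]$ for the full-ensemble estimator. Expanding $\|L_{\bA,\bb}(\hbeta^{\lambda}_{k,\infty} - \bbeta_0)\|_2^2$ and conditioning on $\bX$ produces a pure bias term, a noise-variance term carrying $\|\fNL\|_{L_2}^2$, and a cross term whose deterministic equivalent vanishes by \citet[Lemma D.2]{patil2023generalized} applied to $\bvarepsilon$ and $\bX$.

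Next I would evaluate the two remaining pieces via resolvent deterministic equivalents. The bias reduces to quantities of the form $\EE_{\cI}[(\hSigma_{\cI} + \lambda \bI)^{\dagger}\bSigma \bA^{\top}\bA\bSigma (\hSigma_{\cI} + \lambda \bI)^{\dagger}]$ sandwiched by $\bbeta_0$, and the variance to double-resolvent quantities of the form $\EE_{\cI,\cI'}[(\hSigma_{\cI} + \lambda \bI)^{\dagger} \bX^{\top}\bL_{\cI \cap \cI'} \bX/n^2 (\hSigma_{\cI'} + \lambda \bI)^{\dagger}]$. For $\lambda \geq 0$ these are exactly the equivalents identified in \citet{du2023gcv, patil2022bagging, patil2023generalized}, yielding precisely the fixed-point quantities $v_p(\lambda, \psi)$, $\tv_p$ and $\tc_p$ in terms of $\psi = p/k$ (the subsample aspect ratio controls the inner resolvent, while the outer $\phi = p/n$ enters through the trace over all $n$ samples). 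The new content is the extension to $\lambda \in (\lambda_{\min}(\phi), 0)$: I would use the analytic-continuation framework of \Cref{sec:analytic-properties-fp-sols} (building on \cite{lejeune2022asymptotics}) to identify $v_p$ as the unique analytic extension of the Stieltjes-type fixed point across the real axis down to $\lambda_{\min}$, and apply \Cref{lem:ridge-fixed-point-v-properties} to keep the denominator $v_p^{-2} - \phi\int r^2/(1+v_p r)^2 \rd H_p(r)$ strictly positive throughout. Concentration of the random resolvents down to this threshold is controlled by first conditioning on the high-probability event $\lambda > -s_{\min}(\hSigma)$ guaranteed by \Cref{def:lower_bound_negative_regularization}.

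For the monotonicity part (specializing to $k=n$, i.e., $\psi = \phi$), the plan is to invoke \Cref{lem:v-equiv-path}: every $(\lambda, \phi, \phi)$ with $\lambda \geq \lambda_{\min}(\phi)$ is risk-equivalent to some $(\lambda_{\min}(\phi), \phi, \psi)$ with $\psi \geq \phi$, so
\begin{equation*}
\min_{\lambda \geq \lambda_{\min}(\phi)} \sQ_p(\lambda, \phi, \phi) = \min_{\psi \geq \phi} \sQ_p(\lambda_{\min}(\phi), \phi, \psi).
\end{equation*}
Monotonicity in $\phi$ then follows from two observations: (i) for any fixed $(\lambda, \psi)$, $\sQ_p(\lambda, \phi, \psi)$ is nondecreasing in $\phi$, because $\phi$ enters the numerator of $\tv_p$ linearly while it appears with a minus sign in the denominator (which stays positive by \Cref{lem:ridge-fixed-point-v-properties}), and a parallel check handles $\tc_p$; (ii) the feasible set $\{\psi \geq \phi\}$ shrinks as $\phi$ grows. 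Chaining these gives $\min_{\psi \geq \phi_1}\sQ_p(\lambda_{\min}(\phi_1),\phi_1,\psi) \leq \min_{\psi \geq \phi_2}\sQ_p(\lambda_{\min}(\phi_1),\phi_1,\psi) \leq \min_{\psi \geq \phi_2}\sQ_p(\lambda_{\min}(\phi_2),\phi_2,\psi)$ for $\phi_1 \leq \phi_2$, up to the fact that $\lambda_{\min}(\phi)$ itself depends on $\phi$, which I would absorb by noting that the value along the equivalence path is unchanged.

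The main obstacle will be the extension down to the threshold $\lambda_{\min}(\phi)$: both the random resolvent and its deterministic approximation become singular there, so one needs uniform control of $(\hSigma_\cI + \lambda \bI)^{\dagger}$ on an $\epsilon$-margin inside $(\lambda_{\min}(\phi), \infty)$, simultaneously for all subsamples $\cI$. This is precisely what the sharp smallest-nonzero-eigenvalue bound of \cite{lejeune2022asymptotics} provides, and once it is in place, propagating the standard anisotropic local laws through the ensemble averaging and analytic continuation is bookkeeping. A secondary subtlety is justifying the swap of $\min$ and $\asympequi$ needed to pass from pointwise-in-$\lambda$ deterministic equivalence to optimal-risk equivalence, which I would handle via the uniform equicontinuity argument outlined in the proof of \Cref{thm:negative-lam} (adapting \cite[Lemma 28]{bellec2023corrected}).
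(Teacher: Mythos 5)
Your proposal follows the paper's own argument essentially step for step: the deterministic-equivalent characterization from Patil, Du, and coauthors for nonnegative $\lambda$, the analytic-continuation extension to negative $\lambda$ built on LeJeune et al.\ and the fixed-point lemmas of \Cref{sec:analytic-properties-fp-sols}, the reduction via the subsampling--ridge equivalence path of \Cref{lem:v-equiv-path}, the chaining argument showing $\min_{\psi \ge \phi}\sQ_p(\lambda,\phi,\psi)$ is monotone in $\phi$, and the uniform-equicontinuity device adapted from Bellec et al.\ to commute the minimization over $\lambda$ with the asymptotic equivalence. The reasoning is correct and matches the paper's proof in both structure and ingredients, with only cosmetic differences in how the resolvent objects and cross-term cancellation are unpacked.
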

\begin{proof}
    Given an observation $(\bx, y)$, recall the decomposition $y = \fLI(\bx) + \fNL(\bx)$ explained in \Cref{sec:preliminaries}.
    For $n$ i.i.d.\ samples from the same distribution as $(\bx,y)$, we define analogously the vector decomposition:
    \begin{align}
        \label{eq:li-nl-decomposition}
        \by = \bfLI +\bfNL,
    \end{align}
    where $\bfLI=\bX\bbeta_0$ and $\bfNL = [\fNL(\bx_i)]_{i\in[n]}$.

    Note that
    \begin{align*}
        R(\hbeta_{k,\infty}^{\lambda};\bA,\bb, \bbeta_0)
        &= R(\hbeta_{k,\infty}^{\lambda};\bA,\zero, \bbeta_0) +  2\bb^{\top}\bA(\hbeta_{k,\infty}^{\lambda} - \bbeta_0) + \|\bb\|_2^2 .
    \end{align*}
    By Theorem 3 of \citet{patil2023generalized}, the cross term vanishes, that is, $\bb^{\top}\bA(\hbeta_{k,\infty}^{\lambda} - \bbeta_0)\asto 0$.
    We then have
    \begin{align*}
        |R(\hbeta_{k_1,\infty}^{\lambda_1};\bA, \bb, \bbeta_0)  - R(\hbeta_{k_2,\infty}^{\lambda_2};\bA, \bb, \bbeta_0)|  &\asto  |R(\hbeta_{k_1,\infty}^{\lambda_1};\bA, \zero, \bbeta_0)  - R(\hbeta_{k_2,\infty}^{\lambda_2};\bA, \zero, \bbeta_0)|.
    \end{align*}
    Thus, it suffices to analyze $R(\hbeta_{k,\infty}^{\lambda};\bA,\zero, \bbeta_0)$.
    To simplify the notation, we define 
    \begin{align}
        R_p(\lambda, \phi,\psi) := R\big(\hbeta_{\lfloor p/\psi\rfloor,\infty}^{\lambda_1};\bA, \zero_p, \bbeta_0\big) \label{eq:risk-det}
    \end{align}
    to indicate the dependency solely on $p$ and $(\lambda,\phi,\psi)$.
    We split the proof into different parts.

    \paragraph{Part (1) Risk characterization.}
    Under \Cref{asm:train-test}, from Equation (11) of \citet{patil2023generalized}, we have that for $\lambda\geq 0$,
    \[R(\hbeta^{\lambda}_{k,\infty};\bA, \bb, \bbeta_0) \asympequi \sQ_p(\lambda, \phi,\psi)\]
    where $\sQ_p$ is defined in \eqref{eq:R_p}.
    Note that for $\lambda\in[\lambda_{\min},0)$, the fixed-point solution $v_p(\lambda, \psi)$ satisfies the same fixed-point equation as the one for $\lambda\geq 0$.
    Since the above deterministic equivalent depends on $\lambda$ only through $v_p(\lambda, \psi)$, it also applies to $\lambda\in[\lambda_{\min},0)$.
    
    \paragraph{Part (2) Risk equivalence.}
    From \Cref{lem:v-equiv-path}, we have that, for any $\bar{\psi}\in[\phi,+\infty]$, there exists $\bar{\lambda}$ uniquely defined through \eqref{eq:lambda_bar}.
    For any pair of $(\lambda_1, \psi_1)$ and $(\lambda_2, \psi_2)$ on the path
    \smash{$\cP(\bar{\lambda}; \phi, \bar{\psi})$} as defined in \eqref{eq:path}, the generalized risk functionals \eqref{eq:generalized-risk} of the full-ensemble estimator are asymptotically equivalent:
    \begin{equation}\label{eq:risk-equiv}
        R_p(\lambda_1;\phi,\psi_1) \asympequi \sQ_p(\lambda_1;\phi,\psi_1) = \sQ_p(\lambda_2;\phi,\psi_2) \asympequi R_p(\lambda_2;\phi,\psi_2).
    \end{equation}
    
    \paragraph{Part (3) Risk monotonicity.}
    Note that from Lemma F.10 (3) and Lemma F.11 (3) \citet{du2023gcv}, $v_p(\lambda, \psi)^{-2}-\phi \int r^2 (1+v_p(\lambda, \psi)r)^{-2}\rd H_p(r)$ is non-negative.
    Then we have
    \begin{align*}
        \tv_p(\lambda, \phi,\psi,\bA^{\top}\bA) &= \ddfrac{\otr[\bA^{\top}\bA\bSigma(v_p(\lambda, \psi)\bSigma+\bI)^{-2}]}{\phi^{-1}v_p(\lambda, \psi)^{-2}- \int\frac{r^2}{(1+v_p(\lambda, \psi)r)^2}\rd H_p(r)}
    \end{align*}
    is increasing in $\phi\in(0,\psi]$ for any fixed $\psi$.
    Thus, $\sQ_p(\lambda, \phi,\psi)$ is increasing in $\phi$ for any fixed $(\lambda,\psi)$.
    Furthermore, since $\sQ_p(\lambda, \phi,\psi)$ is a continuous function of $\phi$ and $v(\lambda;\psi)$, it follows that for $0<\phi_1\leq \phi_2<\infty$,
    \begin{align*}
        \min_{\psi \ge \phi_1}
        \sQ_p(\lambda, \phi_1,\psi)
        &\le
        \min_{\psi \ge \phi_2}
        \sQ_p(\lambda, \phi_1,\psi) \le
        \min_{\psi \ge \phi_2}
        \sQ_p(\lambda, \phi_2,\psi),
    \end{align*}
    where the first inequality follows because $\{ \psi: \psi \ge \phi_1 \} \supseteq \{ \psi : \psi \ge \phi_2 \}$, and the second inequality follows because $\sQ_p(\lambda, \phi,\psi)$ is increasing in $\phi$ for a fixed $\psi$.
    Thus, $\min_{\psi \ge \phi}\sQ_p(\lambda, \phi,\psi)$ is a continuous and monotonically increasing function in $\phi$.

    \paragraph{Part (4) Optimal subsampling and regularization.}
    From \eqref{eq:risk-equiv}, we have that for any $\lambda\geq \lambda_{\min}(\phi)$, there exists $\psi\geq 1$ such that $| \sQ_p(\lambda_{\min}(\phi);\phi,\psi) - R_p(\lambda, \phi,\phi)| \asto 0$.
    From \Cref{lem:v0} and \Cref{lem:ridge-fixed-point-v-properties}, $1/v_p(-\lambda;\psi) \in [-r_{\min},\infty]$ and $\lim_{\psi\rightarrow \phi}\sQ_p(\lambda_{\min}(\phi);\phi,\psi) = +\infty$.
    Similarly to the proof of Lemma 28 from \citet{bellec2023corrected}, one can show that the sequence of functions $\{\sQ_p(\lambda_{\min}(\phi);\phi,\psi(\lambda)) - R_p(\lambda, \phi,\phi)\}_{p\in\NN}$ is uniformly equicontinuous on $\lambda\in\Lambda=[\lambda_{\min}(\phi)+\epsilon,\infty]$ almost surely for some small $\epsilon>0$ such that $|\sQ_p(\lambda_{\min}(\phi);\phi,\psi(\lambda))|$ is not greater than the null risk $\sQ_p(+\infty;\phi,\phi)$ when $\lambda\in[\lambda_{\min}(\phi)+\epsilon,+\infty]$.
    From Theorem 21.8 of \citet{davidson1994stochastic}, it further follows that the sequences converge to zero uniformly over $\Lambda$ almost surely.
    This implies that
    \begin{align*}
        0 & = \limsup_{p} \max_{\lambda\geq\lambda_{\min}(\phi)+\epsilon}\left[ \sQ_p(\lambda_{\min}(\phi);\phi,\psi(\lambda)) - R_p(\lambda, \phi,\phi) \right] \\
        & \geq \min_{\psi\geq\phi} \limsup_{p} \max_{\lambda\geq\lambda_{\min}(\phi)+\epsilon}\left[ \sQ_p(\lambda_{\min}(\phi);\phi,\psi) - R_p(\lambda, \phi,\phi) \right] \\
        &\geq \limsup_{p} \min_{\psi\geq\phi}\max_{\lambda\geq\lambda_{\min}(\phi)+\epsilon}\left[ \sQ_p(\lambda_{\min}(\phi);\phi,\psi) - R_p(\lambda, \phi,\phi) \right] \\
        &=\limsup_{p} \left[\min_{\psi\geq\phi} \sQ_p(\lambda_{\min}(\phi);\phi,\psi) - \min_{\lambda\geq\lambda_{\min}(\phi)+\epsilon}R_p(\lambda, \phi,\phi) \right].
    \end{align*}
    
    Conversely, since for any $\psi\geq \psi(\lambda_{\min}(\phi)+\epsilon)$, there exists $\lambda\geq \lambda_{\min}(\phi)+\epsilon$ such that $| \sQ_p(\lambda_{\min}(\phi);\phi,\psi) - R_p(\lambda(\psi);\phi,\phi)| \asto 0$.
    Similarly, we can show that $\{\sQ_p(\lambda_{\min}(\phi);\phi,\psi) - R_p(\lambda(\psi);\phi,\phi)\}_{p\in\NN}$ is uniformly equicontinuous on $\psi\in\Psi=[\psi(\lambda_{\min}(\phi)+\epsilon),\infty]$ almost surely.
    This also implies that
    \begin{align*}
        0 &= \liminf_{p} \min_{\psi\geq\psi(\lambda_{\min}(\phi)+\epsilon)}[\sQ_p(\lambda_{\min}(\phi);\phi,\psi) - R_p(\lambda(\psi);\phi,\phi)]\\
        &\leq \max_{\lambda\geq\lambda_{\min}(\phi)} \liminf_{p} \min_{\psi\geq\psi(\lambda_{\min}(\phi)+\epsilon)}[\sQ_p(\lambda_{\min}(\phi);\phi,\psi) - R_p(\lambda, \phi,\phi) ] \\
        &\leq \liminf_{p} \max_{\lambda\geq\lambda_{\min}(\phi)} \min_{\psi\geq\psi(\lambda_{\min}(\phi)+\epsilon)} [\sQ_p(\lambda_{\min}(\phi);\phi,\psi) - R_p(\lambda, \phi,\phi) ] \\
        &= \liminf_{p} \left[\min_{\psi\geq\psi(\lambda_{\min}(\phi)+\epsilon)} \sQ_p(\lambda_{\min}(\phi);\phi,\psi) - \min_{\lambda\geq\lambda_{\min}(\phi)}R_p(\lambda, \phi,\phi)\right].
    \end{align*}
    Combining the previous two inequalities implies that
    \begin{align*}
        \min_{\lambda\geq\lambda_{\min}(\phi)+\epsilon}R_p(\lambda, \phi,\phi) \asympequi \min_{\psi\geq\psi(\lambda_{\min}(\phi)+\epsilon)} \sQ_p(\lambda_{\min}(\phi);\phi,\psi) .
    \end{align*}
    Since
    \begin{align*}
        \min_{\psi\geq\psi(\lambda_{\min}(\phi)+\epsilon)} \sQ_p(\lambda_{\min}(\phi);\phi,\psi) &= \min_{\psi\geq\psi(\lambda_{\min}(\phi))} \sQ_p(\lambda_{\min}(\phi);\phi,\psi) = \min_{\psi\geq\phi} \sQ_p(\lambda_{\min}(\phi);\phi,\psi),
    \end{align*}
    we further have
    \begin{align*}
        \min_{\lambda\geq\lambda_{\min}(\phi)+\epsilon}R_p(\lambda, \phi,\phi) \asympequi \min_{\psi\geq\phi} \sQ_p(\lambda_{\min}(\phi);\phi,\psi).
    \end{align*}
    From the second part, we know that the latter is continuous and monotonically increasing in $\phi$, which finishes the proof.        
\end{proof}   

\begin{lemma}[Out-of-distribution full-ensemble risk asymptotics]\label{lem:ood-risk-asymptotics-ensemble}
    Under \Cref{asm:train-test}, as $k,n,p\rightarrow\infty$ such that $p/n\rightarrow\phi\in(0,\infty)$ and $p/k\rightarrow\psi\in[\phi,\infty]$, for $\lambda\geq \lambda_{\min}(\phi)$, it holds that
    \begin{align}
        R(\hat{\bbeta}^{\lambda}_{k,\infty}) &\asympequi \sR_p(\lambda, \phi,\psi) \notag\\
        &:=\sQ_p(\lambda, \phi,\psi)  - 2\bbeta^\top (v_p(\lambda, \phi)\bSigma + \bI)^{-1} \bSigma_0 (\bbeta - {\bbeta}_0) + 
        [(\bbeta_0 - \bbeta)^\top \bSigma_0 (\bbeta_0 - {\bbeta}) 
        + \sigma_0^2] .  \label{eq:risk-det-equiv-ensemble}
    \end{align}
    where
    \begin{align*}
        \sQ_p(\lambda, \phi,\psi):= \tc_p(\lambda, \phi,\psi,\bSigma_0)  + \|\fNL\|_{L_2}^2\tv_p(\lambda, \phi,\psi,\bSigma_0),
    \end{align*}
    where the non-negative constants $\tc_p(\lambda, \phi,\psi,\bSigma_0)$ and $\tv_p(\lambda, \phi,\psi,\bSigma_0)$ are defined through the following equations:
    \begin{align*}
        \frac{1}{v_p(\lambda, \psi)} &= \lambda+\psi \int\frac{r}{1+v_p(\lambda, \psi)r }\rd H_p(r),\\
        \tv_p(\lambda, \phi,\psi,\bSigma_0) &= \ddfrac{\phi \otr[\bSigma_0\bSigma(v_p(\lambda, \psi)\bSigma+\bI)^{-2}]}{v_p(\lambda, \psi)^{-2}-\phi \int\frac{r^2}{(1+v_p(\lambda, \psi)r)^2}\rd H_p(r)},\\
        \tc_p(\lambda, \phi,\psi,\bSigma_0) &=
        \bbeta^{\top}(v_p(\lambda, \psi)\bSigma+\bI)^{-1}(\tv_p(\lambda, \phi,\psi,\bSigma_0)\bSigma+\bSigma_0) (v_p(\lambda, \psi)\bSigma+\bI)^{-1}\bbeta .
    \end{align*}
\end{lemma}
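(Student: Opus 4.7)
The plan is to mirror the proof of \Cref{prop:ood-risk-asymptotics}, replacing the single-ridge equivalence with the full-ensemble equivalence from \Cref{thm:monotonicty-gen}. Starting from $R(\hat{\bbeta}^\lambda_{k,\infty}) = (\hat{\bbeta}^\lambda_{k,\infty} - \bbeta_0)^\top \bSigma_0 (\hat{\bbeta}^\lambda_{k,\infty} - \bbeta_0) + \sigma_0^2$, I would add and subtract the train projection $\bbeta$ to obtain
\begin{align*}
    R(\hat{\bbeta}^\lambda_{k,\infty})
    = \underbrace{(\hat{\bbeta}^\lambda_{k,\infty} - \bbeta)^\top \bSigma_0 (\hat{\bbeta}^\lambda_{k,\infty} - \bbeta)}_{\text{(A)}}
    + \underbrace{2 (\hat{\bbeta}^\lambda_{k,\infty} - \bbeta)^\top \bSigma_0 (\bbeta - \bbeta_0)}_{\text{(B)}}
    + \underbrace{(\bbeta_0 - \bbeta)^\top \bSigma_0 (\bbeta_0 - \bbeta) + \sigma_0^2}_{\text{(C)}}.
\end{align*}
Piece (C) is deterministic and contributes as stated. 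Piece (A) is exactly the generalized risk \eqref{eq:generalized-risk} at $\bA = \bSigma_0^{1/2}$, $\bb = \bm{0}$, and true signal $\bbeta$, so \Cref{thm:monotonicty-gen} supplies its equivalent as $\tc_p(\lambda,\phi,\psi,\bSigma_0) + \|\fNL\|_{L_2}^2\, \tv_p(\lambda,\phi,\psi,\bSigma_0) = \sQ_p(\lambda,\phi,\psi)$.

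The real work is in the cross term (B). Using $\by = \bX\bbeta + \bfNL$ and expanding each per-sketch estimator gives
\begin{equation*}
    \hat{\bbeta}^\lambda_k(\cI) - \bbeta
    = -\lambda \bigl(\bX^\top \bL_\cI \bX / k + \lambda \bI\bigr)^{\dagger} \bbeta
    + \bigl(\bX^\top \bL_\cI \bX / k + \lambda \bI\bigr)^{\dagger} \bX^\top \bL_\cI \bfNL / k,
\end{equation*}
and averaging over $\cI$ yields the analogous decomposition of $\hat{\bbeta}^\lambda_{k,\infty} - \bbeta$ into a signal piece and a noise piece. Pairing with the bounded, deterministic vector $\bSigma_0 (\bbeta - \bbeta_0)$, the noise piece is linear in $\bfNL$ (which is zero-mean and uncorrelated with $\bX$), so it vanishes almost surely by the concentration argument in \citet[Lemma D.2]{patil2023generalized}. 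The signal piece reduces to the bilinear form $-\lambda\, \bbeta^\top \EE_\cI\bigl[(\bX^\top \bL_\cI \bX / k + \lambda \bI)^{\dagger}\bigr] \bSigma_0 (\bbeta - \bbeta_0)$, whose matrix-valued anisotropic equivalent at the per-sketch aspect ratio $p/k \to \psi$ is $-\bbeta^\top (v_p(\lambda,\psi)\bSigma + \bI)^{-1} \bSigma_0 (\bbeta - \bbeta_0)$, reproducing (with the factor $2$ from (B)) the cross term in the statement.

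The main obstacle is the final bilinear equivalence in (B): I need a matrix-valued deterministic equivalent of the form $\lambda\, \EE_\cI[(\bX^\top \bL_\cI \bX / k + \lambda \bI)^{\dagger}] \asympequi (v_p(\lambda,\psi)\bSigma + \bI)^{-1}$ tested against arbitrary fixed vectors, rather than merely its trace. For the plain ridge case ($\psi = \phi$) this follows from the anisotropic local laws of \citet{knowles2017anisotropic}; for the full ensemble I would combine this with the sketched-pseudoinverse framework of \citet{lejeune2022asymptotics} at aspect ratio $\psi$, and then extend the equivalence to the negative range $\lambda \ge \lambda_{\min}(\phi)$ using the analytic-continuation properties of the fixed-point solution $v_p$ developed in \Cref{sec:analytic-properties-fp-sols}. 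Once this bilinear equivalence is in hand for all admissible $\lambda$, reassembling (A), (B), and (C) yields the advertised $\sR_p(\lambda,\phi,\psi)$.
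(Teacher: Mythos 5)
Your proposal matches the paper's proof in all essentials: the same three-way decomposition of $R(\hat{\bbeta}^{\lambda}_{k,\infty})$ around the train projection $\bbeta$, the same application of \Cref{thm:monotonicty-gen} (with $\bA = \bSigma_0^{1/2}$, $\bb = \bm{0}$, reference vector $\bbeta$) to the quadratic piece, and the same treatment of the cross term by splitting $\hat{\bbeta}^{\lambda}_{k}(\cI) - \bbeta$ into a $\bfNL$-linear noise part (annihilated via \citet[Lemma D.2]{patil2023generalized}) and a deterministic signal part. The only divergence is in how the bilinear deterministic equivalent $\lambda\,\EE_{\cI}[(\bX^\top\bL_{\cI}\bX/k + \lambda\bI)^{\dagger}] \asympequi (v_p(\lambda,\psi)\bSigma + \bI)^{-1}$ for the signal part is supplied: the paper simply cites the ready-made full-ensemble bilinear resolvent equivalent in \citet[Lemma D.1\,(1)]{patil2023generalized} and then observes (as in \Cref{thm:monotonicty-gen}) that the equivalent depends on $\lambda$ only through $v_p$, so it extends to $\lambda \ge \lambda_{\min}(\phi)$; you instead propose assembling it from anisotropic local laws plus the sketched-pseudoinverse framework of \citet{lejeune2022asymptotics} and then analytic continuation via \Cref{sec:analytic-properties-fp-sols}. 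Both routes are valid; the paper's citation is shorter, while yours is more self-contained and makes explicit that the equivalence needs to hold against arbitrary fixed test vectors rather than just in trace.
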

\begin{proof}
    Similar to the proof of \Cref{prop:ood-risk-asymptotics}, we have the decomposition
    \begin{align*}
        R(\hat{\bbeta}_{k,\infty}^{\lambda}) &= (\hbeta_{k,\infty}^\lambda - \bbeta_0)^\top \bSigma_0 (\hbeta_{k,\infty}^\lambda - {\bbeta}_0)
        + \sigma_0^2 \\
        &= (\hbeta_{k,\infty}^\lambda - \bbeta)^\top \bSigma_0 (\hbeta_{k,\infty}^\lambda - {\bbeta}) +
        2(\hbeta_{k,\infty}^\lambda - \bbeta)^\top \bSigma_0 (\bbeta - {\bbeta}_0) + 
        [(\bbeta_0 - \bbeta)^\top \bSigma_0 (\bbeta_0 - {\bbeta}) 
        + \sigma_0^2] \\
        &\asympequi \sQ_p(\lambda,\phi,\psi) + 2\EE_{I\sim\cI_k}[(\hbeta(I)- \bbeta)^\top \bSigma_0 (\bbeta - {\bbeta}_0)\mid \cD_n] +  [(\bbeta_0 - \bbeta)^\top \bSigma_0 (\bbeta_0 - {\bbeta}) 
        + \sigma_0^2]  \\
        &\asympequi \sQ_p(\lambda,\phi,\psi) - 2\bbeta^\top (v_p(\lambda, \psi)\bSigma + \bI)^{-1} \bSigma_0 (\bbeta - {\bbeta}_0) + [(\bbeta_0 - \bbeta)^\top \bSigma_0 (\bbeta_0 - {\bbeta}) 
        + \sigma_0^2],
    \end{align*}
    where the first asymptotic equivalent is from \Cref{thm:monotonicty-gen} and the second is from \citet[Lemma D.1 (1)]{patil2023generalized}.
    This finishes the proof.
\end{proof}

\section{Technical Lemmas}
\label{sec:analytic-properties-fp-sols}

\subsection{Fixed-Point Equations for Minimum Ridge Penalty}
\label{subsec:analytic-properties-fp-sols-min-lam}

Recall that under \Cref{asm:train-test}, the minimum ridge penalty $\lambda_{\min}=\lambda_{\min}(\phi)$ can be determined by the following equations:
    \begin{align*}
        1 &= \phi \int \left(\frac{v_0r}{1+v_0r}\right)^2 \rd P(r) ,\qquad -r_{\min} < v_0^{-1}\leq 0 \\
        \frac{1}{v_0} &= \lambda_{\min} + \phi \int \frac{r}{1+v_0r} \rd P(r),  
    \end{align*}
    or equivalently
    \begin{align*}
        1 &= \phi \int \left(\frac{r}{\mu_0+r}\right)^2 \rd P(r) ,\qquad -r_{\min} < \mu_0\leq 0 \\
        \mu_0 &= \lambda_{\min} + \phi \int \frac{\mu_0 r}{\mu_0+r} \rd P(r).
    \end{align*}
    We next analyze the properties of $\mu_0$ in $\phi$.
    
\begin{lemma}[Continuity properties with the minimum regularization parameter]\label{lem:v0}
    Let $a > 0$ and $b < \infty$ be real numbers.
    Let $P$ be a probability measure supported on $[a, b]$.
    Consider the function $v_0(\cdot) : \phi \mapsto \mu_0(\phi)$, over $(0, \infty)$, where $- a\leq \mu_0(\phi) \leq 0$ is the unique solution to the following fixed-point equation:
    \begin{align}
        1 &= \phi \int \left(\frac{r}{\mu_0(\phi)+r}\right)^2 \rd P(r) , \label{eq:v0}
    \end{align}
    Then the following properties hold:
    \begin{enumerate}[label={(\arabic*)},font=\normalfont]
        \item The range of $\mu_0(\phi)$ is $[-a,\infty)$.
        
        \item $\mu_0(\phi)$ is continuous and strictly increasing over $\phi\in(0,\infty)$.
        In addition, $\mu_0(\phi)$ has a root at $\phi=1$.

        \item $\lambda_{\min}(\phi) = \mu_0(\phi)(1 - \phi\int r/(\mu_0(\phi)+r) \rd P(r))$ is non-positive over $\phi\in(0,\infty)$ with zero obtained when $\phi=1$.
        Furthermore, it is strictly increasing over $\phi\in(0,1)$ and strictly decreasing over $\phi\in(1,\infty)$.
    \end{enumerate}
\end{lemma}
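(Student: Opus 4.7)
The proof will treat $\mu_0(\phi)$ as an implicit function determined by $F(\mu,\phi) := \phi \int (r/(\mu+r))^2 \rd P(r) - 1 = 0$ on the open interval $\mu \in (-a,\infty)$, and then push all conclusions through this viewpoint together with one fixed-point identity.

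\textbf{Part (1), existence, uniqueness, and range.} First I will show that for each fixed $\phi > 0$, the map $\mu \mapsto F(\mu,\phi)$ is continuous and strictly decreasing on $(-a,\infty)$, since the integrand $(r/(\mu+r))^2$ is strictly decreasing in $\mu$ for every $r \in [a,b] \subseteq (0,\infty)$. For the boundary limits, as $\mu \to \infty$, dominated convergence gives $F(\mu,\phi) \to -1$. As $\mu \to -a^+$, I will use monotone convergence together with the fact that $a$ lies in the support of $P$: for any $\epsilon > 0$, $P([a,a+\epsilon]) > 0$, so $\int (r/(\mu+r))^2 \rd P(r) \geq (a/(\mu+a+\epsilon))^2 P([a,a+\epsilon]) \to \infty$. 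By the intermediate value theorem there is a unique root $\mu_0(\phi)$, and varying $\phi$ traces out $(-a,\infty)$.

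\textbf{Part (2), continuity, monotonicity, and $\mu_0(1) = 0$.} Since $F$ is jointly $C^1$, $\partial_\mu F = -2\phi \int r^2/(\mu+r)^3 \rd P(r) < 0$ on $(-a,\infty)$, and $\partial_\phi F = \int (r/(\mu+r))^2 \rd P(r) > 0$, the implicit function theorem gives $\mu_0 \in C^1((0,\infty))$ with
\begin{equation*}
    \frac{\rd \mu_0}{\rd\phi}
    = -\frac{\partial_\phi F}{\partial_\mu F}
    > 0,
\end{equation*}
so $\mu_0$ is strictly increasing. Plugging $\mu = 0$ into $F$ gives $F(0,\phi) = \phi - 1$, which vanishes precisely at $\phi = 1$; by uniqueness, $\mu_0(1)=0$.

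\textbf{Part (3), sign and monotonicity of $\lambda_{\min}$.} The key algebraic identity is obtained by rewriting $r/(\mu_0+r) = 1 - \mu_0/(\mu_0+r)$ inside the fixed-point relation $1 = \phi \int (r/(\mu_0+r))^2 \rd P(r)$; this yields
\begin{equation*}
    1 - \phi \int \frac{r}{\mu_0 + r}\, \rd P(r) = -\mu_0 \phi \int \frac{r}{(\mu_0+r)^2}\, \rd P(r),
\end{equation*}
and hence $\lambda_{\min}(\phi) = -\mu_0(\phi)^2 \phi \int r/(\mu_0+r)^2\, \rd P(r) \leq 0$, with equality iff $\mu_0(\phi) = 0$, i.e.\ $\phi = 1$. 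For the monotonicity, view $\lambda_{\min}$ as a function of both $\phi$ and $\mu_0$ via $\Lambda(\phi,\mu) := \mu - \phi \int \mu r/(\mu+r)\, \rd P(r)$. A direct computation gives the envelope-type cancellation
\begin{equation*}
    \frac{\partial \Lambda}{\partial \mu} = 1 - \phi \int \frac{r^2}{(\mu+r)^2}\, \rd P(r),
\end{equation*}
which equals $0$ at $\mu = \mu_0(\phi)$ by the defining fixed-point equation. Therefore $\rd\lambda_{\min}/\rd\phi = \partial_\phi \Lambda(\phi,\mu_0(\phi)) = -\mu_0(\phi) \int r/(\mu_0+r)\, \rd P(r)$. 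Since the integral is strictly positive and $\mu_0(\phi)$ has the same sign as $\phi - 1$ by Part (2), we conclude strict increase on $(0,1)$ and strict decrease on $(1,\infty)$.

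\textbf{Main obstacle.} I expect the only subtle point to be the blow-up of $F$ as $\mu \to -a^+$, which requires invoking that $a$ is in the support of $P$ (weak moment/support assumptions in \Cref{def:dist} suffice). The two monotonicity statements are almost immediate once one notices the envelope cancellation in Part (3)---the partial derivative with respect to $\mu$ is exactly the fixed-point equation, so differentiating $\lambda_{\min}$ collapses to a single-term expression whose sign is dictated by that of $\mu_0$.
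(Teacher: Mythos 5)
Your argument is correct in substance and takes a genuinely cleaner route than the paper's, especially in Part (3). The paper writes $\lambda_{\min}=\mu_0\cdot f(\mu_0)$ with $f(x)=1-\phi\int r/(x+r)\,\rd P(r)$, establishes the sign of each factor separately, and then infers monotonicity of the product by ``combining the sign properties''---a step that is left rather implicit (and the paper's assertion that $f$ is strictly decreasing in $x$ is in fact backwards, since $\partial_x f>0$). Your envelope argument sidesteps all of this: because $\partial_\mu\Lambda(\phi,\mu)=1-\phi\int r^2/(\mu+r)^2\,\rd P(r)$ is exactly the fixed-point residual, it vanishes at $\mu=\mu_0(\phi)$, and $\rd\lambda_{\min}/\rd\phi$ collapses to the single term $-\mu_0(\phi)\int r/(\mu_0+r)\,\rd P(r)$, whose sign is immediate from $\mathrm{sign}(\mu_0)=\mathrm{sign}(\phi-1)$. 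Similarly, your Part (2) via the implicit function theorem is more transparent than the paper's continuous-inverse argument and conveniently delivers the $C^1$ smoothness that Part (3) tacitly uses.

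The one step that needs fixing is the blow-up claim in Part (1). For \emph{fixed} $\epsilon>0$, your lower bound $(a/(\mu+a+\epsilon))^2 P([a,a+\epsilon])$ tends to the finite number $a^2\epsilon^{-2}P([a,a+\epsilon])$ as $\mu\to -a^+$, so the ``$\to\infty$'' does not follow as written. By monotone convergence the actual limit is $\int (r/(r-a))^2\,\rd P(r)$, which is $+\infty$ if $P$ has an atom at $a$ or its density is bounded below near $a$, but can be finite when $P([a,a+\epsilon])=O(\epsilon^2)$---and in that case $\mu_0(\phi)$ would fail to exist for small $\phi>0$. The paper leans on the same implicit hypothesis (it qualifies the $\phi\to\infty$ endpoint with ``if $P(a)>0$'' and otherwise cites LeJeune et al.\ for existence), so you are not worse off, but it is worth flagging that the lemma as stated needs some control on the mass of $P$ near $a$.
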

\begin{proof}
    The existence of the solution $v_0(\phi) = 1/\mu_0(\phi)$ to the fixed-point equation 
    \[\frac{1}{\phi} = \int \left(\frac{v_0(\phi)r}{1+v_0(\phi)r}\right)^2 \rd P(r) \]
    follows from Theorem 3.1 of \citet{lejeune2022asymptotics}.
    Next, we split the proof into different parts.

    \paragraph{Part (1).}
    Define $h(x) = \int (xr)^2 / (1 +xr)^2 \rd P(r)$.

    When $\phi< 1$, $h(v_0(\phi)) = 1/\phi > 1$, which implies that $1/v_0(\phi)\in [-a,0)$.
    When $\phi=+\infty$, $1/v_0(\phi)=-a$ if $P(a)>0$.
    
    When $\phi\geq 1$, $h(v_0(\phi)) = 1/\phi \leq 1$, which implies that $v_0(\phi)\in(0,\infty]$, with infinity obtained when $\phi=1$, or equivalently, $1/v_0(\phi)\in[0,\infty)$.

    \paragraph{Part (2).}
    Since $g(t)=h(t^{-1})^{-1}$ is positive, strictly increasing and continuous over $t\in [-a, \infty)$, by the continuous inverse theorem, we have that $1/v_0(\phi)=g^{-1}(\phi)$ is strictly increasing and continuous over $\phi\in (0,\infty)$.
    From Part (1), we also have $1/v_0(1)=0$, which finishes the proof.

    \paragraph{Part (3).} 
    Consider the function $f(x)=1 - \phi\int r / (x+r)\rd P(r)$.
    When $\phi<1$, we know that $f(\mu_0(\phi))>0$ because from \eqref{eq:v0},
    \begin{align*}
        1 &= \phi \int \left(\frac{r}{\mu_0(\phi)+r}\right)^2 \rd P(r) > \phi \int \frac{r}{\mu_0(\phi)+r} \rd P(r) ,
    \end{align*}
    which holds because $\mu_0(\phi)<0$ from Part (2).
    Analogously, when $\phi>1$, $f(\mu_0(\phi))<0$.    
    Therefore, $\lambda_{\min}(\phi)=\mu_0(\phi) f(\mu_0(\phi))\leq0$ with equality obtained when $\phi=1$.
    
    Because $f(x)$ is strictly decreasing in both $\phi$ and $x$, combining the sign properties, we further find that $\lambda_{\min}(\phi)=\mu_0(\phi) f(\mu_0(\phi))$ is strictly increasing over $\phi\in(0,1 )$ and strictly decreasing over $\phi\in(1,\infty )$.
\end{proof}

\subsection{Properties of Fixed-Point Equations under Negative Regularization}
\label{subsec:analytic-properties-fp-sols-v}

Our analysis involves $v(\lambda, \phi)$ as a unique solution to the fixed-point equation in 
\begin{align}
    \label{eq:basic-ridge-equivalence-v-fixed-point}
    \frac{1}{v(\lambda, \phi)}
    =
    \lambda
    + \phi \int \frac{r}{1+v(\lambda, \phi)r} \, \rd H_p(r).
\end{align}
Define $\mu(\lambda, \phi) = 1/v(\lambda, \phi)$. Equivalently, we have that $\mu(\lambda, \phi)$ is a unique solution to the following fixed-point equation:
\begin{align}
    \label{eq:basic-ridge-equivalence-mu-fixed-point}
    \mu(\lambda,\phi)
    =
    \lambda
    + \phi \int \frac{\mu(\lambda,\phi)r}{\mu(\lambda,\phi)+r} \, \rd H_p(r).
\end{align}

The analytic properties of the function $\lambda\mapsto v(\lambda, \phi)$ on $(\lambda_{\min}(\phi),+\infty)$ for $\phi\in(1,\infty)$ are detailed in \Cref{lem:fixed-point-v-lambda-properties}. 

\begin{lemma}[Analytic properties in the regularization parameter]
    \label{lem:fixed-point-v-lambda-properties}
    Let $0<a \leq b<\infty$ be real numbers.
    Let $P$ be a probability measure supported on $[a, b]$.
    Let $\phi>0$ be a real number.
    For $\lambda > \lambda_{\min}(\phi)$, let $v(\lambda, \phi) > 0$ denote the solution to the fixed-point equation
    \[
        \mu(\lambda, \phi)
        =  \lambda
        + \phi \int \frac{\mu(\lambda, \phi)r}{ r + \mu(\lambda, \phi)} \, \mathrm{d}P(r).
    \]
    Then the following properties hold:
    \begin{enumerate}[(1),font=\normalfont]
        \item\label{lem:fixed-point-v-lambda-properties-item-monotonicity} \emph{(Monotonicity)} For $\phi\in(0,\infty)$, the function $\lambda \mapsto \mu(\lambda, \phi)$ is strictly increasing in $\lambda\in(\lambda_{\min}(\phi),\infty)$.

        \item \emph{(Range)} For $\phi\in(0,1]$, $\lim_{\lambda\rightarrow\lambda_{\min}(\phi)^-}\mu(\lambda, \phi)=-\infty$ and $\mu(0,\phi)=0$.
        For $\phi\in(1,\infty)$, $\lim_{\lambda\rightarrow\lambda_{\min}(\phi)^-}\mu(\lambda, \phi)\in(0,\infty)$.
        For $\phi\in(0,\infty)$, $\lim_{\lambda\rightarrow+\infty}\mu(\lambda, \phi)=+\infty$.
        
        \item\label{lem:fixed-point-v-lambda-properties-item-differentiability} \emph{(Differentiability)} 
        For $\phi\in(0,\infty)$, the function $\lambda \mapsto \mu(\lambda, \phi)$ is differentiable over $\Lambda$.

        \item \label{lem:fixed-point-v-lambda-properties-item-monotonicity-lam-mu} The function $\lambda \mapsto \lambda/\mu_p(\lambda,\phi)$ is strictly increasing in $\lambda$ with $\lim_{\lambda\rightarrow0}\lambda /\mu_p(\lambda,\phi)=0$ and $\lim_{\lambda\rightarrow0}\lambda /\mu_p(\lambda,\phi)=1$.
    \end{enumerate}
\end{lemma}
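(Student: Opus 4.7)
The plan is to recast the implicit definition as the inverse of a scalar function and then read off all four properties by calculus. Specifically, I would rewrite the fixed-point equation \eqref{eq:basic-ridge-equivalence-mu-fixed-point} as $\lambda = F_\phi(\mu)$, where
\begin{equation*}
F_\phi(\mu) \;:=\; \mu - \phi \int \frac{\mu r}{r+\mu} \, dP(r) \;=\; \mu\left(1 - \phi \int \frac{r}{r+\mu} \, dP(r)\right),
\end{equation*}
so that $\mu(\cdot, \phi)$ is the inverse of $F_\phi$ on the appropriate branch. The basic computation is
\begin{equation*}
F_\phi'(\mu) \;=\; 1 - \phi \int \frac{r^2}{(r+\mu)^2} \, dP(r),
\end{equation*}
and by \Cref{lem:v0} the quantity $\mu_0(\phi)$ is precisely the unique $\mu > -a$ at which the integral equals $1/\phi$, i.e.\ the unique root of $F_\phi'$. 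Since $\mu \mapsto \int r^2/(r+\mu)^2 \, dP(r)$ is strictly decreasing on $(-a, \infty)$, we get $F_\phi'(\mu) > 0$ for all $\mu > \mu_0(\phi)$.

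From here, parts \ref{lem:fixed-point-v-lambda-properties-item-monotonicity} and \ref{lem:fixed-point-v-lambda-properties-item-differentiability} follow immediately from the inverse function theorem applied to $F_\phi \colon (\mu_0(\phi), \infty) \to (\lambda_{\min}(\phi), \infty)$, yielding a $C^1$ strictly increasing inverse $\mu(\cdot,\phi)$ with derivative $1/F_\phi'(\mu(\lambda,\phi))$. For the range statements, I would evaluate $F_\phi$ at the boundary: at $\mu = \mu_0(\phi)$ the definition of $\lambda_{\min}(\phi)$ in \eqref{eq:lam_min} gives $F_\phi(\mu_0) = \lambda_{\min}$, while dominated convergence yields $F_\phi(\mu)/\mu \to 1$ as $\mu \to \infty$, hence $F_\phi(\mu) \to \infty$. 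Splitting on the sign of $\mu_0(\phi)$ supplied by \Cref{lem:v0}, the case $\phi \le 1$ gives $\mu_0(\phi) \le 0$ and $F_\phi(0) = 0$, so $\mu(0, \phi) = 0$; the case $\phi > 1$ gives $\mu_0(\phi) \in (0, \infty)$, so $\lim_{\lambda \to \lambda_{\min}^{+}} \mu(\lambda, \phi) = \mu_0(\phi) \in (0, \infty)$.

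For the final part, the fixed-point equation itself gives the key identity
\begin{equation*}
\frac{\lambda}{\mu(\lambda,\phi)} \;=\; 1 - \phi \int \frac{r}{r+\mu(\lambda,\phi)} \, dP(r),
\end{equation*}
which is a composition of two monotone maps: $r/(r+\mu)$ is strictly decreasing in $\mu$, so the right-hand side is strictly increasing in $\mu$, and $\mu(\cdot, \phi)$ is strictly increasing in $\lambda$ by the first part. The limits are then read off by plugging in: as $\lambda \to \infty$ we have $\mu \to \infty$ and the integrand tends to $0$, giving $\lambda/\mu \to 1$; as $\lambda \to 0^+$ in the overparameterized case $\phi > 1$, $\mu \to \mu_0(\phi) \in (0, \infty)$ so $\lambda/\mu \to 0$.

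The main obstacle I anticipate is not computational but structural: $F_\phi$ has a critical point at $\mu_0(\phi)$ and is globally two-to-one near its minimum, so one must justify that the fixed-point solution in \eqref{eq:basic-ridge-equivalence-mu-fixed-point} lies on the \emph{right} branch $(\mu_0(\phi), \infty)$ rather than the decreasing branch $(-a, \mu_0(\phi))$. I would address this by invoking the uniqueness of the Stieltjes-transform solution in the upper half plane from \citet{silverstein1995analysis} and its analytic continuation to $\lambda > \lambda_{\min}$ established in \citet{lejeune2022asymptotics}, which pins down the correct branch, and then verify that this branch indeed matches the increasing branch of $F_\phi$ by checking consistency at one convenient value (e.g., $\lambda$ large, where $\mu \approx \lambda \to \infty$).
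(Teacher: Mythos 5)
Your proof follows essentially the same route as the paper's: both recast the fixed-point equation as $\lambda = f(\mu)$ with $f(x) = x - \phi\int \frac{xr}{x+r}\,\mathrm{d}P(r)$, identify $\mu(\cdot,\phi)$ with $f^{-1}$, and obtain parts \ref{lem:fixed-point-v-lambda-properties-item-monotonicity} and \ref{lem:fixed-point-v-lambda-properties-item-differentiability} from monotonicity of $f$, then derive part \ref{lem:fixed-point-v-lambda-properties-item-monotonicity-lam-mu} from the identity $\lambda/\mu = 1 - \phi\int r/(r+\mu)\,\mathrm{d}P(r)$. The paper's proof is terse and asserts strict monotonicity of $f$ without computing $f'$, citing an earlier lemma for differentiability and not proving part (2) at all; your explicit computation of $f'(\mu) = 1 - \phi\int r^2/(r+\mu)^2\,\mathrm{d}P(r)$, identification of $\mu_0(\phi)$ as its unique root via \Cref{lem:v0}, and the branch-selection argument via the analytic continuation of \citet{lejeune2022asymptotics} make explicit what the paper leaves implicit. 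One small slip: in your treatment of part \ref{lem:fixed-point-v-lambda-properties-item-monotonicity-lam-mu} you claim that as $\lambda \to 0^{+}$ with $\phi > 1$ we have $\mu \to \mu_0(\phi)$; this should be $\mu \to \mu(0,\phi)$, which is strictly larger than $\mu_0(\phi) = \mu(\lambda_{\min}(\phi),\phi)$ since $\lambda_{\min}(\phi) < 0$ in the overparameterized regime, though the conclusion $\lambda/\mu \to 0$ survives because $\mu(0,\phi)$ is still a strictly positive finite number.
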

\begin{proof}
    Note that 
    \[
        \lambda
        =  
        \mu(\lambda, \phi) - \phi \int \frac{\mu(\lambda, \phi)r}{ r + \mu(\lambda, \phi)} \, \mathrm{d}P(r).
    \]
    Define a function $f$ by
    \[f(x) = x - \phi\int\frac{xr}{x+r}\rd P(r).\]
    Observe that $\mu(\lambda,\phi)=f^{-1}(\lambda)$. The claim of differentiability of the function $\lambda\mapsto \mu(\lambda,\phi)$ follows from the differentiability and strict monotonicity of $f$, similar to \citet[Lemma S.6.14]{patil2022mitigating}.

    For the last property, from the definition of the fixed-point equation, we have
    \begin{align*}
        1 = \frac{\lambda}{\mu_p(\lambda,\phi)} + \phi \int \frac{r}{r + \mu(\lambda,\phi)} \rd P(r).
    \end{align*}
    Because $\phi \int \frac{r}{r + \mu(\lambda,\phi)} \rd P(r)$ is strictly decreasing in $\lambda$, we have that $\lambda/\mu_p(\lambda,\phi)$ is strictly increasing in $\lambda$.
    Because $\lim_{\lambda\rightarrow +\infty}\mu_p(\lambda,\phi) = +\infty$, we know that
    \begin{align*}
        \lim_{\lambda\rightarrow \infty}\phi \int \frac{r}{r + \mu(\lambda,\phi)} \rd P(r) = 0.
    \end{align*}
    and thus,
    \begin{align*}
        \lim_{\lambda\rightarrow \infty} \frac{\lambda}{\mu_p(\lambda,\phi)}  = 1.
    \end{align*}
    On the other hand, because $\lim_{\lambda\rightarrow 0}\mu_p(\lambda,\phi) = 0$ for $\phi\in(0,1]$ and $\lim_{\lambda\rightarrow 0}\mu_p(\lambda,\phi)<\infty$, it directly implies that
    \begin{align*}
        \lim_{\lambda\rightarrow 0} \frac{\lambda}{\mu_p(\lambda,\phi)}  = 0.
    \end{align*}
    Consequently, the conclusion follows.
\end{proof}

\begin{lemma}[Continuity properties in the aspect ratio for ridge regression, adapted from \citet{patil2022bagging}]
    \label{lem:ridge-fixed-point-v-properties}
    Let $a > 0$, $b < \infty$ and $\lambda\in \RR$ be real numbers.
    Let $P$ be a probability measure supported on $[a, b]$.
    For $\lambda\in \RR$, let $\Phi(\lambda) = \{\phi \in (0,\infty) \mid \lambda_{\min}(\phi) < \lambda\}$.
    Consider the function $\mu(\lambda, \cdot) : \phi \mapsto \mu(\lambda, \phi)$, over $\phi\in\Phi(\lambda)$ such that $\lambda_{\min}(\phi)\leq \lambda$, where $\mu(\lambda, \phi) \geq - a$ is the unique solution to the following fixed-point equation:
    \begin{equation}
        \label{eq:ridge-fixed-point-gen-phi}
        \mu(\lambda, \phi)
         = \lambda + \phi \int \frac{\mu(\lambda, \phi) r}{\mu(\lambda, \phi)+r} \rd P(r).
    \end{equation}
    Then the following properties hold:
    \begin{enumerate}[label={(\arabic*)},font=\normalfont]
        \item 
        \label{lem:ridge-fixed-point-v-properties-item-v-bound}        
        The range of the function $\mu(-\lambda; \cdot)$ is a subset of $[\lambda,\infty]$ when $\lambda\geq0$ and $[-a,\infty]$ when $\lambda<0$.
        
        \item 
        \label{lem:ridge-fixed-point-v-properties-item-v-properties}
        The function $\mu(-\lambda; \cdot)$ is continuous and strictly increasing over $\Phi(\lambda)$.
        Furthermore, $\lim_{\phi \to \infty} \mu(\lambda, \phi) = +\infty$, $\lim_{\phi \to 0^{+}} \mu(\lambda, \phi) = \lambda$ when $\lambda\geq 0$, and $\lim_{\phi \to 0^{+}} \mu(\lambda, \phi) = -\infty$ when $\lambda< 0$.

        \item
        \label{lem:ridge-fixed-point-v-properties-item-vv-properties}
        The function 
        $\tv_v(-\lambda; \cdot) : \phi \mapsto \tv_v(\lambda, \phi)$,
        where
        \[
           \tv_v(\lambda, \phi) =
           \left(
                \mu(\lambda, \phi)^{2}
                - \phi \int  (\mu(\lambda, \phi)r)^2(\mu(\lambda, \phi) + r )^{-2} 
                \, \mathrm{d}P(r)
           \right)^{-1},
        \]
        is positive and continuous over $\Phi(\lambda)$.
        
        \item
        \label{lem:ridge-fixed-point-v-properties-item-vb-properties}
        The function 
        $\tv_b(-\lambda; \cdot) : \phi \mapsto \tv_b(\lambda, \phi)$,
        where
        \[
            \tv_b(\lambda, \phi)
            = \tv_v(\lambda, \phi)
            \int
            \phi (\mu(\lambda, \phi)r)^2(\mu(\lambda, \phi) + r)^{-2}
            \, \mathrm{d}P(r),
        \]
        is positive and continuous over $\Phi(\lambda)$.
    \end{enumerate}
\end{lemma}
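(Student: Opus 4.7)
The plan is to treat the defining fixed-point equation as the implicit relation $F(\mu,\phi) := \mu - \lambda - \phi\int \mu r/(\mu+r)\,\rd P(r) = 0$ on the admissible region $\mu > -a$, and extract all four claims from monotonicity combined with the implicit function theorem. For any $\phi \in \Phi(\lambda)$, the choice $\lambda > \lambda_{\min}(\phi)$ is precisely what guarantees (via \Cref{def:lower_bound_negative_regularization}) that the corresponding $\mu(\lambda,\phi)$ lies strictly above $\mu_{\min}(\phi)$, and hence in the region where all integrals below are finite and differentiation under the integral sign is justified by dominated convergence on compact subintervals of $\Phi(\lambda)$.

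For parts (1) and (2), I would first compute
\[
\partial_\mu F(\mu,\phi) = 1 - \phi\int \frac{r^2}{(\mu+r)^2}\,\rd P(r),
\]
which by \Cref{def:lower_bound_negative_regularization} is strictly positive for every $\mu > \mu_{\min}(\phi)$. The implicit function theorem then yields continuity and differentiability of $\phi \mapsto \mu(\lambda,\phi)$ on $\Phi(\lambda)$, with
\[
\frac{\partial \mu}{\partial \phi}
= \frac{\int \mu r/(\mu+r)\,\rd P(r)}{1 - \phi\int r^2/(\mu+r)^2\,\rd P(r)}.
\]
Strict monotonicity in $\phi$ then reduces to positivity of the numerator on the relevant region: when $\mu \ge 0$ it is immediate, and when $\mu < 0$ (possible only in the negative-$\lambda$ regime) one rewrites $\mu r/(\mu+r) = \mu - \mu^2/(\mu+r)$ and combines with the fixed-point equation \eqref{eq:ridge-fixed-point-gen-phi} to conclude positivity. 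The range statement in part (1) then follows: rearranging the fixed-point equation as $\mu - \lambda = \phi\int \mu r/(\mu+r)\,\rd P(r)$ forces $\mu \ge \lambda$ when $\lambda \ge 0$, while the constraint $\mu > -a$ is built in. For the limits in part (2), the case $\phi \to \infty$ is handled by contradiction (if $\mu$ stayed bounded, the right-hand side of \eqref{eq:ridge-fixed-point-gen-phi} would diverge); the case $\phi \to 0^+$ with $\lambda \ge 0$ gives $\mu \to \lambda$ directly from the equation, while for $\lambda < 0$ one invokes \Cref{lem:v0} to see that $\lambda_{\min}(\phi) \uparrow 0$ as $\phi \to 0^+$, so the admissible $\mu$ must leave every compact set, forcing $\mu \to -\infty$.

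For parts (3) and (4), continuity of $\tv_v$ and $\tv_b$ on $\Phi(\lambda)$ is inherited from the continuity of $\mu(\lambda,\cdot)$ established above. Strict positivity of $\tv_v$ is precisely the statement $\partial_\mu F(\mu,\phi) > 0$ already used, and positivity of $\tv_b$ follows since it is $\tv_v$ multiplied by a strictly positive integral (the integrand $(\mu r/(\mu+r))^2$ is nonnegative, and it is nontrivial because $P$ is supported on $[a,b]$ with $a > 0$).

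The main obstacle will be the careful handling of the lower boundary $\lambda \to \lambda_{\min}(\phi)^+$, equivalently $\mu \to \mu_{\min}(\phi)^+$, where the denominator $1 - \phi\int r^2/(\mu+r)^2 \,\rd P(r)$ pinches to zero and $\tv_v$ blows up. Making this uniform across $\phi \in \Phi(\lambda)$, whose boundary itself depends on $\lambda$ through $\lambda_{\min}$, requires combining the strict monotonicity of $\mu_{\min}(\phi)$ in $\phi$ from \Cref{lem:v0} with a compactness argument on subintervals bounded away from the critical locus. Since this lemma is labeled as adapted from \citet{patil2022bagging}, the positive-$\lambda$ bookkeeping can be imported essentially verbatim from that reference; the genuinely new ingredient is the negative-$\lambda$ regime, whose well-posedness is secured precisely by \Cref{lem:v0} together with the guarantee $\mu(\lambda,\phi) > \mu_{\min}(\phi) > -a$.
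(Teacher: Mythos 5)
Your implicit-function-theorem scaffolding is sound for parts (1), (3), and (4), and the positivity identity $\partial_\mu F(\mu,\phi) = 1 - \phi\int r^2/(\mu+r)^2\,\rd P(r) > 0$ for $\mu > \mu_{\min}(\phi)$ (which is exactly the reciprocal of $\tv_v$) is the right observation. Note the paper itself gives no detailed proof here, deferring entirely to \citet[Lemma~S.6.15]{patil2022mitigating}, so there is no textual comparison to be made; the question is whether your argument closes.

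The real problem is in part (2), specifically your claim that the numerator $\int \mu r/(\mu+r)\,\rd P(r)$ is positive when $\mu < 0$. It is not, and the rewriting $\mu r/(\mu+r) = \mu - \mu^2/(\mu+r)$ does not rescue it: the two terms $\mu$ and $-\mu^2/(\mu+r)$ are \emph{both} negative when $-a < \mu < 0$ and $r \ge a > 0$. Indeed, from the fixed-point equation one has $\int \mu r/(\mu+r)\,\rd P(r) = (\mu - \lambda)/\phi$, and when $\mu < 0$ the integral itself shows $\lambda = \mu - \phi\int \mu r/(\mu+r)\,\rd P(r) > \mu$, so the numerator is strictly negative. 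Your derivative formula then gives $\partial\mu/\partial\phi < 0$ on this region. A concrete instance: take $P = \delta_1$, $\lambda = -0.05$; then $\lambda_{\min}(\phi) = -(1-\sqrt\phi)^2$ so $\Phi(\lambda) \supset (0,0.6)$, and solving the quadratic $\mu^2 + (1-\phi+\lambda)\mu + \lambda = 0$ for the admissible root gives $\mu(\lambda, 0.1) \approx -0.056$, $\mu(\lambda, 0.5) \approx -0.115$, $\mu(\lambda, 0.6) = -0.2$ --- strictly \emph{decreasing}. So the mechanism you propose for the $\mu < 0$ case fails, and not because of a missing inequality but because the claimed sign is wrong on that sub-branch. (This also signals a latent imprecision in the lemma statement itself, whose $\mu(-\lambda;\cdot)$ versus $\mu(\lambda,\cdot)$ notation is inconsistent with its own limit clause; the monotone-in-$\phi$ claim is clean only when $\lambda \ge 0$, which is what the cited source treats.) Everything else in your sketch --- range, continuity, the $\phi\to 0^+$ and $\phi\to\infty$ limits, and the positivity of $\tv_v$, $\tv_b$ on the interior --- holds up once that sign issue is flagged and the monotonicity claim is correspondingly restricted.
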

\begin{proof}
    Properties (1)-(4) follow similarly to \citet[Lemma S.6.15]{patil2022mitigating}.
\end{proof}

\subsection{Contours of Fixed-Point Solutions under Negative Regularization}
\label{subsec:analytic-properties-fp-sols-contour}

\begin{lemma}[Contours of fixed-point solutions]\label{lem:v-equiv-path}
    As $n,p\rightarrow\infty$ such that $p/n\rightarrow\phi\in(0,\infty)$, let $\lambda_{\min}:\phi\mapsto\lambda_{\min}(\phi)$ as defined in \eqref{eq:lam_min}.
    For any $\bar{\psi}\in[\phi,+\infty]$, there exists a unique value $\bar{\lambda} \geq \lambda_{\min}(\bar{\psi})$ (or conversely for $\bar{\lambda}\in[\lambda_{\min}(\phi),\infty]$, there exists a unique value $\bar{\psi}\in[\phi,\infty]$) such that for all $(\lambda, \psi)$ on the path (as in \Cref{fig:inv_v})
    \begin{align}
        \cP=\{(1 - \theta)\cdot(\bar{\lambda}, \phi) + \theta\cdot(\lambda_{\min}(\bar{\psi}), \bar{\psi})\mid \theta\in[0, 1]\}, \label{eq:path}
    \end{align}
    it holds that
    \begin{align*}
        \mu(\lambda,\psi) = \mu(\overline{\lambda}, \phi) = \mu(\lambda_{\min}(\bar{\psi}), \bar{\psi}),
    \end{align*}
    where $\mu(\lambda,\psi)$ is as defined in \eqref{eq:basic-ridge-equivalence-mu-fixed-point}.
    
    \begin{figure}[!ht]
        \centering
        \includegraphics[width=0.6\textwidth]{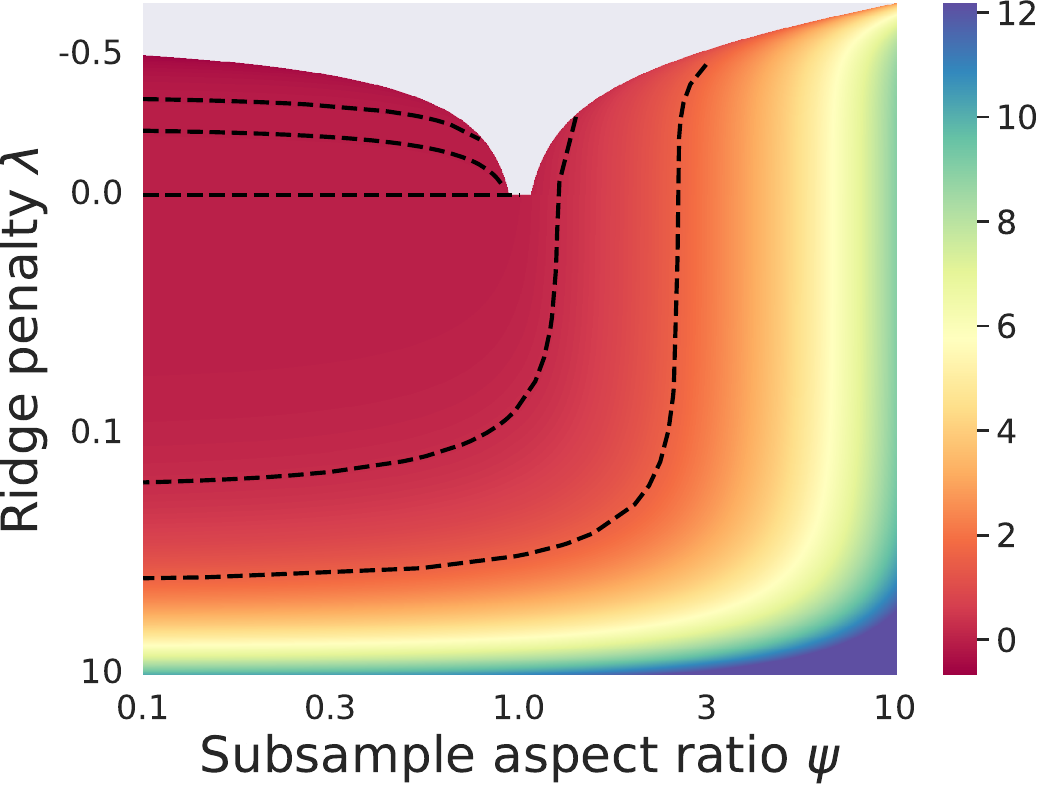}
        \caption{Heatmap of $\mu_p(\lambda, \psi)$ for isotropic covariance matrix $\bSigma=\bI$ in the symmetric log-log scale (where the logarithmic scale is applied symmetrically to both positive and negative values on the $x$-axis and $y$-axis).
        The 5 black dashed lines indicate 5 different equivalence paths.
        The boundary of negative ridge penalties is given by $-(1 - \sqrt{\psi})^2$.}
        \label{fig:inv_v}
    \end{figure}
    
\end{lemma}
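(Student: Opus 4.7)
The plan rests on a single structural observation: although the fixed-point equation
\[
\mu \;=\; \lambda + \psi\,h(\mu), \qquad h(\mu) := \int \frac{\mu r}{\mu + r}\,\rd P(r),
\]
is nonlinear in $\mu$, it is \emph{affine} in the pair $(\lambda,\psi)$ once $\mu$ is held fixed. Consequently, for any fixed value $\mu^\star$, the level set
\[
\mathcal{L}(\mu^\star) \;=\; \{(\lambda,\psi) : \mu(\lambda,\psi) = \mu^\star\}
\]
is precisely the straight line $\{(\lambda,\psi) : \lambda = \mu^\star - \psi\,h(\mu^\star)\}$ in the $(\lambda,\psi)$-plane (restricted to the admissible domain $\lambda \ge \lambda_{\min}(\psi)$). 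This reduces the lemma to showing that both endpoints of the path $\cP$ lie on the same such line.

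The steps I would carry out are as follows. First, given $\bar\psi \in [\phi,\infty]$, I set $\mu^\star := \mu(\lambda_{\min}(\bar\psi),\bar\psi)$; by \Cref{def:lower_bound_negative_regularization} and the accompanying identity $\lambda_{\min}(\bar\psi) = \mu^\star - \bar\psi\,h(\mu^\star)$, the endpoint $(\lambda_{\min}(\bar\psi),\bar\psi)$ indeed lies on $\mathcal{L}(\mu^\star)$. Next, I invoke monotonicity and continuity of $\lambda\mapsto \mu(\lambda,\phi)$ on $(\lambda_{\min}(\phi),\infty)$ from \Cref{lem:fixed-point-v-lambda-properties}\ref{lem:fixed-point-v-lambda-properties-item-monotonicity}: since $\mu_{\min}(\psi)$ is monotone in $\psi$ (as seen from the defining relation $1 = \psi\int r^2/(r+\mu_{\min})^2\,\rd P(r)$ using \Cref{lem:ridge-fixed-point-v-properties}), the inequality $\bar\psi \ge \phi$ gives $\mu^\star \ge \mu_{\min}(\phi)$, which lies in the range of $\mu(\cdot,\phi)$; hence there is a unique $\bar\lambda \ge \lambda_{\min}(\phi)$ with $\mu(\bar\lambda,\phi) = \mu^\star$, and this forces $\bar\lambda = \mu^\star - \phi\,h(\mu^\star)$. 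Thus $(\bar\lambda,\phi) \in \mathcal{L}(\mu^\star)$ as well.

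With both endpoints on the same line $\mathcal{L}(\mu^\star)$, the path $\cP$ — being a convex combination of its endpoints — is entirely contained in $\mathcal{L}(\mu^\star)$. Therefore every $(\lambda,\psi)\in\cP$ satisfies the affine relation $\lambda = \mu^\star - \psi\,h(\mu^\star)$, which is \emph{exactly} the fixed-point equation at $\mu = \mu^\star$. By uniqueness of the fixed-point solution (in the admissible regime $\mu > -r_{\min}$, as given in \Cref{def:lower_bound_negative_regularization} and \Cref{lem:ridge-fixed-point-v-properties}), we conclude $\mu(\lambda,\psi) = \mu^\star$ on all of $\cP$. The converse direction — starting from $\bar\lambda \in [\lambda_{\min}(\phi),\infty]$ and recovering a unique $\bar\psi \in [\phi,\infty]$ — is symmetric: one first defines $\mu^\star := \mu(\bar\lambda,\phi)$, then solves the monotone equation $\mu^\star = \mu_{\min}(\bar\psi)$ for $\bar\psi$, using the continuity/monotonicity of $\mu_{\min}(\cdot)$.

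The only delicate point I anticipate is checking that the domain constraint $\lambda \ge \lambda_{\min}(\psi)$ is respected throughout the segment $\cP$, i.e.\ that for every $\psi$ between $\phi$ and $\bar\psi$, the corresponding $\lambda = \mu^\star - \psi\,h(\mu^\star)$ satisfies $\lambda \ge \lambda_{\min}(\psi) = \mu_{\min}(\psi) - \psi\,h(\mu_{\min}(\psi))$. This amounts to showing that $\mu^\star \ge \mu_{\min}(\psi)$ for $\psi \in [\phi,\bar\psi]$, which follows from the monotonicity of $\mu_{\min}$ together with $\mu^\star = \mu_{\min}(\bar\psi)$. The boundary cases $\bar\psi = \phi$ (collapsing to a point) and $\bar\psi = \infty$ (requiring $\mu^\star \to$ limiting value and a limiting argument on $h$) should be handled by continuity. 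No new techniques beyond \Cref{lem:v0}, \Cref{lem:fixed-point-v-lambda-properties}, and \Cref{lem:ridge-fixed-point-v-properties} appear to be needed.
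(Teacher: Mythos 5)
Your proposal is correct and matches the paper's own argument in substance: the paper also observes that the fixed-point equation $\mu = \lambda + \psi\,h(\mu)$ is affine in $(\lambda,\psi)$ for fixed $\mu$, constructs $\bar\lambda$ by a monotonicity-plus-intermediate-value argument (using \Cref{lem:v0} for $\psi\mapsto\mu(\lambda_{\min}(\psi),\psi)$ and \Cref{lem:ridge-fixed-point-v-properties} for $\lambda\mapsto\mu(\lambda,\phi)$), takes a convex combination of the two fixed-point equalities at the endpoints, and concludes by uniqueness of the fixed-point solution. Your "level set is a line" phrasing is an equivalent restatement of the paper's explicit convex-combination computation, and your observation that domain feasibility along $\cP$ reduces to $\mu^\star \ge \mu_{\min}(\psi)$ for $\psi\in[\phi,\bar\psi]$ (via monotonicity of $\mu_{\min}$, \Cref{lem:v0}) supplies a point that the paper leaves implicit.
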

\begin{proof}
    Note that $\mu(\lambda_{\min}(\phi),\phi) = \mu_0(\phi)$ which is the solution to the fixed-point equation \eqref{eq:v0}.
    From \Cref{lem:v0} (2), we see that the function $\psi\mapsto \mu(\lambda_{\min}(\psi), \psi)$ is strictly increasing over $\psi\in[\phi,\infty]$ with the range \begin{align*} \lim_ {\psi\rightarrow \phi} \mu(\lambda_{\min}(\psi), \psi) = 
        \mu_0(\phi), \quad \lim_{\psi\rightarrow +\infty} \mu(\lambda_{\min}(\psi), \psi) = 
        +\infty.
    \end{align*}
    From \Cref{lem:ridge-fixed-point-v-properties}, the function $\lambda\mapsto \mu(\lambda, \phi)$ is strictly increasing over $\lambda\in[\lambda_{\min},\infty]$ with range
    \begin{align*}
        \lim_{\lambda\rightarrow \lambda_{\min}} \mu(\lambda, \phi) = 
        \mu_0(\phi), \quad \lim_{\lambda\rightarrow +\infty} \mu(\lambda, \phi) = +\infty.
    \end{align*}

    For $\bar{\psi}\in[\phi,\infty]$, by the intermediate value theorem, there exists a unique $\bar{\lambda}\in[\lambda_{\min}(\phi),\infty]$ such that $v(\bar{\lambda}, \phi)=v(-\lambda_{\min}(\bar{\psi});\bar{\psi})$.
    Conversely, for $\bar{\lambda}\in[\lambda_{\min}(\phi),\infty]$, there also exists a unique $\bar{\psi}\in[\phi,\infty]$ such that $v(\bar{\lambda}, \phi)=v(-\lambda_{\min}(\bar{\psi});\bar{\psi})$. 

    Based on the definition of fixed-point solutions, it follows that
    \begin{align*}
        \mu(\bar{\lambda}, \phi) &= \bar{\lambda} + \phi\int\frac{\mu(\bar{\lambda}, \phi) r}{\mu(\bar{\lambda}, \phi) +  r}\rd H_p(r) = \lambda_{\min}(\phi) + \bar{\psi}\int\frac{\mu(-\lambda_{\min};\bar{\psi})r}{\mu(-\lambda_{\min};\bar{\psi}) +  r}\rd H_p(r) = \mu(-\lambda_{\min};\bar{\psi}) .
    \end{align*}
    Then, for any $(\lambda, \psi) = (1-\theta)(\bar{\lambda},\phi) + \theta (\lambda_{\min}(\bar{\psi}),\bar{\psi})$ on the path $\cP$, we have
    \begin{align*}
        \mu(\bar{\lambda}, \phi) &= (1-\theta)\mu(\bar{\lambda}, \phi) + \theta\mu(\lambda_{\min}(\bar{\psi}), \bar{\psi}) \\
        &= (1-\theta) \bar{\lambda} + (1-\theta)\phi\int\frac{\mu(\bar{\lambda}, \phi) r}{\mu(\bar{\lambda}, \phi) +  r}\rd H_p(r) + \theta\lambda_{\min}(\bar{\psi}) + \theta\bar{\psi}\int\frac{\mu(-\lambda_{\min}(\bar{\psi});\bar{\psi})r}{\mu(-\lambda_{\min}(\bar{\psi});\bar{\psi}) + r}\rd H_p(r)\\
        &= \lambda + \psi\int\frac{\mu(\bar{\lambda}, \phi)r}{\mu(\bar{\lambda}, \phi) + r}\rd H_p(r).
    \end{align*}
    Because $\mu(\lambda, \psi)$ is the unique solution to the fixed-point equation:    
    \begin{align*}
        \mu(\lambda, \psi) &= \lambda + \psi \int \frac{\mu(\lambda, \psi)r}{\mu(\lambda, \psi)+r} \rd H_p(r),
    \end{align*}
    it then follows that $\mu(\lambda, \psi) = \mu(\bar{\lambda}, \phi) = \mu(\lambda_{\min}(\bar{\psi}),\bar{\psi})$.
    This completes the proof.
\end{proof}

\clearpage
\section{Experimental Details and Additional Numerical Illustrations}
\label{sec:additional_numerical_illustrations}

The source code for reproducing the results of this paper can be found at the following location: \url{https://github.com/jaydu1/ood-ridge}.

\subsection{Additional Illustrations for \Cref{sec:optimal_regularization_signs}}
\label{sec:additional-illustrations-sec:optimal_regularization_signs}

\subsubsection[Numerical verification of the general alignment condition]{Numerical verification of \eqref{eq:align-cond} under generic alignment scenarios}

Under \Cref{asm:train-test}, suppose the data model has a covariance matrix $\bSigma$ to be $(\bSigma_{\mathrm{ar1}})_{ij}:= \rhoar^{|i-j|}$ with parameter $\rhoar\in(0,1)$, and a coefficient $\bbeta:=\frac{1}{2}(\bw_{(1)} + \bw_{(p)})$, where $\bw_{(j)}$ is the $j$th eigenvector of $\bSigma_{\mathrm{ar1}}$.

\begin{align*}
    \otr[\bSigma \bB] &= \frac{1}{4p}(\bw_{(1)} + \bw_{(p)})^{\top}\bSigma(\bw_{(1)} + \bw_{(p)}) =\frac{1}{4p}\bw_{(1)}^{\top}\bSigma\bw_{(1)} + \frac{1}{4p}\bw_{(p)}^{\top}\bSigma\bw_{(p)}=\frac{1}{4}(r_{(1)}+r_{(p)}).
\end{align*}
On the other hand, we also have
\begin{align*}
    \otr[\bSigma] \otr[\bB] &= \sum_{j=1}^p \frac{r_{(j)}}{p}  \cdot \frac{\|\bbeta\|_2^2}{p} = \frac{1}{2p^2}\sum_{j=1}^pr_{(j)}
\end{align*}
One can numerically verify that when $p=500 $ and $\rhoar=0.5$,
\begin{align*}
    r_{(1)}+r_{(p)} \approx 3.33 > 2 = \frac{2}{p}\sum_{j=1}^pr_{(j)}
\end{align*}
which contradicts the implication \eqref{eq:strict-align-imp} of the strict alignment condition.

\begin{figure}[!ht]
    \centering
    \includegraphics[width=0.6\textwidth]{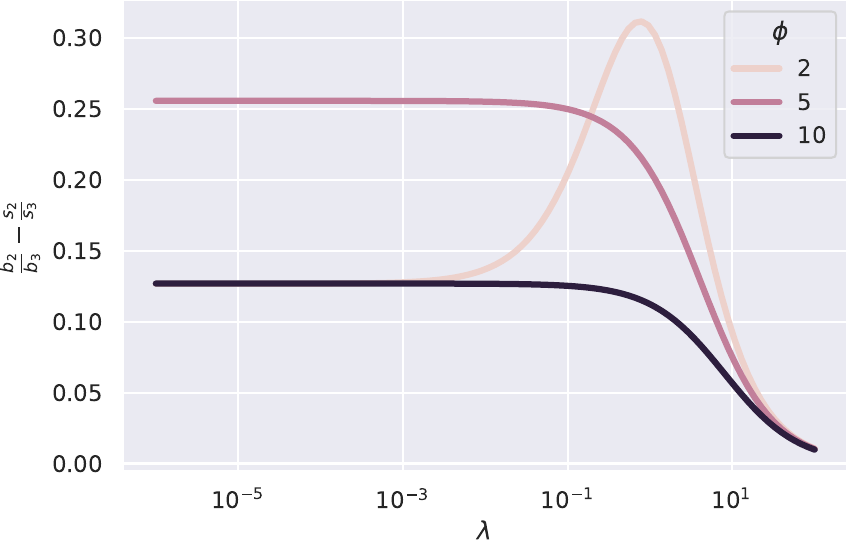}
    \caption{Numerical evaluation of $b_2/b_3-s_2/s_3$ under the same data model in \Cref{fig:negative_optimal_our_condition}, where $s_k=\otr[\bSigma \{ \mu^{k} (\bSigma + \mu \bI)^{-k} \}]$ and $b_k=\otr[\bSigma_{\bbeta} \{ \mu^{k} (\bSigma + \mu \bI)^{-k} \}]$ for $k=2,3$.}
    \label{fig:align-cond}
\end{figure}

On the other hand, from \Cref{fig:align-cond}, we see that the general alignment condition
\begin{align*}
    \frac{\otr[\bSigma_{\bbeta} \{ \mu^{2} (\bSigma + \mu \bI)^{-2} \}]
    }{\otr[\bSigma_{\bbeta} \{ \mu^3 (\bSigma + \mu \bI)^{-3} \}] 
    } &=  \frac{\otr[\bSigma \{ \mu^{2} (\bSigma + \mu \bI)^{-2} \}]
    }{\otr[\bSigma\{ \mu^3 (\bSigma + \mu \bI)^{-3} \}] 
    }
\end{align*}
holds for various data aspect ratios in the noiseless setting.

\subsubsection{Real Data Illustration}
\label{sec:real_data_illustration}

Following the approach of \citet{kobak_lomond_sanchez_2020}, we consider a similar setup using random Fourier features on MNIST. 

\paragraph{Feature generation.}
The pixel values are normalized to the range $[-1,1]$. 
The features are then mapped from the original dimension of $28\times 28$ to 1000 random Fourier features. 
This is achieved by multiplying the features with a $784\times500$ random matrix $\bW$, where the elements are independently drawn from a normal distribution with mean $0$ and standard deviation $0.2$. 
The real and imaginary parts of $\exp(-i \bX\bW)$ are taken as separate features, where $\bX\in\RR^{n\times784}$. 

\paragraph{Training details.}
For training, we randomly select $n=64$ images, and for testing, we hold out 10,000 images. 
The response variable $y$ represents the digit value ranging from $0$ to $9$. 
Our model includes an intercept term, which is not subject to penalization. 
To estimate the expected out-of-distribution risk, we average the risks across 100 random samples from the training set.

\paragraph{Distribution shift.}
To generate distribution shift, we gradually exclude samples with given labels in the test set: 
\begin{enumerate}[-,leftmargin=4mm]
    \item \texttt{Type 1}: $\varnothing$
    \item \texttt{Type 2}: $\{4\}$ 
    \item \texttt{Type 3}: $\{3,4\}$ 
    \item \texttt{Type 4}: $\{2,3,4\}$ 
    \item \texttt{Type 5}: $\{1,2,3,4\}$
\end{enumerate}
In other words, \texttt{Type} 1 represents no covariate shift, while \texttt{Type} 5 represents the case with potentially the most severe covariate shift. 
The results are summarized in \Cref{fig:MNIST}. 
We observe a clear pattern where the optimal ridge penalty shifts toward negative values, suggesting that \Cref{thm:stationary-point-cov-shift} may occur in real-world datasets. 
However, for \texttt{Type} 5, the optimal ridge penalty becomes positive again. This could be due to the removal of Class 1, which reduces the degree of alignment.
Also, observe that the minimum risks increase from \texttt{Type 1} to \texttt{5}.

\begin{figure*}[!ht]
    \centering
    \includegraphics[width=0.99\textwidth]{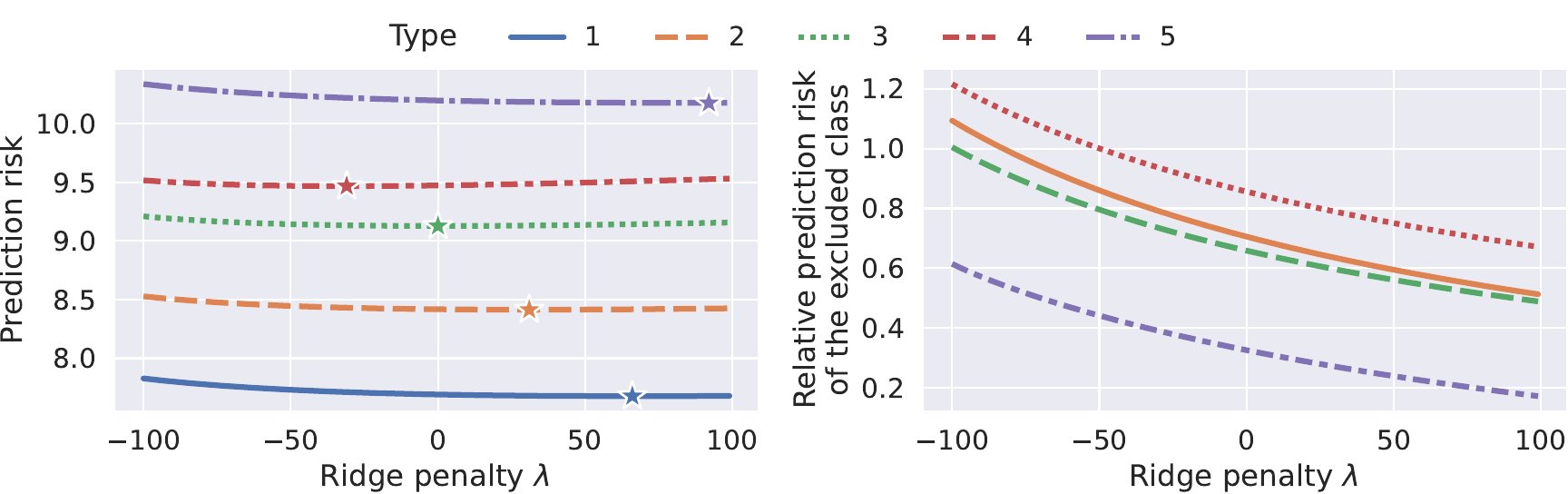}
    \caption{
        \textbf{Effect of distribution shift on the optimal ridge penalty on MNIST.}
        (a) The left panel illustrates the risk profile (against the regularization penalty) of ridge regression on the MNIST dataset when subjected to different types of distribution shifts.         
        Different colors represent different types of shift from less severe (\texttt{Type} 1) to more severe (\texttt{Type} 5).
        The y-axis represents the out-of-distribution prediction risk for the task of accurately predicting the digit value for unseen images. 
        The figure shows a clear pattern where the optimal ridge penalty shifts towards negative values, in the spirit of \Cref{thm:stationary-point-sig-shift}.
        The only exception seems to be \texttt{Type} 5 for which the optimal ridge penalty becomes positive again.
        It is likely due to the removal of Class 1, which reduces the degree of alignment.
        (b) The right panel shows the relative OOD prediction risk computed only on the excluded class, compared to the in-distribution prediction risk of the ridge predictor fitted only on the training data of the same class.
        The reason we compare the relative prediction risk is to compensate for the differences in the conditional variances in the different classes. 
        Observe that Class 1 (\texttt{Type} 5) has the lowest relative prediction risk, which partially explains the increase in optimal regularization of \texttt{Type} 5 in the left panel.
        }
    \label{fig:MNIST}
\end{figure*}

\clearpage
\subsection{Additional Illustrations for \Cref{sec:discussion} (Ridge versus Lasso Monotonicities)}
\label{sec:additional_illustrations-sec:discussion}

\subsubsection{Subsampled ridgeless versus full ridge}
\begin{figure*}[!ht]
    \centering
    \includegraphics[width=0.99\textwidth]{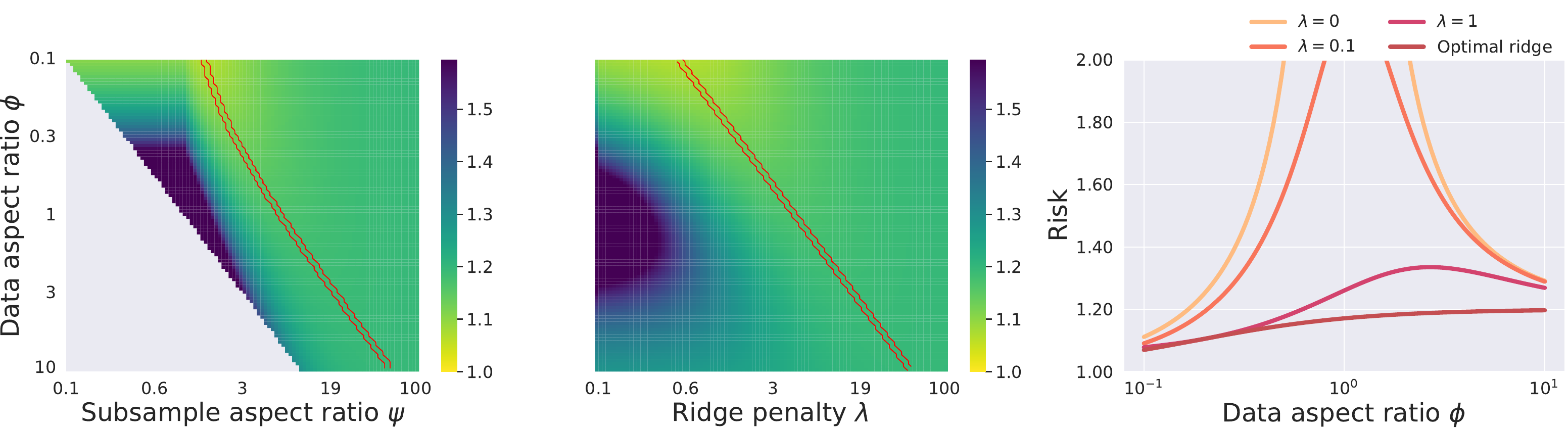}
    \caption{
    \textbf{Ridge optimal risk monotonicity and suboptimal risk non-monotonicity.}
    Optimal ridge has a monotonic risk in the data aspect ratio, but the risk for fixed ridge penalty $\lambda$ may not be monotonic in the data aspect ratio.
    The data model has $\bSigma=\bSigma_{\mathrm{ar1}}$ with $\rhoar=0.25$ (the covariance matrix of an auto-regressive process of order 1 (AR(1)) is given by $\bSigma_{\mathrm{ar1}}$, where $(\bSigma_{\mathrm{ar1}})_{ij} = \rhoar^{|i-j|}$ for some parameter $\rhoar\in(0,1)$), $\bbeta$ being the leading eigenvector of $\bSigma$ and $\sigma = 0.5$.
    The leftmost panel shows the limiting risk of the full-ensemble ridgeless regression at various data and subsample aspect ratios $(\phi, \psi)$.
    The middle panel shows the limiting risk of the ridge predictor (on the full data) at various data aspect ratios and regularization penalties $(\phi, \lambda)$.
    We highlight the optimal risks at a given data aspect ratio for the leftmost and middle panels using slender red lines.
    Observe that the optimal risk in both cases is increasing as a function of $\phi$.
    }
    \label{fig:ridge-monotonicity}
\end{figure*}

\subsubsection{Subsampled lassoless versus full lasso}

\begin{figure*}[!ht]
    \centering
    \includegraphics[width=0.99\textwidth]{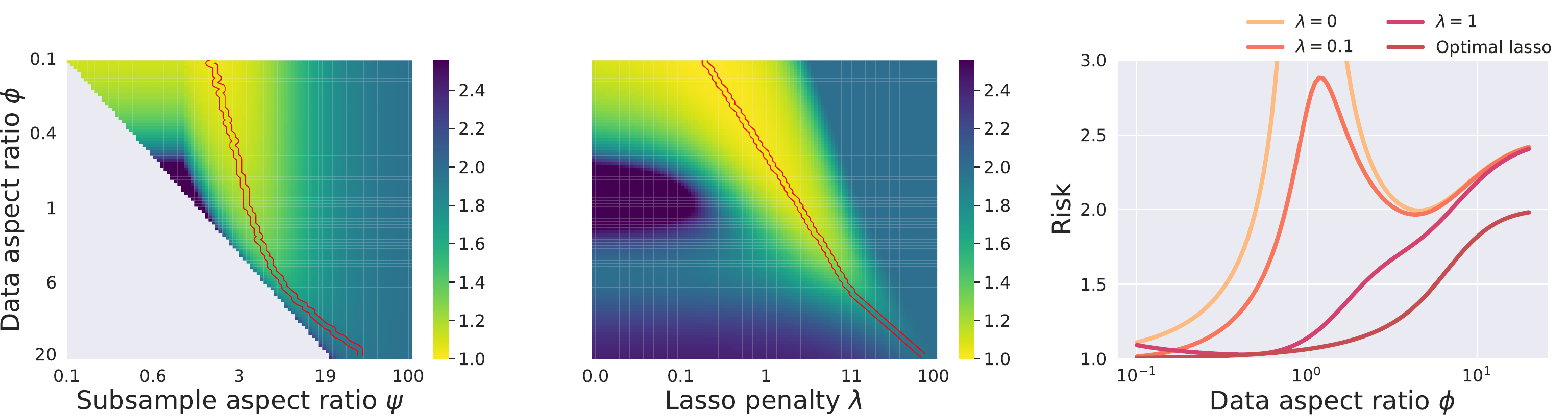}
    \caption{
    \textbf{Lasso optimal risk monotonicity and suboptimal risk non-monotonicity.}
    Similar to ridge regression, optimal lasso has a monotonic risk in the over-parameterization ratio, but the risk for fixed lasso penalty $\lambda$ may not be monotonic in the data aspect ratio.
    The data model has $\bSigma=\bI$ and $\beta_j\overset{\iid}{\sim} \epsilon P_{1/\sqrt{\phi\epsilon}} + (1-\epsilon)P_{0}$ where the sparsity level is $\epsilon=0.01$ and $\sigma^2=1$, such that $\SNR=1$.
    As for ridge regression in \Cref{fig:ridge-monotonicity}, optimal risks for each data aspect ratio are highlighted using slender red lines in the left and middle panels.
    Similarly to the ridge curves in \Cref{fig:ridge-monotonicity}, observe that the optimal risk in both cases is increasing as a function of $\phi$.
    }
    \label{fig:lasso-monotonicity}
\end{figure*}

\end{document}